\documentclass[11pt]{article}
\pdfoutput=1
\usepackage[margin=1in]{geometry}
\usepackage{xcolor}
\usepackage[hypertexnames=false]{hyperref}
\hypersetup{
    colorlinks,
    linkcolor={red!50!black},
    citecolor={blue!50!black},
    urlcolor={blue!80!black}
}
\usepackage{ifthen}
\usepackage{amsthm}
\usepackage{amsmath,amssymb,amsfonts,bbm,dsfont}
\usepackage{mathtools}
\usepackage{array}
\usepackage{caption}
\usepackage{censor}
\usepackage{booktabs}
\usepackage{nicefrac}      
\usepackage{microtype}
\usepackage{afterpage}
\usepackage{inconsolata}

\usepackage[utf8]{inputenc} 
\usepackage[T1]{fontenc}    
\usepackage{booktabs}       
\usepackage{nicefrac}       
\usepackage{microtype}      

\usepackage{xcolor}
\usepackage{placeins}
\usepackage{pgfplots}
\pgfplotsset{compat=1.18}
\usepackage{tikz}
\usetikzlibrary{calc}
\usetikzlibrary{patterns,patterns.meta}
\pgfdeclarepatternformonly{south east lines}{\pgfqpoint{-0pt}{-0pt}}{\pgfqpoint{3pt}{3pt}}{\pgfqpoint{3pt}{3pt}}{
    \pgfsetlinewidth{0.4pt}
    \pgfpathmoveto{\pgfqpoint{0pt}{3pt}}
    \pgfpathlineto{\pgfqpoint{3pt}{0pt}}
    \pgfpathmoveto{\pgfqpoint{.2pt}{-.2pt}}
    \pgfpathlineto{\pgfqpoint{-.2pt}{.2pt}}
    \pgfpathmoveto{\pgfqpoint{3.2pt}{2.8pt}}
    \pgfpathlineto{\pgfqpoint{2.8pt}{3.2pt}}
    \pgfusepath{stroke}}
\usetikzlibrary{positioning,cd}

\definecolor{UCnavyy}{rgb}{0.094117, 0.168627, 0.2862745}           
\definecolor{navy}{rgb}{0, 0.415686, 0.588235}  

\definecolor{cf9f9f9}{RGB}{249,249,249}
\definecolor{cb3b3b3}{RGB}{179,179,179}
\definecolor{c808080}{RGB}{128,128,128}
\definecolor{c1a1a1a}{RGB}{26,26,26}
\definecolor{cffffff}{RGB}{255,255,255}

\usepackage{flafter}
\usepackage{comment}
\usepackage{cleveref}

\usepackage{natbib}
\bibliographystyle{plainnat}
\bibpunct{(}{)}{;}{a}{,}{,}

\usepackage{thmtools,thm-restate}
\newtheorem{theorem}{Theorem}

\newtheorem{lemma}{Lemma}

\newtheorem{example}{Example}
\newtheorem{proposition}{Proposition}

\newtheorem*{assumption*}{Assumption}
\newtheorem{assumption}{Assumption}

\newtheorem{definition}{Definition}
\newtheorem{remark}{Remark}

\usepackage{apptools}
\AtAppendix{\counterwithin{theorem}{section}}
\AtAppendix{\counterwithin{conjecture}{section}}
\AtAppendix{\counterwithin{exercise}{section}}
\AtAppendix{\counterwithin{lemma}{section}}
\AtAppendix{\counterwithin{fact}{section}}
\AtAppendix{\counterwithin{claim}{section}}
\AtAppendix{\counterwithin{example}{section}}
\AtAppendix{\counterwithin{proposition}{section}}
\AtAppendix{\counterwithin{corollary}{section}}
\AtAppendix{\counterwithin{problem}{section}}
\AtAppendix{\counterwithin{assumption}{section}}

\AtAppendix{\counterwithin{definition}{section}}
\AtAppendix{\counterwithin{remark}{section}}
\AtAppendix{\counterwithin{observation}{section}}

\AtAppendix{\counterwithin{exercise}{section}}



\renewcommand{\epsilon}{\varepsilon}

\DeclareMathOperator*{\argmin}{\mathrm{arg\,min}}

\DeclareMathOperator*{\minimize}{\mathrm{minimize}}

\newcommand{\norm}[1]{\lVert#1\rVert}

\newcommand\numberthis{\addtocounter{equation}{1}\tag{\theequation}}  

\usepackage{mdframed}
\newmdenv[
  topline=false,
  bottomline=false,
  rightline=false,
  skipabove=\topsep,
  skipbelow=\topsep,
  innertopmargin=0pt,
  innerbottommargin=0pt
]{siderules}

\usepackage{ragged2e}

\usepackage{algorithm}
\usepackage{algorithmic}

\title{Optimization on Pareto sets: \\ On a theory of multi-objective optimization}

\author{%
  Abhishek Roy* \\ \texttt{a2roy@ucsd.edu} 
  \and
  Geelon So* \\
  \texttt{geelon@ucsd.edu} \\
  University of California, San Diego\\
  La Jolla, CA 92093
  \and 
  Yi-An Ma\\
  \texttt{yianma@ucsd.edu}
}

\begin{document}
\maketitle

\begin{abstract}
In multi-objective optimization, a single decision vector must balance the trade-offs between many objectives. Solutions achieving an optimal trade-off are said to be Pareto optimal---these are decision vectors for which improving any one objective must come at a cost to another. But as the set of Pareto optimal vectors can be very large, we further consider a more practically significant \emph{Pareto-constrained optimization problem}, where the goal is to optimize a preference function constrained to the Pareto set. 

We investigate local methods for solving this constrained optimization problem, which poses significant challenges because the constraint set is (i) implicitly defined, and (ii) generally non-convex and non-smooth, even when the objectives are. We define notions of optimality and stationarity, and provide an algorithm with a last-iterate convergence rate of $O( K^{-1/2})$ to stationarity when the objectives are strongly convex and Lipschitz smooth. 
\end{abstract}

\section{Introduction}
The theory of optimization has provided the foundations for analyzing large-scale machine learning, giving us a language for understanding not only training accuracy, but also generalization~\citep{hardt2022patterns} and adaptive decision making~\citep{Puterman94MDP}. However, in practice, we often need to further account for additional desiderata: resource constraints, fairness, fine-tunability, and so on. As a result, \emph{multi-objective optimization} (MOO) has increasingly drawn interest from the machine learning community, since it naturally generalizes the single objective paradigm of classical learning while also being able to attend to these additional requirements. 

Examples of machine learning settings formulated as MOO problems include those with multiple tasks~\citep{sener2018multi,doersch17multi}, different data distributions~\citep{dong2015multi,huang2015rapid}, fairness requirements~\citep{martinez20minimax,la2023optimizing,kamani2021pareto}, inverse reinforcement learning~\citep{pirotta16inverse}, and the need to balance compute and power consumption among multiple algorithmic modules~\citep{ghosh13towards}. 

The solutions to MOO problems are those that achieve optimal trade-offs, or \emph{Pareto optimality}; together, they form the \emph{Pareto set}. But because the Pareto set generally does not contain a single solution, there is a need to make a further selection from the Pareto optimal solutions. Currently, there are two main approaches to making this selection. The first is to find a representative subsample of the Pareto set: this reduces the number of solutions that need to be inspected before making a final decision~\citep{lin19pareto,liu21profiling,kobayashi2019bezier,guerreiro21hypervolume}. The other approach is to scalarize the multiple objectives into a single objective, say, by taking a linear combinations of the objectives~\citep{mahapatra2020multi}.

However, as the number of objectives and dimensions increase, the Pareto set can become extremely large, forcing the size of a representative subsample to also become untenably large. Furthermore, the geometry of the Pareto set can be quite complicated, with ``needle-like extensions'' and ``knees'' \citep{kulkarni2022regularities}, which poses difficulties for sampling. Even with quadratic objectives in two dimensions, we can observe singularities in the Pareto set, see \cite{sheftel2013geometry} or \Cref{fig:hard-pareto}. The other scalarization approach is also not without difficulties. As the objective functions can be incomparable, it can be challenging to find a meaningful weighting of the objectives.

For a more principled selection, we assume that we have an additional \emph{preference function} $f_0$, which we aim to optimize constrained to the Pareto set. In supervised learning tasks, this preference function is oftentimes the loss function of a generic dataset. In economic and decision making problems, it is usually taken to be the social welfare of the entire community of users. This approach has been considered in various contexts such as portfolio management \citep{thach1996dual} and manufacturing planning \citep{yamamoto2002optimization}, in addition to machine learning and optimization~\citep{ye2022pareto}. While heuristics have been proposed, little is known about the convergence properties of these algorithms. This prompts us to ask:

\begin{center}
{\it Given a set of objective functions $(f_1,\dots,f_n)$ and a preference function $f_0$, what is a suitable approximate solution concept and what are efficient algorithms to achieve it?}
\end{center}

\subsection{Main results}
In MOO, we are given a set of $n$ objective functions $F \equiv (f_1,\ldots, f_n) : \mathbb{R}^d \to \mathbb{R}^n$ that are jointly optimized over a shared decision space $\mathbb{R}^d$:
\begin{equation} \label{eqn:moo}
\minimize_{x \in \mathbb{R}^d}\quad F(x).
\end{equation}
The solution concept for (\ref{eqn:moo}) is typically defined as the set of Pareto optimal solutions, $\mathrm{Pareto}(F)$, which consists of decision vectors $x \in \mathbb{R}^d$ that make an optimal trade-off between objectives. And to further decide which trade-off to make, we consider the \emph{Pareto-constrained optimization problem}, in which the aim is to optimize a preference function $f_0 : \mathbb{R}^d \to \mathbb{R}$ constrained to the Pareto set of $F$:
\begin{equation}  \label{eqn:pso}
\minimize_{x \in \mathrm{Pareto}(F)}\quad f_0(x).
\end{equation}

\begin{figure}[t]
    \centering
    \captionsetup{width=0.9\textwidth}
    \input{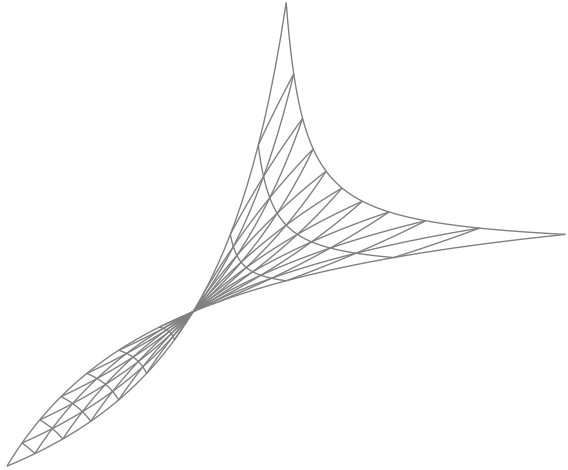}
    \caption{A Pareto set for three positive-definite quadratic objective functions in $\mathbb{R}^2$. 
    The grid lines show the coordinate maps for $x^* : \Delta^{2} \to \mathrm{Pareto}(f_1, f_2, f_3)$, where $\Delta^{2}$ is the $3$-simplex.
    Even in this well-structured setting, the Pareto set is not convex or smooth.}
    \label{fig:hard-pareto}
\end{figure}

This problem is challenging not only because the constraint set is defined implicitly as the solution to the MOO problem from \Cref{eqn:moo}, but because it is also non-convex and non-smooth. Even in the case of linear preference functions, the problem is known to be NP-hard \citep{fulop1993equivalence}. In fact, it is not obvious how to even define an appropriate relaxation of the problem such as stationarity that can be attained through optimization, given the challenges of non-convex non-smooth optimization \citep{zhang2020complexity,kornowski2021oracle,li2020understanding,jordan2023deterministic}. However, we show in this work that the Pareto set has additional geometry when the objectives are strongly convex that allows us to relax the Pareto-constrained optimization problem to a strong notion of stationarity that is necessary for optimality and that can be efficiently attained:
\begin{enumerate}
    \item We show that the Pareto-constrained optimization problem has an equivalent reformulation as a smooth optimization problem over a linear constraint set (\Cref{prop:lifted-pareto-set}). This allows us to introduce solution concepts such as (approximate) \emph{preference stationarity} in the standard way. Furthermore, we show that the solution concepts are geometrically meaningful (\Cref{prop:approx-stationary}).
    \item While the reformulation solves the issue of non-convexity and non-smoothness, the reformulated objective function remains implicit, which can make it hard to design optimization methods and provide simple analyses. If the objectives and preference are sufficiently smooth (Assumptions~\ref{ass:objectives}--\ref{ass:f0}), we construct a family of upper bounds for the reformulated objective function (\Cref{prop:maj-sur}), providing a general framework to analyze iterative gradient-based methods.
    \item We provide the \emph{Pareto majorization-minimization} algorithm, which iteratively (i) computes these upper bounds and (ii) minimizes them. In our setting, this amounts to solving a sequence of (i) unconstrained strongly-convex optimization problems and (ii) quadratic programs. We show that it suffices to solve the strongly convex programs up to $O(\epsilon_0^2)$-optimality and the quadratic programs up to $O(\epsilon_0)$-stationarity. Then, no more than $O(\epsilon_0^{-2})$ rounds of optimization are needed to attain an $\epsilon_0$-approximate preference stationary solution (\Cref{thm:pmm-convergence}).
\end{enumerate}

\subsection{Related work}
Selecting a single decision out of all Pareto optimal decisions is a fundamental problem of MOO that does not appear in the classical single-objective setting; in MOO, there is no canonical total ordering of the solutions \citep{miettinen1999nonlinear}. Broadly, the approaches to making such a selection can be categorized as \emph{a~priori} and \emph{a~posteriori} \citep{hwang2012multiple}.\footnote{They also include two other categories: the \emph{no-preference} and \emph{interactive} approach. In the former, any Pareto optimal decision will do, while in the latter, candidates are presented adaptively to an interactive decision maker.} 

In the \emph{a~priori} setting, the preferences of the decision maker is known beforehand. While in the \emph{a~posteriori} approach, the goal is to present a decision maker with a representative spread of Pareto optimal options, from which the decision maker will make a final decision. But because the Pareto set can become very high-dimensional, the \emph{a posteriori} approach becomes less viable (or needs to become more interactive) as the number of objectives and dimensions increase.

Instead, we work in the \emph{a priori} setting and consider optimization constrained to the Pareto set, also sometimes called \emph{semivectorial bilevel optimization} or \emph{optimization on efficient sets}, which can be considered an instantiation of bi-level optimization \citep{yamamoto2002optimization,bonnel2006semivectorial,dempe2018bilevel}. This problem is known to be NP-hard \citep{fulop1993equivalence} and algorithms for solving this problem tend to focus on settings with: (i) linear preference functions \citep{philip1972algorithms,benson1984optimization,liu2018primal}, (ii) linear objectives \citep{dauer1991optimization,bolintineanu1993minimization,tao1996numerical,yamamoto2002optimization}, or (iii) specific choices of preference functions such as the Tchebycheff weighting function \citep{steuer1989tchebycheff}. To our knowledge, the only other algorithmic work that considers the general problem with nonlinear objectives is \cite{ye2022pareto}. 

However, the stationary condition introduced by \cite{ye2022pareto}, defined as stationarity with respect to the proposed optimization dynamics, does not have a clear connection to preference optimality. In fact, as it is a non-trivial first-order stationary condition, the stationarity notion defined therein is not a necessary condition (\Cref{prop:impossibility}); there are settings where such dynamics avoid optimal points (see \Cref{ex:png}).

We are able to introduce a simple and necessary condition for preference optimality by making use of the manifold structure of the Pareto set. While its smooth structure has previously been recognized \citep{hillermeier2001generalized,hamada2020topology}, the prior focus has been on the extrinsic Pareto manifold within an ambient space, from which it inherits its smoothness. We take a different approach and work with the Pareto manifold intrinsically. Since it is diffeomorphic to the simplex, conceptually, this greatly simplifies optimization constrained to the Pareto set. And in order to overcome the implicit nature of the Pareto manifold, we use ideas from majorization-minimization  and trust-region approaches to optimization, where approximate gradient information can be used to make provable improvements \citep{lange2000optimization,marumo2023majorization}.

\section{Preliminaries} \label{sec:preliminaries}
Let $f_1, \ldots, f_n : \mathbb{R}^d \to \mathbb{R}$ be objective functions, $f_0 : \mathbb{R}^d \to \mathbb{R}$ be a preference function. We assume:
\begin{enumerate}
    \item[(\ref{ass:objectives})] The objectives are strongly convex and twice-differentiable with Lipschitz-continuous gradients. 
    \item[(\ref{ass:hesssmooth})] The objectives have Lipschitz-continuous Hessians.
    \item[(\ref{ass:f0})] The preference has Lipschitz-continuous gradients.
\end{enumerate}
Let $[n] := \{1,\ldots, n\}$. We denote the $(n-1)$-simplex by $\Delta^{n-1}$, which is the set of convex weights:
\[\Delta^{n-1} := \bigg\{\beta \in \mathbb{R}^n : \sum_{i \in [n]} \beta_i = 1 \textrm{ and } \forall i \in [n],\ \beta_i \geq 0\bigg\}.\]
And given a convex weight $\beta \in \Delta^{n-1}$, we let $f_\beta$ denote the \emph{scalarization}:
\begin{equation} \label{eqn:scalarization}
    f_\beta(x) := \sum_{i \in [n]} \beta_i f_i(x).
\end{equation}
For a detailed glossary, see \Cref{sec:notation}.

Let us recall the definition of a Pareto optimal decision vector.
\begin{definition}[Pareto optimality] \label{def:pareto-optimality}
Given objectives $f_1,\ldots, f_n$, we say that a decision vector $x \in \mathbb{R}^d$ is \emph{Pareto optimal} if for all $x' \in \mathbb{R}^d$:
\[\qquad f_i(x') < f_i(x) \qquad \implies \qquad \exists j \quad \mathrm{s.t.}\quad  f_j(x') > f_j(x).\]
We call the set of Pareto optimal decision vectors the \emph{Pareto set} of $f_1,\ldots, f_n$, denoted $\mathrm{Pareto}(F)$.
\end{definition}

In words, the above condition states that there is no way to improve $f_i$ without also worsening some other $f_j$. When the objectives are smooth, a related local condition is Pareto stationarity:

\begin{definition}[Pareto stationarity] \label{def:pareto-stationarity}
Given objectives $f_1,\ldots, f_n$, we say that a decision vector $x \in \mathbb{R}^d$ is \emph{Pareto stationary} if zero is a convex combination of the gradients:
\[\phantom{\qquad \textrm{for some }} \nabla f_\beta(x) = 0, \qquad \textrm{for some } \beta \in \Delta^{n-1},\]
where $f_\beta$ is defined by \Cref{eqn:scalarization}.
\end{definition}

In particular, Pareto stationarity is a necessary condition for Pareto optimality \citep{marucsciac1982fritz}. Furthermore, it is sufficient when the objectives are twice-differentiable and are strictly convex \citep{fliege2009newton}. As we have assumed this, we have:
\[x \in \mathrm{Pareto}(F) \quad\Longleftrightarrow\quad x \textrm{ is Pareto stationary}.\]

\section{The Pareto manifold}
It is not immediately evident from the definition of Pareto stationarity that $\mathrm{Pareto}(F)$ is amenable to the Pareto-constrained optimization problem defined in \Cref{eqn:pso}. In general, the Pareto set is non-smooth and non-convex. Even when the objectives are positive-definite quadratics, the Pareto set can have singularities \citep{sheftel2013geometry}. For example, see the Pareto set in \Cref{fig:hard-pareto}. 

This issue of non-smoothness arises because the set of Pareto stationary points naturally lives in a higher-dimensional space $\mathbb{R}^d \times \Delta^{n-1}$, in which it is a smooth $(n-1)$-dimensional submanifold. But when it is projected back down into $\mathbb{R}^d$, it can cross itself to create singularities. Formally, we define:

\begin{definition}[Pareto manifold] \label{def:pareto-manifold}
The \emph{Pareto manifold} $\mathcal{P}(F) \subset \mathbb{R}^d \times \Delta^{n-1}$ is the zero set:
\[\mathcal{P}(F) = \big\{(x,\beta) : \nabla f_\beta(x) = 0\big\}.\]
\end{definition}

The Pareto manifold consists of all $(x,\beta)$ such that $x$ is Pareto stationary and $\beta$ bears witness to the stationarity condition $\nabla f_\beta(x) = 0$. And of course, we can recover the Pareto set from the Pareto manifold simply by projecting down to its first component in $\mathbb{R}^d$:
\[\phantom{\textrm{ for some }\beta \in \Delta^{n-1}}x \in \mathrm{Pareto}(F) \quad \Longleftrightarrow \quad (x,\beta) \in \mathcal{P}(F) \textrm{ for some }\beta \in \Delta^{n-1}.\]
But this projection can also collapse any smoothness structure that $\mathcal{P}(F)$ has. And indeed, it is a smooth submanifold of $ \mathbb{R}^d \times \Delta^{n-1}$. To see this, notice that $\mathcal{P}(F)$ is the zero set of the map: 
\[(x,\beta) \mapsto \nabla f_\beta(x),\] 
whose partial derivative with respect to $x$ is invertible---the partial derivative is $\nabla^2 f_\beta$, which is positive-definite by strong convexity. The implicit function theorem then yields its manifold structure:

\begin{restatable}[Characterization of the Pareto manifold] {proposition}{paretochar}\label{prop:lifted-pareto-set}
    Define the map $x^* : \Delta^{n-1} \to \mathrm{Pareto}(F)$:
    \begin{equation} \label{eqn:x-best-response}
        x^*(\beta)\equiv x_\beta := \argmin_{x \in \mathbb{R}^d}\, f_\beta(x).
    \end{equation}
    Let $\nabla F(x) \in \mathbb{R}^{n \times d}$ be the Jacobian. Then, the map $x^*$ has derivative:
    \begin{equation} \label{eqn:x-best-response-gradient}
    \nabla x^*(\beta) = -\nabla^2f_\beta(x_\beta)^{-1} \nabla F(x_\beta)^\top,
    \end{equation}
    so that the map $\beta \mapsto (x_\beta,\beta)$ is a diffeomorphism of $\Delta^{n-1}$ with the Pareto manifold $\mathcal{P}(F)$.
\end{restatable}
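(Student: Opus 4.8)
The plan is to build everything on the single fact that strong convexity makes each scalarization uniquely minimizable, and then to read off both the derivative formula and the diffeomorphism from one application of the implicit function theorem to the stationarity equation $G(x,\beta) := \nabla f_\beta(x) = 0$.

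First I would establish that $x^*$ is well-defined and already accounts for all of $\mathcal{P}(F)$. For $\beta \in \Delta^{n-1}$, the scalarization $f_\beta = \sum_i \beta_i f_i$ inherits $\mu$-strong convexity from the objectives (the weights are nonnegative and sum to one, so $\nabla^2 f_\beta = \sum_i \beta_i \nabla^2 f_i \succeq \mu I$), hence $f_\beta$ is coercive and has a unique minimizer $x_\beta$, characterized by the first-order condition $\nabla f_\beta(x_\beta) = 0$. This simultaneously shows $(x_\beta,\beta) \in \mathcal{P}(F)$ and, by uniqueness of the minimizer, that every $(x,\beta) \in \mathcal{P}(F)$ satisfies $x = x_\beta$. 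Thus $\beta \mapsto (x_\beta,\beta)$ is a bijection onto $\mathcal{P}(F)$, and its set-theoretic inverse is the projection $(x,\beta) \mapsto \beta$.

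Next I would obtain smoothness and the derivative via the implicit function theorem. To handle the boundary of the simplex, where some $\beta_i = 0$ and the domain is not open, I would first extend the weight domain to an open neighborhood $V \supset \Delta^{n-1}$ in $\mathbb{R}^n$ on which $f_\beta$ stays strongly convex uniformly in $x$; this is possible because the two-sided Hessian bounds $\mu I \preceq \nabla^2 f_i \preceq L I$ give $\nabla^2 f_\beta \succeq (\mu\norm{\beta^+}_1 - L\norm{\beta^-}_1)\,I$, with $\beta^+,\beta^-$ the positive and negative parts of $\beta$, which remains positive-definite for $\beta$ near the compact simplex. On $V$ the map $G(x,\beta) = \nabla f_\beta(x)$ is continuously differentiable with $\partial_x G = \nabla^2 f_\beta$ invertible and $\partial_\beta G = [\nabla f_1(x),\dots,\nabla f_n(x)] = \nabla F(x)^\top$. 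Applying the implicit function theorem to $G = 0$ yields that $\beta \mapsto x_\beta$ is continuously differentiable and that
\[\nabla x^*(\beta) = -(\partial_x G)^{-1}\,\partial_\beta G = -\nabla^2 f_\beta(x_\beta)^{-1}\,\nabla F(x_\beta)^\top,\]
which is the stated formula.

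Finally, I would assemble the diffeomorphism. The map $\phi : \beta \mapsto (x_\beta,\beta)$ is smooth by the previous step, is a bijection onto $\mathcal{P}(F)$ by the first step, and its inverse is the restriction of the linear projection $(x,\beta) \mapsto \beta$, which is smooth; a smooth bijection with smooth inverse is a diffeomorphism. I expect the only genuinely delicate point to be the boundary of $\Delta^{n-1}$: since the simplex is a manifold with corners rather than an open set, the implicit function theorem must be applied on the open extension $V$ and then restricted, and one must verify that a strong-convexity-preserving neighborhood exists \emph{uniformly in} $x$ — which is exactly what the upper Hessian bound $\nabla^2 f_i \preceq L I$ provided by the smoothness assumption buys. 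The derivative computation itself is routine once this setup is in place.
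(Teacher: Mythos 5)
Your proof is correct and takes essentially the same route as the paper's: well-definedness of $x^*$ from strong convexity, the implicit function theorem applied to the stationarity map $(x,\beta) \mapsto \nabla f_\beta(x)$ with partial derivatives $\nabla^2 f_\beta(x)$ and $\nabla F(x)^\top$, and the projection $(x,\beta) \mapsto \beta$ serving as the smooth inverse of $\beta \mapsto (x_\beta,\beta)$. Your explicit extension of the weight domain to an open neighborhood of $\Delta^{n-1}$ (to legitimize the IFT at boundary points of the simplex) is a technical refinement that the paper glosses over---the paper simply notes that the map is continuously differentiable and its $x$-partial is invertible, implicitly using that $\nabla f_\beta(x)$ is defined for all $\beta \in \mathbb{R}^n$---but this does not change the substance of the argument.
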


Thus, one natural set of coordinates for the Pareto manifold is its parametrization by the simplex. This allows us to define an equivalent but smooth formulation of the Pareto-constrained optimization problem obtained by pulling $f_0$ back onto $\Delta^{n-1}$, which we shall now do.

\section{The Pareto-constrained optimization problem} \label{sec:pcop}
The Pareto-constrained optimization problem defined in \Cref{eqn:pso} has another formulation:
\begin{equation} \label{eqn:lifted-preference-optimization}
    \qquad \minimize_{(x,\beta) \in \mathcal{P}(F)}\ f_0(x),
\end{equation}
where the constraint has been replaced with the Pareto manifold. The two are equivalent because $\mathrm{Pareto}(F)$ is precisely the projection of $\mathcal{P}(F)$ onto $\mathbb{R}^d$. But the reformulation allows us to apply \Cref{prop:lifted-pareto-set} to pullback the optimization problem onto $\Delta^{n-1}$:
\begin{equation} \label{eqn:pullback-preference-optimization}
    \qquad \minimize_{\beta \in \Delta^{n-1}}\ (f_0 \circ x^*)(\beta),
\end{equation}
which is now a smooth optimization problem over the simplex. We say that $x$ is \emph{preference optimal} if it is a solution to (\ref{eqn:lifted-preference-optimization}); if $\beta$ solves (\ref{eqn:pullback-preference-optimization}), then correspondingly, $x^*(\beta)$ is preference optimal. 

As $f_0$ and $x^*$ are smooth, so too is their composition $(f_0 \circ x^*)$; we can define a stationarity condition in the standard way for smooth objectives on convex sets \citep{nesterov2003introductory}.  We say that $x$ is \emph{weakly preference stationary} if there is some $\beta$ such that $(x,\beta) \in \mathcal{P}(F)$ and $\beta$ is stationary in the usual sense for (\ref{eqn:pullback-preference-optimization}). For any given $x$, many $\beta$'s could satisfy the condition $(x,\beta) \in \mathcal{P}(F)$,
\begin{equation} \label{eqn:delta(x)}
    \Delta^{n-1}(x) := \big\{\beta \in \Delta^{n-1} : \nabla f_\beta(x) = 0\big\}.
\end{equation}
We say that $x$ is \emph{preference stationary} if the stationary condition holds for all such $(x,\beta)$'s.

\begin{restatable}[Preference stationarity]{definition}{prefstat} \label{def:preference-stationarity}
    We say that a point $x \in \mathrm{Pareto}(F)$ is \emph{weakly preference stationary} if there exists some $\beta \in \Delta^{n-1}(x)$ such that:\footnote{As $\nabla F(x_\beta)^\top \beta = \nabla f_\beta(x_\beta) = 0$, \Cref{eqn:lifted-pref-stationarity} can be simplified to $-\nabla (f_0\circ x^*)(\beta)^\top \beta' \leq 0$, for all $\beta' \in \Delta^{n-1}$.}
    \begin{equation} \label{eqn:lifted-pref-stationarity}
    \phantom{ \qquad \forall v \in T_{\Delta^{n-1}}(\beta)}-\nabla (f_0\circ x^*)(\beta)^\top (\beta' - \beta) \leq 0, \qquad \forall \beta' \in \Delta^{n-1},
    \end{equation}
    where \Cref{eqn:x-best-response-gradient} gives $\nabla x^*$. If (\ref{eqn:lifted-pref-stationarity}) holds for all $\beta \in \Delta^{n-1}(x)$, then $x$ is \emph{preference stationary}.
\end{restatable}

From optimization on convex sets \cite[Lemma 3.1.19]{nesterov2003introductory}, we immediately have:

\begin{proposition}[Necessary condition]
    Preference optimality implies (weak) preference stationarity.
\end{proposition}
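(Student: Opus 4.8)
The plan is to transport the problem through the diffeomorphism of \Cref{prop:lifted-pareto-set} and then invoke the standard first-order necessary condition for minimizing a smooth function over a convex set. Writing $g := f_0 \circ x^*$, the pullback (\ref{eqn:pullback-preference-optimization}) asks us to minimize the smooth function $g$ over the convex simplex $\Delta^{n-1}$; this is exactly the setting of \cite[Lemma 3.1.19]{nesterov2003introductory}. So the proof reduces to exhibiting a minimizer $\beta$ of $g$ with $(x,\beta)\in\mathcal{P}(F)$, at which point the inequality (\ref{eqn:lifted-pref-stationarity}) is immediate.

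First I would translate preference optimality into the $\beta$-coordinates. Suppose $x$ solves (\ref{eqn:lifted-preference-optimization}). Since $x \in \mathrm{Pareto}(F)$, the set $\Delta^{n-1}(x)$ is nonempty, and because $f_\beta$ is strongly convex its minimizer is pinned down by $\nabla f_\beta = 0$; hence every $\beta \in \Delta^{n-1}(x)$ satisfies $x^*(\beta) = x$ and so $g(\beta) = f_0(x)$. For any competitor $\beta' \in \Delta^{n-1}$, the point $x^*(\beta')$ lies in $\mathrm{Pareto}(F)$, so optimality of $x$ gives $g(\beta') = f_0(x^*(\beta')) \geq f_0(x) = g(\beta)$. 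Thus any such $\beta$ is a global minimizer of $g$ over $\Delta^{n-1}$.

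Next I would apply the first-order condition. Because $f_0$ has Lipschitz gradients and $x^*$ is smooth (by \Cref{prop:lifted-pareto-set}), the composite $g$ is continuously differentiable; as $\Delta^{n-1}$ is convex and $\beta$ is a minimizer, \cite[Lemma 3.1.19]{nesterov2003introductory} yields $\nabla g(\beta)^\top(\beta' - \beta) \geq 0$ for all $\beta' \in \Delta^{n-1}$. Multiplying by $-1$ and flipping the inequality recovers exactly (\ref{eqn:lifted-pref-stationarity}), so $x$ is weakly preference stationary.

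I do not expect a genuine obstacle here: the substantive content lives entirely in \Cref{prop:lifted-pareto-set}, which supplies both the equivalence of the two formulations and the smoothness of $x^*$. The only point demanding care is the bookkeeping between the $x$- and $\beta$-coordinate systems, and in particular the observation that preference optimality forces \emph{every} $\beta \in \Delta^{n-1}(x)$---not merely one---to attain the common value $f_0(x)$ and hence to be a global minimizer of $g$. The same argument then applies verbatim at each such $\beta$, so the reasoning actually delivers the stronger conclusion that $x$ is preference stationary; this is consistent with the parenthetical ``(weak)'' appearing in the statement.
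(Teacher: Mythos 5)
Your proof is correct and takes essentially the same route as the paper, whose entire proof is the observation that, after pulling the problem back to the simplex via \Cref{prop:lifted-pareto-set}, the claim is immediate from the first-order necessary condition for smooth minimization over a convex set \citep[Lemma 3.1.19]{nesterov2003introductory}. Your bookkeeping---that every $\beta \in \Delta^{n-1}(x)$ satisfies $x^*(\beta) = x$ and is therefore a global minimizer of $f_0 \circ x^*$, so the argument in fact yields full (not merely weak) preference stationarity---correctly fills in the details the paper leaves implicit.
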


\begin{figure}[t]
    \centering
    \captionsetup{width=0.9\textwidth}
    \input{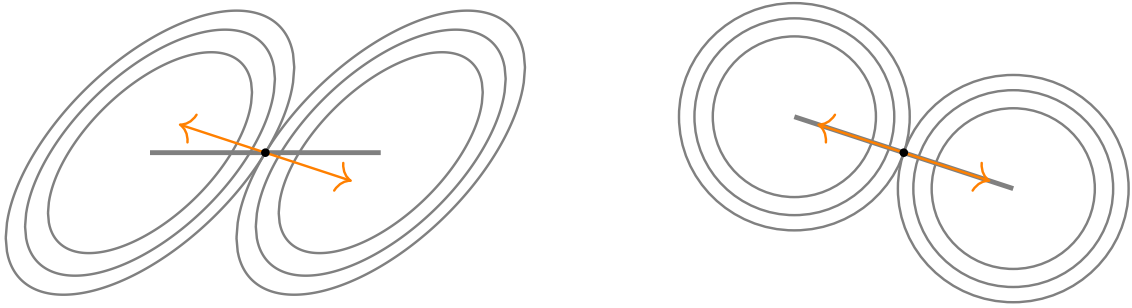}
    \caption{Two instances of $\mathrm{Pareto}(f_1, f_2)$ are shown (thick gray lines), where $f_1$ and $f_2$ are positive-definite quadratic objectives in $\mathbb{R}^2$ (visualized by contour lines). At $x$ (the black dot), the two instances  share the same local information $-\nabla f_1(x)$ and $- \nabla f_2(x)$ (orange arrows); they cross the contour lines at right angles. When $n = 2$, the Pareto set contains all $z$ such that $\nabla f_1(z) = -\lambda \nabla f_2(z)$ for $\lambda \geq 0$. Notice that if $f_0$ is strictly convex and $x$ does not minimize $f_0$ over $\mathbb{R}^2$, then $x$ cannot be stationary for both instances.}
    \label{fig:quadratics}
\end{figure}

While this definition of preference stationarity is appealing because it is necessary for preference optimality and because it is well-founded in standard optimization theory, it is not necessarily the only reasonable relaxation of preference optimality. For example, our notion of preference stationarity requires second-order information in $F$ for the term $\nabla^2 f_\beta(x)^{-1}$. It is natural to ask whether we could define stationarity with reference to only first-order information. It turns out that this is impossible, if we require the stationarity condition to be (i) non-trivial, (ii) necessary for preference optimality and (iii) decidable from local information at a single point $x$.

The reason is that the local behavior of the Pareto set about a point $x$ cannot be determined from $\nabla F(x)$ alone. \Cref{fig:quadratics} shows two different Pareto sets that share the same first-order information at a point $x$. But the preference stationarity of $x$ with respect to $f_0$ also depends on its neighboring Pareto points. So to attain a non-trivial and necessary condition, we would either need to look at higher-order information or more than a one point. To formalize this, first define:

\begin{definition}[Preference genericity]
    Let $\{v_0, v_1,\ldots, v_n\} \subset \mathbb{R}^d$ where $1 < n \leq d$. We say that this set is \emph{preference generic} if there is a unique $\beta \in \Delta^{n-1}$ such that $\beta_1 v_1 + \dotsm \beta_n v_n = 0$, and:
    \[v_0 \notin \mathrm{span}(v_1,\ldots, v_n).\]
\end{definition}

We also formalize stationarity conditions as \emph{decision functions}, which are functions mapping continuous inputs to Boolean variables taking values of \texttt{true} or \texttt{false}.

\begin{definition}[Stationarity function]
    A \emph{first-order stationary condition} is a decision function:
\[\mathrm{Stationary} : \mathbb{R}^d \overset{n + 1 \textrm{ times}}{ \times \ \ \dotsm\ \ \times }  \mathbb{R}^d \to \{\mathsf{true}, \mathsf{false}\}.\]
Let $f_0$ be a smooth preference function and $f_1,\ldots, f_n$ be smooth, strongly convex objectives. We say that a first-order condition is \emph{necessary} if the following holds:
\[x \textrm{ is preference optimal} \quad\implies \quad \mathrm{Stationary}\big(\nabla f_0(x), \ldots, \nabla f_n(x)\big) = \mathsf{true}.\]
\end{definition}

\begin{restatable}[Necessary first-order conditions are trivial]{proposition}{firstorder}\label{prop:impossibility} 
Suppose that $\mathrm{Stationary}$ is necessary. Then, it is trivial in the following sense: for any preference generic set of $v_0,\ldots, v_n \in \mathbb{R}^d$, 
\[\mathrm{Stationary}(v_0,\ldots, v_n) = \mathsf{true}.\]
\end{restatable}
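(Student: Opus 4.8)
The plan is to reduce the triviality claim to a \emph{realizability} statement: it suffices to show that for every preference generic tuple $v_0,\ldots,v_n$ there is a legitimate instance---objectives $f_1,\ldots,f_n$ that are strongly convex, twice-differentiable, and have Lipschitz gradients and Hessians, together with a preference $f_0$ with Lipschitz gradient---and a point $x$ at which $\nabla f_i(x) = v_i$ for all $i \in \{0,\ldots,n\}$ and at which $x$ is preference optimal. Granting this, the assumed necessity of $\mathrm{Stationary}$, applied to this instance, forces $\mathrm{Stationary}(v_0,\ldots,v_n) = \mathrm{Stationary}(\nabla f_0(x),\ldots,\nabla f_n(x)) = \mathsf{true}$; since the tuple was arbitrary, the condition is trivial. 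Note only the \emph{existence} of the witnessing $\beta \in \Delta^{n-1}$ with $\sum_i \beta_i v_i = 0$ is needed, not its uniqueness.

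\textbf{The construction.} After translating, assume $x = 0$, and set $W := \mathrm{span}(v_1,\ldots,v_n)$. Because $\sum_i \beta_i v_i = 0$ for a nonzero $\beta \in \Delta^{n-1}$, the $v_i$ are linearly dependent, so $\dim W \leq n-1 \leq d-1$ and $W^\perp \neq \{0\}$. I would take all objectives to be quadratics with one common positive-definite Hessian $H$ (chosen below), namely $f_i(z) = \tfrac12 z^\top H z + v_i^\top z$, so that $\nabla f_i(0) = v_i$ and each $f_i$ is strongly convex; and take the preference to be linear, $f_0(z) = v_0^\top z$, which is smooth with $\nabla f_0(0) = v_0$. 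With a common Hessian, $\nabla f_\beta(z) = Hz + \sum_i \beta_i v_i$, so $x^*(\beta) = -H^{-1}\sum_i \beta_i v_i$ and
\[\mathrm{Pareto}(F) = -H^{-1}\,\mathrm{conv}(v_1,\ldots,v_n) = \mathrm{conv}\big(-H^{-1}v_1,\ldots,-H^{-1}v_n\big),\]
a \emph{flat} convex polytope lying in the subspace $H^{-1}W$ and passing through $0 = x^*(\beta)$, which by strong convexity is therefore in $\mathrm{Pareto}(F)$.

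\textbf{Making $0$ preference optimal.} The key observation is that $0$ sits in the relative interior of this flat set when $\beta$ is interior, so it cannot be a \emph{strict} constrained minimizer of a function with nonzero gradient; instead I will choose $H$ so that $f_0$ is \emph{constant} on $\mathrm{Pareto}(F)$, whence $0$ is a (weak but global) minimizer, i.e.\ preference optimal. Since $f_0$ is linear, it is constant on the hull iff it agrees at the vertices, and $f_0(-H^{-1}v_i) = -(H^{-1}v_0)^\top v_i$; so it suffices to arrange $H^{-1}v_0 \in W^\perp$. Equivalently, with $u_0 := \mathrm{proj}_{W^\perp} v_0$---nonzero precisely because $v_0 \notin W$---I need a positive-definite $H$ with $H u_0 = v_0$. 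This is exactly where the hypothesis $v_0 \notin \mathrm{span}(v_1,\ldots,v_n)$ is used: it yields $\langle u_0, v_0\rangle = \|u_0\|^2 > 0$, which is the condition under which such an $H$ exists. Concretely I would let $H$ act as a suitable positive-definite $2\times 2$ block on $\mathrm{span}(u_0,v_0)$ sending $u_0 \mapsto v_0$ and as the identity on the orthogonal complement.

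\textbf{Main obstacle.} The gradient and Pareto-set computations for common-Hessian quadratics are routine. The conceptual crux---and the step I expect to require the most care---is recognizing that one should \emph{not} try to make $0$ an isolated boundary optimum (impossible, as $0$ is relative-interior to a flat Pareto set), but rather flatten $f_0$ across the entire Pareto set, and then verifying the small linear-algebra fact that $\langle u_0, v_0\rangle > 0$ is necessary and sufficient for a positive-definite $H$ with $Hu_0 = v_0$. Once $H$ is fixed, all the regularity assumptions are immediate, since the $f_i$ are strongly convex quadratics and $f_0$ is linear.
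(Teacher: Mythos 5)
Your proposal is correct and is essentially the paper's proof: the same realizability reduction, the same common-Hessian strongly convex quadratic objectives (so that $\mathrm{Pareto}(F)$ is the flat polytope $\mathrm{conv}(-H^{-1}v_1,\ldots,-H^{-1}v_n)$ containing $0$), and the same use of preference genericity to choose a positive-definite $H$ --- indeed your condition $H^{-1}v_0 \in \mathrm{span}(v_1,\ldots,v_n)^\perp$ is, by symmetry of $H^{-1}$, exactly the paper's requirement that $v \mapsto -H^{-1}v$ carry $\mathrm{span}(v_1,\ldots,v_n)$ into $\mathrm{span}(v_0)^\perp$, which the paper obtains from its projection-based lemma (\Cref{lem:positive-def-rotation}) and you obtain from an explicit block construction using $u_0^\top v_0 = \|u_0\|^2 > 0$. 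The only real deviation is cosmetic: your linear $f_0$ is made constant on the Pareto set (so $0$ is a weak optimum), whereas the paper's quadratic $f_0(x) = \tfrac{1}{2}\|x + v_0\|^2$ makes $0$ the strict constrained minimizer over the same flat set; note this also refutes your side remark that a relative-interior point ``cannot be a strict constrained minimizer of a function with nonzero gradient,'' though nothing in your argument actually relies on that claim.
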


\section{Pareto majorization-minimization}
Let us now consider how to solve the Pareto-constrained optimization problem:
\begin{equation} 
\minimize_{\beta \in \Delta^{n-1}}\ (f_0 \circ x^*)(\beta).\tag{\ref{eqn:pullback-preference-optimization}}
\end{equation}
As the problem has been reformulated as a smooth optimization problem on the simplex, this seems to open up local methods like gradient descent. But for this, there is a remaining issue that $x^*$ is defined implicitly as the solution of another optimization problem:
\begin{equation}
    x^*(\beta)\equiv x_\beta := \argmin_{x \in \mathbb{R}^d}\, f_\beta(x).\tag{\ref{eqn:x-best-response}}
\end{equation}
Because $x_\beta$ does not generally have a closed form, we also cannot explicitly compute $\nabla x^*(\beta)$, which is required if we wish to compute $\nabla (f_0 \circ x^*)(\beta)$ by the chain rule.

\subsection{Approximating the gradient}
We can, however, approximate the gradient. Define the following estimator, which uses local information $\nabla^2 f_\beta(x)$ and $\nabla F(x)$ at $x$ as a proxy for the corresponding local information at $x_\beta$:
\begin{equation} \label{eqn:dx-approx}
    \widehat{\nabla}x^*(x,\beta) := - \nabla^2 f_\beta(x)^{-1} \nabla F(x)^\top.
\end{equation}
If $F$ has continuous second derivative, then $\widehat{\nabla} x^*(x,\beta)$ approaches $\nabla x^*(\beta)$ as $x$ goes to $x_\beta$; strong convexity implies that $\nabla^2 f_\beta$ has a continuous inverse. And so, there are many reasonable approaches to this problem: it is a smooth optimization problem on a convex set with approximate gradients. For example, we could use the gradient estimate to perform projected gradient descent on the simplex. 

Then, the questions at hand: (a) how valid is the approximation $\widehat{\nabla}x^*(x,\beta)$, and (b) how can an optimization procedure make use of that information? It is certainly not the case that the approximation computed at $(x, \beta)$ for some distant $x$ should be as equally valid as one computed near $(x_\beta, \beta)$. One way we can capture the validity of the estimator $\widehat{\nabla} x^*(x,\beta)$ is by using it to construct a majorizing surrogate function, which is a function that upper bounds $f_0 \circ x^*$:
\begin{definition}[Majorizing surrogate]
    A function $g : \Delta^{n-1} \to \mathbb{R}$ \emph{majorizes} $f_0 \circ x^*$ if:
    \begin{equation} \label{eqn:majorizing}
    f_0(x_{\beta'}) \leq g(\beta'),
\end{equation}
for all $\beta' \in \Delta^{n-1}$. We say that $g$ is a \emph{surrogate} of $f$.
\end{definition}

Intuitively, the better the approximation is, the tighter an upper bound we could provably attain. And as an example, suppose that we have $\widehat{\nabla}x^*(x_\beta, \beta)$, which in this case is exactly $\nabla x^*(\beta)$. And suppose that we knew that $f_0 \circ x^*$ were 1-Lipschitz smooth. Then, the standard quadratic upper bound for Lipschitz smooth functions \citep{nesterov2003introductory} yields a family of majorizing surrogates: 
\[g(\beta'; x_\beta,\beta) = f_0(x_\beta) + \nabla f_0(x_\beta)^\top \widehat{\nabla} x^*(x_\beta,\beta) (\beta' - \beta) + \frac{1}{2} \|\beta' - \beta\|^2.\]
This means that we could use $g$ to bound how much improvement in $f_0$ is made by any iterative optimization scheme that takes a step from $\beta$ to $\beta'$: we can think of $g(\beta';x,\beta)$ as extracting information from $\widehat{\nabla}x^*(x,\beta)$ to certify when an update $f_0(x_{\beta'})$ will improve upon $f_0(x_{\beta})$.

\begin{algorithm}[t]
\caption{Pareto majorization-minimization (PMM)} \label{alg:pareto-mm}
	\textbf{Input:} objectives $F \equiv (f_1,\ldots, f_n)$, preference function $f_0$, and black-box optimizer $\widehat{\argmin}$\\
    \textbf{Initialize:} $(\beta_0, x_0) \in \Delta^{n-1} \times \mathbb{R}^d$
	\begin{algorithmic}[1]
		\FOR{$k=1,\ldots, K$}
        \STATE Compute a majorizing surrogate $g_k(\beta) \equiv g(\beta; x_k,\beta_k)$ satisfying \Cref{eqn:majorizing}
		\STATE Compute approximate minimizers
        \[\displaystyle\beta_{k+1} \leftarrow \widehat{\argmin_{\beta \in \Delta^{n-1}}}\, g_k(\beta)\qquad \textrm{and}\qquad \displaystyle x_{k+1} \leftarrow \widehat{\argmin_{x \in \mathbb{R}^d}}\, f_{\beta_{k+1}}(x).\]
	\ENDFOR
    \RETURN $(\beta_{K+1}, x_{K+1})$
	\end{algorithmic}
\end{algorithm}

\subsection{Algorithms from upper bounds}
Assuming we can obtain such bounds, we can use them not only to analyze optimization procedures, but we can also define a broad class of iterative methods that directly optimize the upper bounds. Suppose that we can compute a family of majorizing surrogates indexed over $\mathbb{R}^d \times \Delta^{n-1}$. Then, the idealized \emph{Pareto majorization-minimization} (PMM) algorithm proceeds in rounds:
\begin{enumerate}
    \item majorization: query $\widehat{\nabla}x^*(x_k,\beta_k)$ to construct a majorizing surrogate $g_k(\beta) \equiv g(\beta; x_k,\beta_k)$,
    \item minimization: make updates $\displaystyle \beta_{k+1} \leftarrow \argmin_{\Delta^{n-1}}\, g_k(\beta)$ and $\displaystyle x_{k+1} \leftarrow \argmin_{x \in \mathbb{R}^d}\,f_{\beta_{k+1}}$.
\end{enumerate}
The majorizing property of $g_k$ ensures that the iterates $f_0(x_{\beta_{k+1}})$ improve as $\beta_{k+1}$ optimizes $g_k$. We also operationalize the intuition that $g(\,\cdot\,;x,\beta)$ becomes more informative as $x$ approaches $x_\beta$ by optimizing $f_{\beta_{k+1}}$. \Cref{alg:pareto-mm} is obtained by relaxing step 2, for we do not need to fully optimize $g_k$ and $f_{\beta_{k+1}}$, and we allow for any black-box optimizer. In theory, any iterative optimization method could be interpreted as an approximate PMM; this yields one framework for convergence analysis.

\section{Approximability from smoothness}  \label{sec:assumptions}
In this section, we quantify the smoothness assumptions presented in \Cref{sec:preliminaries}. From them, we can derive the following implications:
\begin{itemize}
    \item \Cref{ass:objectives} allows us to bound the size of the Pareto set (\Cref{lem:diam}). 
    \item \Cref{ass:hesssmooth} additionally bounds the curvature of the Pareto manifold: we show that $\nabla x^*$ is well-behaved (\Cref{lem:x*-lipschitz}) and is well-approximated by $\widehat{\nabla}x^*$ (\Cref{lem:approx-dx-bound}). 
    \item \Cref{ass:f0} further leads to error bounds when approximating gradient of $f_0 \circ x^*$ (\Cref{lem:appro-gradient-bound}). It also allows us to define a notion of approximate preference stationarity that is geometrically meaningful (\Cref{prop:approx-stationary}) and can be verified using approximate information (\Cref{lem:approx-pref-stationarity-computable}). 
\end{itemize}
Formally, we have:

\begin{assumption} \label{ass:objectives}
Let the objectives $f_1,\ldots, f_n : \mathbb{R}^d \to \mathbb{R}$ be twice differentiable, $\mu$-strongly convex, and have $L$-Lipschitz continuous gradient. That is, for all $i = 1,\ldots, n$,
\begin{align*}
   \mu \mathbf{I} \preceq  \nabla^2f_i(x) \preceq L\mathbf{I}.
\end{align*}
Thus, the condition number of $\nabla^2 f_i$ is upper bounded by $\kappa := L / \mu$. We also let $r$ be a scale parameter, defined by the maximum distance between any of the minimizers of the objectives:
\[r := \max_{i,j \in [n]}\, \big\|\argmin f_i(x) - \argmin f_j(x)\big\|_2.\]
\end{assumption}

\begin{restatable}[Size of Pareto set]{lemma}{lemdiam} \label{lem:diam}
    Suppose $F$ satisfies \Cref{ass:objectives}. Then $R \leq \sqrt{\kappa} r$, where:
    \[R := \mathrm{diam}\big(\mathrm{Pareto}(F)\big) \equiv \sup \big\{ \|x - x'\|_2: x, x' \in \mathrm{Pareto}(F)\big\}.\]
\end{restatable}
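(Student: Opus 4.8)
The plan is to use the simplex parametrization from \Cref{prop:lifted-pareto-set}: every Pareto point is $x_\beta = \argmin_x f_\beta(x)$ for some $\beta \in \Delta^{n-1}$, so it suffices to bound $\|x_\beta - x_{\beta'}\|_2$ uniformly over $\beta,\beta' \in \Delta^{n-1}$. The only quantitative input available at this stage is \Cref{ass:objectives}, which gives, for each $i$, the two-sided quadratic sandwich
\[
\tfrac{\mu}{2}\|z - x_i^*\|_2^2 \;\le\; f_i(z) - f_i(x_i^*) \;\le\; \tfrac{L}{2}\|z - x_i^*\|_2^2, \qquad x_i^* := \argmin f_i,
\]
since $\nabla f_i(x_i^*) = 0$. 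The factor $\sqrt{\kappa}$ in the target should come precisely from playing the lower ($\mu$) bound at one point against the upper ($L$) bound at another, so the whole argument is organized around comparing the scalarized value $f_\beta$ at a Pareto point against its value at the individual minimizers $x_1^*,\ldots,x_n^*$, whose pairwise distances are controlled by $r$.

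The core estimate anchors each Pareto point to the minimizers. Because $x_\beta$ minimizes $f_\beta$, for every reference $z$ we have $f_\beta(x_\beta) \le f_\beta(z)$; expanding both sides with the sandwich and cancelling the common term $\sum_i \beta_i f_i(x_i^*)$ yields
\[
\mu \sum_{i} \beta_i \|x_\beta - x_i^*\|_2^2 \;\le\; L \sum_{i} \beta_i \|z - x_i^*\|_2^2 \qquad \text{for all } z .
\]
Minimizing the right-hand side over $z$ puts $z$ at the weighted centroid $\bar{x}_\beta := \sum_i \beta_i x_i^*$, where $\sum_i \beta_i\|z - x_i^*\|_2^2$ equals the weighted variance $V_\beta := \sum_i \beta_i \|x_i^* - \bar{x}_\beta\|_2^2$. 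Invoking the parallel-axis identity $\sum_i \beta_i\|x_\beta - x_i^*\|_2^2 = \|x_\beta - \bar{x}_\beta\|_2^2 + V_\beta$ then gives $\|x_\beta - \bar{x}_\beta\|_2^2 \le (\kappa - 1) V_\beta$. Finally I would bound the variance by the scale parameter through $V_\beta = \tfrac12 \sum_{i,j}\beta_i\beta_j\|x_i^* - x_j^*\|_2^2 \le \tfrac12 r^2$, so that each Pareto point sits within $\sqrt{(\kappa-1)/2}\,r$ of its own centroid, and every centroid lies in the convex hull of the minimizers, a set of diameter $r$.

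To finish, I would combine these facts over the two endpoints realizing the diameter. The main obstacle is the constant: a naive triangle inequality $\|x_\beta - x_{\beta'}\| \le \|x_\beta - \bar{x}_\beta\| + \|\bar{x}_\beta - \bar{x}_{\beta'}\| + \|\bar{x}_{\beta'} - x_{\beta'}\|$ loses a spurious $\sqrt{2}$ and only yields $R = O(\sqrt{\kappa}\,r)$ with a worse leading constant, because it treats the bulge radius and the centroid separation as independent when they in fact trade off (a large variance $V_\beta$ forces $\beta$ toward the interior, which pins $\bar{x}_\beta$ away from the extreme vertices). I therefore expect the clean bound $R \le \sqrt{\kappa}\,r$ to require comparing the two extremal Pareto points directly: applying $\tfrac{\mu}{2}\|x_\beta - x_{\beta'}\|_2^2 \le f_\beta(x_{\beta'}) - f_\beta(x_\beta)$ from strong convexity of $f_\beta$ at its minimizer, and then controlling $f_\beta(x_{\beta'}) - f_\beta(x_\beta)$ through the sandwich against the minimizer distances, rather than routing through an intermediate center. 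Verifying that this direct comparison collapses to exactly $\kappa r^2$ once the cross terms between $x_\beta$, $x_{\beta'}$ and the $x_i^*$ are accounted for is the step I expect to be the most delicate.
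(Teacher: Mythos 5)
Your core estimate is correct, and it takes a genuinely different route from the paper's. The paper anchors Pareto points to the individual minimizers: from $f_\beta(x_\beta) \le \tfrac{L}{2}r^2$ (evaluating $f_\beta$ at any one minimizer) and a reverse-triangle-inequality lower bound, it shows that any $x$ with $\|x - x_i^*\|_2 > 2\sqrt{\kappa}\,r$ for some $i$ satisfies $f_\beta(x) > \tfrac{L}{2}r^2 \ge f_\beta(x_\beta)$ for every $\beta$, hence cannot be Pareto optimal; this places $\mathrm{Pareto}(F)$ inside a ball of radius $2\sqrt{\kappa}\,r$ around each minimizer and therefore yields $R \le 4\sqrt{\kappa}\,r$ (the paper's last line also has a typo: $2\sqrt{L/\mu}$ should read $2\sqrt{L/\mu}\,r$). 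Your centroid/parallel-axis argument, $\mu\bigl(\|x_\beta - \bar{x}_\beta\|_2^2 + V_\beta\bigr) \le L V_\beta$, giving $\|x_\beta - \bar{x}_\beta\|_2 \le \sqrt{(\kappa-1)/2}\,r$ and hence $R \le \bigl(\sqrt{2(\kappa-1)}+1\bigr)r$, is rigorous and actually delivers a \emph{better} constant than the paper's own proof. So your anxiety about the leading constant, while well-founded, should not count against your argument specifically: neither proof establishes the literal constant $1$ in the statement, both establish $R = O(\sqrt{\kappa}\,r)$, and that is all that matters downstream, since \Cref{lem:dx-bound}, \Cref{lem:x*-lipschitz}, and \Cref{thm:pmm-convergence} consume $R$ itself rather than the bound $\sqrt{\kappa}\,r$.

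The one step of your proposal that genuinely fails is the proposed repair in your last paragraph. The inequality $\tfrac{\mu}{2}\|x_{\beta'}-x_\beta\|_2^2 \le f_\beta(x_{\beta'}) - f_\beta(x_\beta)$ is fine, but the sandwich then bounds the right-hand side by $\tfrac{L}{2}\sum_i\beta_i\|x_{\beta'}-x_i^*\|_2^2$, and $\|x_{\beta'}-x_i^*\|_2$ is exactly the quantity being controlled: it can be of order $\sqrt{\kappa}\,r$ (two quadratics in the plane whose principal axes are rotated by $\pm\theta$ with $\tan\theta = \sqrt{\mu/L}$ produce a Pareto arc that bulges roughly $\tfrac{\sqrt{\kappa}}{4}r$ off the segment joining the minimizers). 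Feeding this back in yields only $\|x_{\beta'}-x_\beta\|_2 = O(\kappa\, r)$ --- strictly worse than your triangle inequality, not better. Your trade-off intuition is nonetheless correct and can be made to work when $n=2$: there $V_\beta = t(1-t)d^2$ and $\|\bar{x}_\beta - \bar{x}_{\beta'}\|_2 = |t-t'|\,d$ with $d = \|x_1^*-x_2^*\|_2$, and maximizing $\sqrt{(\kappa-1)t(1-t)} + |t-t'| + \sqrt{(\kappa-1)t'(1-t')}$ over $t,t'$ gives exactly $\sqrt{\kappa}$, attained at $t,t' = \tfrac12(1 \pm 1/\sqrt{\kappa})$, so your route recovers $R \le \sqrt{\kappa}\,r$ for two objectives. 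For $n > 2$, however, the analogous maximization can reach about $\sqrt{2(\kappa-1)}$ (e.g.\ two distributions supported on disjoint halves of a regular simplex of side $r$), which exceeds $\sqrt{\kappa}$ once $\kappa > 2$; so if the constant-$1$ claim is true in full generality, it requires an idea that neither your proof nor the paper's contains.
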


\begin{assumption} \label{ass:hesssmooth}
Let the objectives $f_1,\ldots, f_n : \mathbb{R}^d \to \mathbb{R}$ have $L_H$-Lipschitz continuous Hessian. That is, for all $x, y \in\mathbb{R}^d$ and $i = 1,\ldots, n$, we have $\norm{\nabla^2 f_i(x)-\nabla^2 f_i(y)}_2\leq L_{H}\norm{x-y}_2$.
\end{assumption}

\begin{restatable}[Smoothness of $x^*$]{lemma}{xlipschitz} \label{lem:x*-lipschitz}
    Suppose $F$ satisfies Assumptions~\ref{ass:objectives},\ref{ass:hesssmooth}. Then, $x^* : \Delta^{n-1} \to \mathbb{R}^d$ is $M_0$-Lipschitz continuous and has $M_1$-Lipschitz continuous gradients, where:
    \[M_0 := \kappa R\qquad \textrm{and}\qquad M_1:= 2\kappa^2R \left(1 + \frac{L_H R}{\mu}\right).\]
\end{restatable}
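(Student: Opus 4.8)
The plan is to work directly with the closed-form Jacobian $\nabla x^*(\beta) = -\nabla^2 f_\beta(x_\beta)^{-1}\nabla F(x_\beta)^\top$ supplied by \Cref{prop:lifted-pareto-set}, bounding its operator norm (for the $M_0$ claim) and its variation in $\beta$ (for the $M_1$ claim). The single geometric input tying everything to $R$ is the estimate $\norm{\nabla f_i(x_\beta)} \le LR$ for every $i$ and every $\beta$: since both $x_\beta = x^*(\beta)$ and the individual minimizer $x^*(e_i) = \argmin f_i$ lie in $\mathrm{Pareto}(F)$, they are at distance at most $R$ by \Cref{lem:diam}, and $L$-Lipschitzness of $\nabla f_i$ (together with $\nabla f_i(x^*(e_i)) = 0$) converts this into the gradient bound. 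Combined with $\mu\mathbf{I} \preceq \nabla^2 f_\beta \preceq L\mathbf{I}$, which holds for every $\beta \in \Delta^{n-1}$ because the scalarization Hessian is a convex combination of the $\nabla^2 f_i$, this immediately controls the product: $\norm{\nabla x^*(\beta)(\beta'-\beta)} \le \tfrac1\mu \norm{\sum_i(\beta'_i-\beta_i)\nabla f_i(x_\beta)} \le \tfrac{LR}{\mu}\norm{\beta'-\beta}_1 = \kappa R\,\norm{\beta'-\beta}_1$, giving $M_0 = \kappa R$. (Measuring simplex distances in $\ell_1$ is what keeps the column-sum estimate dimension-free; the same convention is used throughout.) I would then obtain the Lipschitz bound on $x^*$ itself either by integrating this Jacobian bound along the segment $[\beta,\beta']\subset\Delta^{n-1}$, or more robustly by the implicit-function argument: subtract the two stationarity identities $\nabla f_\beta(x_\beta)=\nabla f_{\beta'}(x_{\beta'})=0$, insert $\nabla f_{\beta'}(x_\beta)$, and read off $\widetilde H(x_{\beta'}-x_\beta) = -\sum_i(\beta'_i-\beta_i)\nabla f_i(x_\beta)$ with an averaged Hessian $\widetilde H\succeq \mu\mathbf{I}$.

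For the gradient-Lipschitz claim I would bound $\norm{\nabla x^*(\beta)-\nabla x^*(\beta')}$ directly, rather than computing a second-derivative tensor of $x^*$. Writing $\nabla x^*(\beta) = B(\beta)C(\beta)$ with $B(\beta) := -\nabla^2 f_\beta(x_\beta)^{-1}$ and $C(\beta) := \nabla F(x_\beta)^\top$, the product rule gives $\norm{B(\beta)C(\beta)-B(\beta')C(\beta')} \le \norm{B(\beta)}\,\norm{C(\beta)-C(\beta')} + \norm{B(\beta)-B(\beta')}\,\norm{C(\beta')}$. The four pieces are: $\norm{B}\le \tfrac1\mu$ and $\norm{C}\le LR$ (from Step 1's inputs); the Lipschitzness of $C$, whose $i$-th column varies by $\norm{\nabla f_i(x_\beta)-\nabla f_i(x_{\beta'})} \le L\norm{x_\beta-x_{\beta'}} \le LM_0\norm{\beta-\beta'}_1$; and the Lipschitzness of $B$, handled via the resolvent identity $\norm{B(\beta)-B(\beta')} \le \tfrac1{\mu^2}\norm{\nabla^2 f_\beta(x_\beta)-\nabla^2 f_{\beta'}(x_{\beta'})}$.

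The heart of the argument, and the step I expect to be the main obstacle, is this last Hessian difference, because it mixes two distinct sources of variation: the base point $x_\beta$ moves, and the scalarization weights $\beta$ change. I would split it as $\nabla^2 f_\beta(x_\beta)-\nabla^2 f_{\beta}(x_{\beta'})$ plus $\nabla^2 f_\beta(x_{\beta'})-\nabla^2 f_{\beta'}(x_{\beta'})$. The first is a same-scalarization, moving-point term, bounded by $L_H\norm{x_\beta-x_{\beta'}} \le L_H M_0\norm{\beta-\beta'}_1$ using the $L_H$-Lipschitz Hessian of \Cref{ass:hesssmooth} (inherited by $f_\beta$ as a convex combination) and Step 1; the second is a fixed-point, moving-weight term equal to $\sum_i(\beta_i-\beta'_i)\nabla^2 f_i(x_{\beta'})$, bounded by $L\norm{\beta-\beta'}_1$ since each $\norm{\nabla^2 f_i}\le L$. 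Substituting $M_0 = \kappa R$ and collecting terms yields a constant of the form $\kappa^2 R\bigl(2 + \tfrac{L_H R}{\mu}\bigr)$, which is dominated by the stated $M_1 = 2\kappa^2 R(1 + L_H R/\mu)$; the only care needed is to keep the operator norms consistent ($\ell_1\!\to\!\ell_2$ for $C$ and $\nabla x^*$, $\ell_2\!\to\!\ell_2$ for the symmetric factor $B$) so that no $\sqrt{n}$ dimension factor creeps in.
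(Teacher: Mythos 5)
Your proposal is correct and follows essentially the same route as the paper: both bound $\|\nabla F(x_\beta)^\top\|_{1,2}\le LR$ via the gradient-at-minimizer trick to get $\|\nabla x^*\|_{1,2}\le \kappa R$, and both control $\|\nabla x^*(\beta)-\nabla x^*(\beta')\|_{1,2}$ by an add-and-subtract (product-rule) decomposition that separates the moving-base-point contribution (handled with $L_H$ and the $M_0$ bound) from the moving-weight contribution (handled with the $L/\mu^2$ resolvent/matrix-inverse bound), arriving at the same constant $\kappa^2R\left(2+\tfrac{L_HR}{\mu}\right)\le M_1$. The only difference is cosmetic: the paper splits the difference of \emph{inverse} Hessians into two pieces (its Lemmas on matrix inverses), whereas you apply the resolvent identity once and split the raw Hessian difference, which is the same estimate in a slightly different order.
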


\begin{restatable}[Approximability of $\nabla x^*$]{lemma}{approxdx} \label{lem:approx-dx-bound}
    If $F$ satisfies Assumptions~\ref{ass:objectives},\ref{ass:hesssmooth}. Then:
    \[\|\nabla x^*(\beta) - \widehat{\nabla} x^*(x,\beta)\|_{1,2} \leq \frac{1}{\mu} \frac{M_1}{2M_0} \|\nabla f_\beta(x)\|_2.\]
\end{restatable}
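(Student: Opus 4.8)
The plan is to observe that both $\nabla x^*(\beta)$ and $\widehat{\nabla}x^*(x,\beta)$ are the \emph{same} matrix-valued map evaluated at two different points. Writing $\Phi_\beta(y) := -\nabla^2 f_\beta(y)^{-1}\nabla F(y)^\top$, \Cref{prop:lifted-pareto-set} gives $\nabla x^*(\beta) = \Phi_\beta(x_\beta)$, while \Cref{eqn:dx-approx} gives $\widehat{\nabla}x^*(x,\beta) = \Phi_\beta(x)$. Since $\|\cdot\|_{1,2}$ is the operator norm from $(\mathbb{R}^n,\|\cdot\|_1)$ to $(\mathbb{R}^d,\|\cdot\|_2)$, which equals the largest Euclidean norm among the columns, it suffices to bound each column separately (note this is also what forces the norm interpretation: summing column norms would cost an extra factor of $n$ and miss the stated constant). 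The $i$-th column of $\Phi_\beta(y)$ is $\phi_i(y) := -\nabla^2 f_\beta(y)^{-1}\nabla f_i(y)$, so I would bound $\|\phi_i(x_\beta) - \phi_i(x)\|_2$ for each $i \in [n]$ and then take the maximum. The first step is to telescope by inserting $\nabla^2 f_\beta(x)^{-1}\nabla f_i(x_\beta)$, writing the difference as a Hessian-difference term $\big(\nabla^2 f_\beta(x)^{-1} - \nabla^2 f_\beta(x_\beta)^{-1}\big)\nabla f_i(x_\beta)$ plus a gradient-difference term $\nabla^2 f_\beta(x)^{-1}\big(\nabla f_i(x) - \nabla f_i(x_\beta)\big)$. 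The choice to split so that both remaining gradient factors are evaluated at $x_\beta$ rather than $x$ is essential, for the reason explained below.

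For the gradient-difference term I would use $\|\nabla^2 f_\beta(x)^{-1}\|_2 \le 1/\mu$ (strong convexity of the convex combination $f_\beta$) together with the $L$-Lipschitz gradient of $f_i$, giving a bound of $\tfrac{L}{\mu}\|x - x_\beta\|_2$. For the Hessian-difference term I would apply the resolvent identity $A^{-1}-B^{-1} = A^{-1}(B-A)B^{-1}$, the two-sided bound $\mu\mathbf{I}\preceq\nabla^2 f_\beta\preceq L\mathbf{I}$, and the fact that $\nabla^2 f_\beta$ inherits $L_H$-Lipschitz continuity from the $f_i$ (again as a convex combination), yielding $\|\nabla^2 f_\beta(x)^{-1} - \nabla^2 f_\beta(x_\beta)^{-1}\|_2 \le \tfrac{L_H}{\mu^2}\|x-x_\beta\|_2$. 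What remains is to control the leftover factor $\|\nabla f_i(x_\beta)\|_2$, and this is the crux of the argument: since $x_\beta$ and $\argmin f_i = x^*(e_i)$ both lie in $\mathrm{Pareto}(F)$, \Cref{lem:diam} bounds their separation by $R$, so $\|\nabla f_i(x_\beta)\|_2 = \|\nabla f_i(x_\beta) - \nabla f_i(x^*(e_i))\|_2 \le L R$. This is exactly why the decomposition must place $\nabla f_i$ at $x_\beta$: the analogous quantity $\|\nabla f_i(x)\|_2$ at the off-manifold point $x$ admits no such bound.

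Combining the two terms gives $\|\phi_i(x_\beta) - \phi_i(x)\|_2 \le \big(\tfrac{L}{\mu} + \tfrac{L L_H R}{\mu^2}\big)\|x-x_\beta\|_2$, and I would then recognize the constant as $\tfrac{L}{\mu}\big(1 + \tfrac{L_H R}{\mu}\big) = \kappa\big(1 + \tfrac{L_H R}{\mu}\big) = \tfrac{M_1}{2M_0}$, matching the Lipschitz constants from \Cref{lem:x*-lipschitz}. Taking the maximum over $i$ converts this into the $\|\cdot\|_{1,2}$ bound. Finally, $\mu$-strong convexity of $f_\beta$ with $\nabla f_\beta(x_\beta)=0$ gives $\|x-x_\beta\|_2 \le \tfrac{1}{\mu}\|\nabla f_\beta(x)\|_2$ via the standard coercivity inequality $\langle \nabla f_\beta(x), x - x_\beta\rangle \ge \mu\|x-x_\beta\|_2^2$, and substituting this in delivers the claim. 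I expect the main obstacle to be purely organizational: correctly choosing the telescoping split so that the uncontrolled gradient magnitude is evaluated on the Pareto set (where \Cref{lem:diam} applies), and verifying that the convex-combination structure of $f_\beta$ faithfully transmits the per-objective strong convexity, gradient-Lipschitz, and Hessian-Lipschitz constants.
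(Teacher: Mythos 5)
Your proposal is correct and follows essentially the same route as the paper: the insertion of the mixed term $\nabla^2 f_\beta(x)^{-1}\nabla F(x_\beta)^\top$ (your column-wise split is the same decomposition, since $\|\cdot\|_{1,2}$ is the maximum column norm), the bounds $\|\nabla^2 f_\beta(x)^{-1}\|_2 \leq 1/\mu$, the resolvent-identity Lipschitz bound $L_H/\mu^2$ on the inverse Hessian (the paper's \Cref{lem:mat-inv-lipschitz}), the bound $\|\nabla f_i(x_\beta)\|_2 \leq LR$ on the Pareto set (the paper's \Cref{lem:dF-bound}), and the closing coercivity step $\|x - x_\beta\|_2 \leq \mu^{-1}\|\nabla f_\beta(x)\|_2$ all match the paper's proof, with the correct constant $\frac{M_1}{2M_0} = \kappa\bigl(1 + \tfrac{L_H R}{\mu}\bigr)$.
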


\begin{assumption}\label{ass:f0}
    Let the preference function $f_0 : \mathbb{R}^d \to \mathbb{R}$ have $L_0$-Lipschitz continuous gradient. That is, for all $x, y \in \mathbb{R}^d$, we have $\norm{\nabla f_0(x)-\nabla f_0(y)}_2\leq L_{0}\norm{x-y}_2$. 
\end{assumption}

\begin{restatable}[Approximability of $\nabla(f_0 \circ x^*)$]{lemma}{approxgradient} \label{lem:appro-gradient-bound}
    If $F$ and $f_0$ satisfy Assumptions~\ref{ass:objectives},\ref{ass:hesssmooth},\ref{ass:f0}. Then:
    \[\big\|\nabla (f_0 \circ x^*)(\beta)^\top - \nabla f_0(x)^\top \widehat{\nabla} x^*(x,\beta)\big\|_{1,2} \leq \frac{1}{\mu} \left(\frac{M_1}{2M_0}\|\nabla f_0(x)\|_2 + L_0M_0 \right) \|\nabla f_\beta(x)\|_2.\]
    We denote the right-hand side by $\mathrm{err}_{\nabla f_0}(x,\beta)$.
\end{restatable}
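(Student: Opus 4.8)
The plan is to reduce the lemma to two previously established facts through a single add-and-subtract decomposition, after which the remaining work is routine bookkeeping. Write $g := \nabla f_0(x_\beta)$ and $\widehat g := \nabla f_0(x)$ for the true and proxy preference gradients, and $A := \nabla x^*(\beta)$ and $\widehat A := \widehat{\nabla} x^*(x,\beta)$ for the true and proxy Jacobians from \Cref{eqn:x-best-response-gradient} and \Cref{eqn:dx-approx}. By the chain rule $\nabla(f_0\circ x^*)(\beta)^\top = g^\top A$, so the quantity to bound is $g^\top A - \widehat g^\top \widehat A$. I would split this as
\[
g^\top A - \widehat g^\top \widehat A = (g - \widehat g)^\top A + \widehat g^\top(A - \widehat A),
\]
and then estimate each summand by the triangle inequality. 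Attaching $\widehat g = \nabla f_0(x)$ (rather than $g$) to the Jacobian-difference term is deliberate: it is $\|\nabla f_0(x)\|_2$, the quantity computable at the proxy point, that appears on the right-hand side of the target inequality.

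For the second summand I would invoke submultiplicativity of the mixed norm, $\|u^\top B\|_{1,2} \le \|u\|_2\,\|B\|_{1,2}$ (which follows column-by-column from Cauchy--Schwarz), to get $\|\widehat g^\top(A - \widehat A)\|_{1,2} \le \|\nabla f_0(x)\|_2\,\|A - \widehat A\|_{1,2}$, and then apply \Cref{lem:approx-dx-bound} directly to the second factor. This produces exactly the term $\tfrac{1}{\mu}\tfrac{M_1}{2M_0}\|\nabla f_0(x)\|_2\,\|\nabla f_\beta(x)\|_2$.

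For the first summand I would again use submultiplicativity to write $\|(g-\widehat g)^\top A\|_{1,2}\le \|g - \widehat g\|_2\,\|A\|_{1,2}$ and control the three resulting pieces. First, $\|A\|_{1,2} = \|\nabla x^*(\beta)\|_{1,2}\le M_0$, which is precisely the gradient bound underlying the Lipschitz claim of \Cref{lem:x*-lipschitz}. Second, Lipschitz continuity of $\nabla f_0$ (\Cref{ass:f0}) gives $\|g - \widehat g\|_2 = \|\nabla f_0(x_\beta) - \nabla f_0(x)\|_2 \le L_0\|x_\beta - x\|_2$. Third, since $x_\beta$ minimizes the $\mu$-strongly convex scalarization $f_\beta$ we have $\nabla f_\beta(x_\beta) = 0$, so $\mu$-strong convexity yields $\|x_\beta - x\|_2 \le \tfrac1\mu\|\nabla f_\beta(x)\|_2$. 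Chaining these three estimates gives $\tfrac1\mu L_0 M_0\|\nabla f_\beta(x)\|_2$, and adding the two bounds reproduces the stated right-hand side $\mathrm{err}_{\nabla f_0}(x,\beta)$.

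The only genuinely delicate point is the norm bookkeeping: one must confirm that $\|\cdot\|_{1,2}$ is compatible with the $\ell_2$ vector norm in the submultiplicative sense used above, and that the same $\|\cdot\|_{1,2}$ bound on $\nabla x^*(\beta)$ is exactly the one certifying the $M_0$-Lipschitz constant of $x^*$ in \Cref{lem:x*-lipschitz}. With those alignments in place, no new estimate beyond \Cref{lem:approx-dx-bound}, \Cref{lem:x*-lipschitz}, \Cref{ass:f0}, and the elementary strong-convexity inequality is needed, and the computation closes cleanly.
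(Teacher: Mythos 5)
Your proposal is correct and follows essentially the same route as the paper: the identical add-and-subtract decomposition $(g-\widehat g)^\top A + \widehat g^\top(A-\widehat A)$, with the first term controlled via $L_0$-smoothness of $f_0$, the bound $\|\nabla x^*(\beta)\|_{1,2}\le M_0$, and the strong-convexity estimate $\|x_\beta - x\|_2 \le \frac{1}{\mu}\|\nabla f_\beta(x)\|_2$, and the second term via \Cref{lem:approx-dx-bound}. The norm bookkeeping you flag ($\|u^\top B\|_{1,2}\le\|u\|_2\|B\|_{1,2}$) is exactly the compatibility the paper uses implicitly, so nothing further is needed.
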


\subsection{An approximate solution concept}
In practice, we generally can never exactly recover stationary points, so we further relax our target solution concept to an approximate version of preference stationarity in the standard way \citep{nesterov2013gradient}. To define our notion of approximation, we consider $\Delta^{n-1}$ as a metric space. While somewhat arbitrary, it is also fairly natural to endow $\Delta^{n-1}$ with the $\ell_1$-metric, so that it has unit diameter.

\begin{restatable}[Approximate preference stationarity]{definition}{approxprefstat} \label{def:approx-pref-stationarity}
    Let $\epsilon_0, \epsilon \geq 0$. A point $(x,\beta) \in \mathbb{R}^d \times \Delta^{n-1}$ is \emph{$(\epsilon_0, \epsilon)$-preference stationary} if:
    \begin{subequations}\label{eqn:approx-stationarity}
    \begin{alignat}{1}
     - \nabla f_0(x_\beta)^\top \nabla x^*(\beta) (\beta' - \beta) &\leq \epsilon_0 \|\beta' - \beta\|_1,\qquad \forall \beta' \in \Delta^{n-1}. \tag{\ref*{eqn:approx-stationarity}a}\\
    \|\nabla f_\beta(x)\|_2 &\leq \epsilon
    \tag{\ref*{eqn:approx-stationarity}b}
    \end{alignat}
    \end{subequations}
\end{restatable}

When the objectives and preference are sufficiently nice, then an approximate preference stationary solution $(\hat{x},\hat{\beta})$ has an intuitive meaning: (a) there is a ball around $\hat{\beta}$ within which $f_0 \circ x^*$ decreases at most at an $O(\epsilon_0)$-rate when moving away from $\hat{\beta}$, and (b) the point $\hat{x}$ is $O(\epsilon)$-close to $x_{\hat{\beta}}$.

\begin{restatable}[Geometric meaning of approximate stationarity]{proposition}{propapproxstat} \label{prop:approx-stationary}  
    Let $F$ and $f_0$ satisfy Assumptions~\ref{ass:objectives},\ref{ass:hesssmooth},\ref{ass:f0} and let $(\hat{x}, \hat{\beta})$ be $(\epsilon_0, \epsilon)$-preference stationary. The following hold:
    \begin{enumerate}
        \item[a.] if $\|\beta - \hat{\beta}\|_1 \leq s$, then $\displaystyle f_0(x_{\beta}) - f_0(x_{\hat{\beta}})\geq  - 2\epsilon_0  \|\beta - \hat{\beta}\|_1$, and
        \item[b.] $\|\hat{x} - x_{\hat{\beta}}\|_2 \leq \epsilon/\mu$,
    \end{enumerate}
    where we let $R$ is defined in \Cref{lem:diam} and  $s := \frac{2 \mu^2 \epsilon_0}{L_0 L^2 R^2}$. 
\end{restatable}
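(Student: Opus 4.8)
For \textbf{part (b)} I would argue first, since it is immediate from strong convexity. Because $(\hat{x},\hat\beta)$ is $(\epsilon_0,\epsilon)$-preference stationary, the second inequality of \Cref{def:approx-pref-stationarity} gives $\|\nabla f_{\hat\beta}(\hat{x})\|_2 \le \epsilon$. As $\hat\beta \in \Delta^{n-1}$, the scalarization $f_{\hat\beta} = \sum_i \hat\beta_i f_i$ inherits $\mu$-strong convexity from \Cref{ass:objectives}, and $x_{\hat\beta}$ is its unique minimizer, so $\nabla f_{\hat\beta}(x_{\hat\beta}) = 0$. The standard monotonicity estimate $\langle \nabla f_{\hat\beta}(\hat{x}) - \nabla f_{\hat\beta}(x_{\hat\beta}),\, \hat{x}-x_{\hat\beta}\rangle \ge \mu\|\hat{x}-x_{\hat\beta}\|_2^2$ together with Cauchy--Schwarz yields $\mu\|\hat{x}-x_{\hat\beta}\|_2^2 \le \|\nabla f_{\hat\beta}(\hat{x})\|_2\,\|\hat{x}-x_{\hat\beta}\|_2 \le \epsilon\|\hat{x}-x_{\hat\beta}\|_2$, and dividing gives $\|\hat{x}-x_{\hat\beta}\|_2 \le \epsilon/\mu$.

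For \textbf{part (a)} the plan is to combine the first-order stationarity inequality with a quadratic remainder bound on the pullback $h := f_0 \circ x^*$. Writing $\nabla h(\hat\beta) = \nabla x^*(\hat\beta)^\top \nabla f_0(x_{\hat\beta})$, the first inequality of \Cref{def:approx-pref-stationarity} is precisely $\nabla h(\hat\beta)^\top(\beta-\hat\beta) \ge -\epsilon_0\|\beta-\hat\beta\|_1$ for all $\beta \in \Delta^{n-1}$. If $h$ has an $\ell$-Lipschitz gradient on the simplex (from $\ell_1$ to $\ell_\infty$), then the descent-lemma lower bound
\[ h(\beta) - h(\hat\beta) \ge \nabla h(\hat\beta)^\top(\beta-\hat\beta) - \tfrac{\ell}{2}\|\beta-\hat\beta\|_1^2 \ge -\epsilon_0\|\beta-\hat\beta\|_1 - \tfrac{\ell}{2}\|\beta-\hat\beta\|_1^2 \]
holds, and restricting to $\|\beta-\hat\beta\|_1 \le s := 2\epsilon_0/\ell$ forces $\tfrac{\ell}{2}\|\beta-\hat\beta\|_1^2 \le \epsilon_0\|\beta-\hat\beta\|_1$, so that the two pieces combine to $h(\beta)-h(\hat\beta) \ge -2\epsilon_0\|\beta-\hat\beta\|_1$. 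Matching $s = 2\mu^2\epsilon_0/(L_0 L^2 R^2)$ to this form requires $\ell = L_0 L^2 R^2/\mu^2 = L_0 M_0^2$, using $M_0 = \kappa R = LR/\mu$ from \Cref{lem:x*-lipschitz}.

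To produce the remainder bound and read off $\ell$, I would integrate along the segment $\beta_t = \hat\beta + t(\beta-\hat\beta)$, which stays in $\Delta^{n-1}$ by convexity. With velocity $\dot{x}(t) = \nabla x^*(\beta_t)(\beta-\hat\beta)$, the fundamental theorem of calculus gives
\[ h(\beta) - h(\hat\beta) - \nabla h(\hat\beta)^\top(\beta-\hat\beta) = \int_0^1 \Big[ \langle \nabla f_0(x_{\beta_t}) - \nabla f_0(x_{\hat\beta}),\, \dot{x}(t)\rangle + \langle \nabla f_0(x_{\hat\beta}),\, \dot{x}(t) - \dot{x}(0)\rangle \Big]\, dt. \]
The first integrand is bounded using $L_0$-smoothness of $f_0$ (\Cref{ass:f0}) and the $M_0$-Lipschitz estimate $\|x_{\beta_t}-x_{\hat\beta}\|_2 \le M_0\|\beta_t - \hat\beta\|_1$ together with $\|\dot{x}(t)\|_2 \le M_0\|\beta-\hat\beta\|_1$, contributing the leading factor $L_0 M_0^2$; the bounded diameter $R$ of $\mathrm{Pareto}(F)$ (\Cref{lem:diam}) keeps $\|\nabla f_0(x_{\hat\beta})\|_2$ finite, and the second integrand is controlled by the $M_1$-Lipschitzness of $\nabla x^*$ (which is why \Cref{ass:hesssmooth} is needed).

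The step I expect to be the main obstacle is exactly this curvature term. The second integrand genuinely introduces $M_1$ and $\sup_{\mathrm{Pareto}(F)}\|\nabla f_0\|_2$, so the tightest Lipschitz constant is $\ell = L_0 M_0^2 + M_1\sup_{\mathrm{Pareto}(F)}\|\nabla f_0\|_2$ rather than the clean $L_0 M_0^2$; equivalently, the secant $x_\beta - x_{\hat\beta}$ differs from the tangent $\nabla x^*(\hat\beta)(\beta-\hat\beta)$ appearing in the stationarity condition by an $O(M_1\|\beta-\hat\beta\|_1^2)$ term. Recovering the stated threshold $s = 2\epsilon_0/(L_0 M_0^2)$ therefore hinges on showing this curvature contribution is dominated on the relevant $\ell_1$-scale (or folding it into the leading constant), and this careful bookkeeping of the constants from \Cref{lem:x*-lipschitz} is where I would concentrate the effort.
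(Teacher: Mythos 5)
Your part (b) is exactly the paper's argument: strong convexity of $f_{\hat\beta}$ plus $\|\nabla f_{\hat\beta}(\hat{x})\|_2\le\epsilon$ gives $\|\hat{x}-x_{\hat\beta}\|_2\le\epsilon/\mu$. Your plan for part (a) is also the paper's proof in all essentials: parametrize $\beta_t=\hat\beta+t(\beta-\hat\beta)$, pull the difference $f_0(x_\beta)-f_0(x_{\hat\beta})$ back as a line integral along $\gamma(t)=x^*(\beta_t)$, split the integrand into $\bigl[\nabla f_0(x_{\beta_t})-\nabla f_0(x_{\hat\beta})\bigr]$ (bounded by $L_0$-smoothness and the $M_0$-Lipschitzness of $x^*$, giving the $\tfrac12 L_0M_0^2\|\beta-\hat\beta\|_1^2$ term) plus $\nabla f_0(x_{\hat\beta})$ (handled by the stationarity condition), and then choose $s$ so the quadratic term is absorbed into one extra $\epsilon_0\|\beta-\hat\beta\|_1$.

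The ``main obstacle'' you flag, however, is not a defect of your write-up --- it is a genuine gap, and the paper's own proof steps over it silently. In bounding the second integral, the paper asserts $-\nabla f_0(x_{\hat\beta})^\top d\gamma(t) = -\nabla f_0(x_{\hat\beta})^\top\nabla x^*(\hat\beta)(\beta-\hat\beta)\,dt$, i.e.\ it replaces the moving derivative $\nabla x^*(\beta_t)$ by the fixed one $\nabla x^*(\hat\beta)$; only the latter is controlled by condition (\ref*{eqn:approx-stationarity}a), which is a statement at $\hat\beta$ alone. Patching this the way you describe --- adding and subtracting $\nabla x^*(\hat\beta)(\beta-\hat\beta)$ and invoking the $M_1$-Lipschitzness of $\nabla x^*$ from \Cref{lem:x*-lipschitz} --- costs an additional $\int_0^1 \|\nabla f_0(x_{\hat\beta})\|_2\,M_1 t\,\|\beta-\hat\beta\|_1^2\,dt = \tfrac12 M_1\|\nabla f_0(x_{\hat\beta})\|_2\,\|\beta-\hat\beta\|_1^2$, so the argument actually proves the proposition with modulus $\ell = L_0M_0^2 + M_1\|\nabla f_0(x_{\hat\beta})\|_2$ and threshold $s = 2\epsilon_0/\ell$ (finite, since $\|\nabla f_0\|_2$ is bounded on the compact set $\mathrm{Pareto}(F)$ by \Cref{lem:diam}), not with the stated $s = 2\mu^2\epsilon_0/(L_0L^2R^2)$. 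So your diagnosis is correct: your version, completed with the curvature term, is the rigorous one; to retain the paper's stated $s$ one must either weaken the constant $2$ in the conclusion or argue separately that the curvature contribution vanishes, and no such argument is available in general.
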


\begin{restatable}[Verifiability of approximate stationarity]{lemma}{approxpref} \label{lem:approx-pref-stationarity-computable}
    Let $F$ and $f_0$ satisfy Assumptions~\ref{ass:objectives},\ref{ass:hesssmooth},\ref{ass:f0}. Then $(\hat{x},\hat{\beta})$ is $(\epsilon_0, \epsilon)$-preference stationary if $\|\nabla f_{\hat{\beta}}(\hat{x})\|_2 \leq \epsilon$, and for some $x \in \mathbb{R}^d$ and $\alpha \in (0,1)$,
    \begin{enumerate}
        \item an $\alpha \cdot \epsilon_0$-approximate stationary condition holds:
        \begin{equation} \label{eqn:approx-stationary-condition}
        -\nabla f_0(x)^\top \widehat{\nabla} x^*(x,\hat{\beta}) (\beta' - \hat{\beta}) \leq \alpha \cdot \epsilon_0 \|\beta' - \hat{\beta}\|_1,
        \end{equation}
        \item an error bound holds:
        \[\mathrm{err}_{\nabla f_0}(\hat{\beta}, x) \leq (1 - \alpha) \cdot \epsilon_0.\]
    \end{enumerate}
\end{restatable}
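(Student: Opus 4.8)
The plan is to verify the two defining inequalities of \Cref{def:approx-pref-stationarity} separately. Condition (b), which reads $\|\nabla f_{\hat\beta}(\hat x)\|_2 \le \epsilon$, is literally one of the hypotheses, so it requires no work. All the effort goes into establishing condition (a), namely $-\nabla f_0(x_{\hat\beta})^\top \nabla x^*(\hat\beta)(\beta' - \hat\beta) \le \epsilon_0 \|\beta' - \hat\beta\|_1$ for every $\beta' \in \Delta^{n-1}$. First I would rewrite the exact quantity through the chain rule as $\nabla(f_0 \circ x^*)(\hat\beta)^\top = \nabla f_0(x_{\hat\beta})^\top \nabla x^*(\hat\beta)$, so that the target becomes $-\nabla(f_0\circ x^*)(\hat\beta)^\top(\beta'-\hat\beta) \le \epsilon_0\|\beta'-\hat\beta\|_1$.

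Next I would add and subtract the computable surrogate $\nabla f_0(x)^\top \widehat{\nabla} x^*(x,\hat\beta)$ at the auxiliary point $x$ furnished by the hypotheses, splitting the left-hand side into a surrogate term and an error term:
\[
-\nabla(f_0\circ x^*)(\hat\beta)^\top(\beta'-\hat\beta) = -\nabla f_0(x)^\top\widehat{\nabla}x^*(x,\hat\beta)(\beta'-\hat\beta) + \big(\nabla f_0(x)^\top\widehat{\nabla}x^*(x,\hat\beta) - \nabla(f_0\circ x^*)(\hat\beta)^\top\big)(\beta'-\hat\beta).
\]
The first term is bounded by $\alpha\,\epsilon_0\|\beta'-\hat\beta\|_1$ directly by hypothesis~1, i.e. \eqref{eqn:approx-stationary-condition}. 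For the error term I would apply a dual-norm (Hölder) inequality tailored to the $\ell_1$-metric on the simplex: the crucial observation is that the mixed norm $\|\cdot\|_{1,2}$, interpreted as the operator norm from $(\mathbb{R}^n, \|\cdot\|_1)$ to $\ell_2$, collapses on a row vector to the $\ell_\infty$-norm of its entries, which is exactly dual to $\|\cdot\|_1$. Hence the error term is at most $\|\nabla f_0(x)^\top\widehat{\nabla}x^*(x,\hat\beta) - \nabla(f_0\circ x^*)(\hat\beta)^\top\|_{1,2}\cdot\|\beta'-\hat\beta\|_1$. Invoking \Cref{lem:appro-gradient-bound} bounds the first factor by $\mathrm{err}_{\nabla f_0}(x,\hat\beta)$, and hypothesis~2 bounds that in turn by $(1-\alpha)\epsilon_0$. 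Summing the two contributions yields $\big(\alpha + (1-\alpha)\big)\epsilon_0\|\beta'-\hat\beta\|_1 = \epsilon_0\|\beta'-\hat\beta\|_1$, which is precisely condition (a), completing the verification.

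The argument is in essence a single triangle inequality, so there is no deep obstacle; the only genuine care points are bookkeeping. The first is correctly identifying $\|\cdot\|_{1,2}$ on the row-vector difference with the $\ell_\infty$-norm, so that Hölder legitimately pairs it against the $\ell_1$-metric used to measure approximation throughout; this is what forces the clean split of $\epsilon_0$ into the $\alpha$ and $(1-\alpha)$ pieces. The second is reconciling the argument order in the statement's $\mathrm{err}_{\nabla f_0}(\hat\beta, x)$ with the definition $\mathrm{err}_{\nabla f_0}(x,\beta)$ of \Cref{lem:appro-gradient-bound}, which I read as a transposition of arguments (a typo), using the evaluation $\mathrm{err}_{\nabla f_0}(x,\hat\beta)$ that makes the error bound and \Cref{lem:appro-gradient-bound} line up. Beyond this I anticipate no further difficulty.
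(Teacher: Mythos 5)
Your proposal is correct and is essentially the paper's own proof: both add and subtract the computable surrogate $\nabla f_0(x)^\top \widehat{\nabla}x^*(x,\hat\beta)$, bound the resulting error term via \Cref{lem:appro-gradient-bound} (using that $\|\cdot\|_{1,2}$ on a row vector pairs with the $\ell_1$-metric), and split $\epsilon_0$ into the $\alpha$ and $(1-\alpha)$ pieces matched to hypotheses 1 and 2. Your reading of $\mathrm{err}_{\nabla f_0}(\hat\beta,x)$ as a transposition of the arguments in \Cref{lem:appro-gradient-bound} is also the right one.
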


\section{Analysis of Pareto majorization-minimization}
\label{sec:analysis}
In this section, we give an explicit majorizing family of positive-definite quadratics surrogates, and we provide a condition for when \Cref{alg:pareto-mm} converges.

\subsection{A family of majorizing surrogates}
Because $f_0$ and $x^*$ are respectively $L_0$- and $M_1$-Lipschitz smooth (\Cref{ass:f0} and \Cref{lem:approx-dx-bound}), their composition is also Lipschitz smooth and admits the quadratic upper bound:
\begin{align*} 
f_0(x_{\beta'}) &\leq f_0(x_\beta) + \nabla (f_0 \,\circ\, x^*)(\beta)^\top (\beta' - \beta) + \frac{1}{2} n L_0 M_1 \|\beta' - \beta\|_2^2,
\end{align*}
where the dimension $n$ in the second term comes from $\|\beta' - \beta\|_1^2 \leq n \|\beta' - \beta|_2^2$. And even though the gradient $\nabla (f_0 \circ x^*)(\beta)^\top$ is implicit, we can approximate it using $\nabla f_0(x)^\top \widehat{\nabla}x^*(x,\beta)$ where the error is bounded by \Cref{lem:appro-gradient-bound}. This implies the following family of majorizing surrogates:
\begin{restatable}[A family of majorizing surrogates]{proposition}{majsur} \label{prop:maj-sur}
Suppose $F$ and $f_0$ satisfy Assumptions~\ref{ass:objectives},\ref{ass:hesssmooth},\ref{ass:f0}. Let $\mathrm{err}_{\nabla f_0}(x,\beta)$ be as defined in \Cref{lem:appro-gradient-bound}. Define:
    \begin{equation} \label{eqn:surrogate}
    g (\beta'; x, \beta) := f_0(x_\beta) + \nabla f_0(x)^\top \widehat{\nabla} x^*(x,\beta) (\beta' - \beta) + \frac{1}{2} \mu_g \|\beta'- \beta\|_2^2 + \mathrm{err}_{\nabla f_0}(x,\beta),
\end{equation} 
where $\mu_g := n L_0 M_1$. Then $g(\beta'; \beta,x)$ majorizes $f_0 \circ x^*$, satisfying \Cref{eqn:majorizing}.
\end{restatable}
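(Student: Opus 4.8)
The plan is to combine two ingredients that are already available: the descent-lemma quadratic upper bound for the composite $f_0 \circ x^*$, and the gradient-approximation error bound of \Cref{lem:appro-gradient-bound}. Since $f_0$ is $L_0$-smooth (\Cref{ass:f0}) and $x^*$ is both $M_0$-Lipschitz and $M_1$-smooth (\Cref{lem:x*-lipschitz}), the composition $f_0 \circ x^*$ is Lipschitz smooth, and (as recorded just before the statement) the standard quadratic upper bound for smooth functions gives, for every $\beta, \beta' \in \Delta^{n-1}$,
\begin{equation} \label{eqn:plan-descent}
f_0(x_{\beta'}) \leq f_0(x_\beta) + \nabla (f_0 \circ x^*)(\beta)^\top (\beta' - \beta) + \tfrac{1}{2} \mu_g \|\beta' - \beta\|_2^2,
\end{equation}
with $\mu_g = n L_0 M_1$, the factor $n$ arising from $\|\cdot\|_1^2 \leq n\|\cdot\|_2^2$. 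The constant term $f_0(x_\beta)$ and the quadratic term of the target surrogate $g$ in \Cref{eqn:surrogate} already coincide with the right-hand side of \Cref{eqn:plan-descent}; the only discrepancy is that the \emph{true} gradient $\nabla(f_0 \circ x^*)(\beta)$ appears in \Cref{eqn:plan-descent}, whereas $g$ uses the computable proxy $\widehat{\nabla}x^*(x,\beta)^\top \nabla f_0(x)$. Thus the entire proof reduces to absorbing the resulting linear error term into the additive constant $\mathrm{err}_{\nabla f_0}(x,\beta)$.

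To carry this out, I would add and subtract the proxy inside the linear term of \Cref{eqn:plan-descent},
\[
\nabla (f_0 \circ x^*)(\beta)^\top (\beta' - \beta) = \nabla f_0(x)^\top \widehat{\nabla}x^*(x,\beta)(\beta' - \beta) + \Delta^\top (\beta' - \beta),
\]
where $\Delta := \nabla (f_0 \circ x^*)(\beta) - \widehat{\nabla}x^*(x,\beta)^\top \nabla f_0(x)$ is exactly the (transposed) quantity controlled by \Cref{lem:appro-gradient-bound}. Applying H\"older's inequality gives $\Delta^\top(\beta' - \beta) \leq \|\Delta\|_\infty \|\beta' - \beta\|_1$, and since the $\|\cdot\|_{1,2}$ norm of the row vector $\Delta^\top$ dualizes against $\|\cdot\|_1$ and reduces to $\|\Delta\|_\infty$, \Cref{lem:appro-gradient-bound} yields $\|\Delta\|_\infty \leq \mathrm{err}_{\nabla f_0}(x,\beta)$. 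Bounding $\|\beta' - \beta\|_1$ by the diameter of $\Delta^{n-1}$ then collapses the error term to the flat constant $\mathrm{err}_{\nabla f_0}(x,\beta)$, and substituting back into \Cref{eqn:plan-descent} produces precisely $f_0(x_{\beta'}) \leq g(\beta'; x, \beta)$, which is the majorization property \Cref{eqn:majorizing}.

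The main obstacle is the norm bookkeeping around the cross term, and it is worth isolating two points. First, one must confirm that $\widehat{\nabla}x^*(x,\beta)^\top\nabla f_0(x)$ is exactly the proxy whose deviation from $\nabla(f_0\circ x^*)(\beta)$ is measured by \Cref{lem:appro-gradient-bound}, and that the mixed norm there pairs correctly with $\|\beta' - \beta\|_1$ under H\"older; this is what licenses trading the implicit gradient for a computable one at the cost of a single additive constant. Second, one must ensure that $\|\beta' - \beta\|_1$ is controlled by the diameter normalizing the metric on $\Delta^{n-1}$, so that the error contribution does not grow with $\|\beta' - \beta\|_1$; here one may additionally exploit the zero-sum structure $\mathbf{1}^\top(\beta' - \beta) = 0$, which allows replacing $\|\Delta\|_\infty$ by the tighter oscillation seminorm of $\Delta$ if a sharper constant is desired. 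Everything else is a direct substitution, as the quadratic term and the base value $f_0(x_\beta)$ transfer verbatim from \Cref{eqn:plan-descent}.
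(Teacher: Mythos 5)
Your proof is correct and follows essentially the same route as the paper: the paper's entire argument for \Cref{prop:maj-sur} is the paragraph immediately preceding it---the quadratic (descent-lemma) upper bound on $f_0 \circ x^*$ obtained from \Cref{ass:f0} and \Cref{lem:x*-lipschitz}, followed by exchanging $\nabla(f_0\circ x^*)(\beta)$ for the proxy $\nabla f_0(x)^\top \widehat{\nabla}x^*(x,\beta)$ and absorbing the H\"older error term $\|\Delta\|_\infty\,\|\beta'-\beta\|_1$ into the additive constant via \Cref{lem:appro-gradient-bound}, exactly as you do (including the observation that the $\|\cdot\|_{1,2}$ norm of a row vector is its $\ell_\infty$ norm). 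One shared caveat: the final absorption needs $\|\beta'-\beta\|_1\le 1$, but the $\ell_1$ diameter of $\Delta^{n-1}$ is $2$ (the paper's ``unit diameter'' remark implicitly halves the metric), so with the literal $\ell_1$ norm the flat constant should be $2\,\mathrm{err}_{\nabla f_0}(x,\beta)$---a harmless factor you inherit from the paper rather than introduce, and one that your oscillation-seminorm remark does not by itself remove.
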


Note that technically we cannot explicitly compute the value $g(\beta'; x,\beta)$ because it contains the term $f_0(x_\beta)$. However, we can compute the difference $g(\beta'; x,\beta) - g(\beta; x, \beta)$, which is enough to optimize $g$ and to prove descent for the iterates of any given optimization scheme.

\subsection{Convergence analysis}

We now give the convergence result for the Pareto majorization-minimization algorithm. We make use of the sufficient condition provided by \Cref{lem:approx-pref-stationarity-computable}, which can be determined using approximate information. And as \Cref{alg:pareto-mm} can make use of any black-box optimizer, we state the result in terms of the convergence guarantees of the black-box optimizers. 

In particular, the PMM algorithm uses two optimizers: one for the surrogate $g(\,\cdot\,;x, \beta)$ and another for the scalarized objective $f_\beta(\cdot)$. As we aim to achieve $\epsilon_0$-preference stationarity, we also ask the optimizer for the surrogate $g$ to achieve $O(\epsilon_0)$-approximate stationarity. 

But approximate stationarity with respect to $g$ only transfers to $f_0$ when the  surrogate is sufficiently tight, which depends on the performance of the optimizer for $f_\beta(\cdot)$. It turns out that we shall require that it achieves $O(\epsilon_0^2)$-optimality. This is because when we optimize a positive-definite quadratic over a convex set, finding an $\epsilon_0$-approximate stationary point $\hat{\beta}$ means finding an $O(\epsilon_0^2)$-approximately optimal point (\Cref{lem:approx-stationary-implies-close,lem:Q-descent}):
\[g(\hat{\beta}; x, \beta) < g(\beta^*; x,\beta) + O(\epsilon_0^2),\]
where $\beta^*$ minimizes the surrogate. But, the surrogate contains an approximation error $\mathrm{err}_{\nabla f_0}(\beta,x)$. If this error term is larger than $\Omega(\epsilon_0^2)$, then it is possible for the surrogate to fail to either (i) decide that the current iterate $\beta$ is $\epsilon_0$-preference stationary or (ii) make progress by finding some $\hat{\beta}$ that certifiably improves on $f_0$. We preclude this by requiring the optimizer for $f_\beta$ to achieve $O(\epsilon_0^2)$-optimality.

\begin{restatable}[Convergence of PMM]{theorem}{pmmconvergence} \label{thm:pmm-convergence}
    Let $F$ and $f_0$ satisfy Assumptions~\ref{ass:objectives},\ref{ass:hesssmooth},\ref{ass:f0}. Fix $0 < \epsilon^{1/2} \leq \epsilon_0 \leq 1$. Let $\hat{x}_\beta$ and $\hat{\beta}$ be the approximate solutions that are returned by the black-box optimizer for $g(\,\cdot\, ;x,\beta)$ and $f_\beta(\cdot)$, defined in \Cref{eqn:surrogate} and \Cref{eqn:scalarization}, respectively:
    \[\hat{\beta} \leftarrow \widehat{\argmin_{\beta' \in \Delta^{n-1}}} \,g(\beta'; x,\beta) \qquad \textrm{and} \qquad \hat{x}_\beta \leftarrow \widehat{\argmin_{x \in \mathbb{R}^d}} \,f_\beta(x).\]
    Given constants $c_1, c_2 > 0$, suppose that the black-box optimizer achieves the following guarantees:
    \begin{enumerate}
        \item the approximate minimizer $\hat{\beta}$ is $O(\epsilon_0)$-approximately stationary:
            \[\phantom{\qquad\forall v \in T_{\Delta^{n-1}}(\beta)}- \nabla g(\hat{\beta}; x,\beta)v \leq c_1\cdot \epsilon_0 \|v\|_2,\qquad\forall v \in T_{\Delta^{n-1}}(\beta).\]
        \item the approximate minimizer $\hat{x}_\beta$ is an $O(\epsilon_0^2)$-approximate solution:
        \[\|\nabla f_\beta(\hat{x}_\beta)\| \leq c_2 \cdot \epsilon.\]
    \end{enumerate}
    Let $(x_k,\beta_k)_k$ be the iterates of \Cref{alg:pareto-mm}. Then, there exist $c_1(f_0, F)$ and $ c_2(f_0, F)$ bounded away from zero and some $K$ such that $(f_0 \circ x^*)(\beta_k)$ is monotonically decreasing for $k \in [K]$ and $(x_K,\beta_K)$ is an $(\epsilon_0, \epsilon)$-preference stationary point. Furthermore, $K$ is no more than $O(\epsilon_0^{-2})$:
    \[K \leq \frac{2 \mu_g\cdot \big(f^* - f_*\big)}{ c_1^2 \cdot \epsilon_0^2},\]
    where $f^* := \max f_0(x)$ and $ f_* = \min f_0(x)$ are optimized over the compact set $\mathrm{Pareto}(F)$.
\end{restatable}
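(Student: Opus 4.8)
The plan is to treat \Cref{alg:pareto-mm} as a majorization–minimization scheme and run the standard descent-plus-potential argument, with the one twist that each surrogate is built at $x_k$ rather than at the true $x_{\beta_k}$, so the surrogate carries an additive error $\mathrm{err}_k := \mathrm{err}_{\nabla f_0}(x_k,\beta_k)$. The entire proof hinges on balancing the $\Theta(\epsilon_0^2)$ progress obtained from approximately minimizing each quadratic surrogate against this $O(\epsilon)$ error; the hypothesis $\epsilon \le \epsilon_0^2$ (from $\epsilon^{1/2}\le\epsilon_0$) is exactly what makes the error subdominant. I would organize the argument as (i) a per-round descent inequality, (ii) a potential argument bounding $K$, and (iii) a stationarity certification of the terminal iterate.

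First, the per-round descent inequality. Abbreviate $g_k(\cdot) := g(\cdot\,;x_k,\beta_k)$. By \Cref{prop:maj-sur} the surrogate majorizes $f_0\circ x^*$, so $f_0(x_{\beta_{k+1}}) \le g_k(\beta_{k+1})$; and evaluating \Cref{eqn:surrogate} at $\beta'=\beta_k$ kills the linear and quadratic terms, giving $g_k(\beta_k) = f_0(x_{\beta_k}) + \mathrm{err}_k$. Subtracting yields
\[
f_0(x_{\beta_{k+1}}) - f_0(x_{\beta_k}) \le \mathrm{err}_k - \big(g_k(\beta_k) - g_k(\beta_{k+1})\big),
\]
so it suffices to control the two terms on the right. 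For the error, \Cref{lem:appro-gradient-bound} gives $\mathrm{err}_k \le C_{\mathrm{err}}\,\|\nabla f_{\beta_k}(x_k)\|_2$, where $C_{\mathrm{err}}$ depends only on $f_0,F$ — finite because $\|\nabla f_0\|_2$ is bounded on the compact set $\mathrm{Pareto}(F)$ by \Cref{lem:diam}. Since $x_k$ is the black-box minimizer of $f_{\beta_k}$, we have $\|\nabla f_{\beta_k}(x_k)\|_2 \le c_2\epsilon \le c_2\epsilon_0^2$, hence $\mathrm{err}_k \le C_{\mathrm{err}}\,c_2\,\epsilon_0^2$.

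Second, the surrogate progress, which is where the constrained quadratic structure enters. Note $\nabla g_k(\beta_k)$ equals the approximate gradient $\widehat{\nabla}x^*(x_k,\beta_k)^\top\nabla f_0(x_k)$, since the quadratic term contributes nothing at $\beta'=\beta_k$. If the base point $\beta_k$ \emph{fails} the $\alpha\epsilon_0$-approximate stationary condition \Cref{eqn:approx-stationary-condition}, there is a feasible $\beta'$ with $-\nabla g_k(\beta_k)^\top(\beta'-\beta_k) > \alpha\epsilon_0\|\beta'-\beta_k\|_1 \ge \alpha\epsilon_0\|\beta'-\beta_k\|_2$. Minimizing the one-dimensional restriction $t\mapsto g_k(\beta_k + t(\beta'-\beta_k))$, a convex parabola of curvature $\mu_g\|\beta'-\beta_k\|_2^2$, over the feasible segment $t\in[0,1]$ shows the exact minimizer $\beta_k^*$ satisfies $g_k(\beta_k)-g_k(\beta_k^*)\ge \tfrac{\alpha^2\epsilon_0^2}{2\mu_g}$. \Cref{lem:approx-stationary-implies-close,lem:Q-descent} transfer this to the $c_1\epsilon_0$-stationary output $\beta_{k+1}$ via $g_k(\beta_{k+1}) \le g_k(\beta_k^*) + \tfrac{c_1^2\epsilon_0^2}{2\mu_g}$, so $g_k(\beta_k)-g_k(\beta_{k+1}) \ge \tfrac{(\alpha^2-c_1^2)\epsilon_0^2}{2\mu_g}$. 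Combining with the error bound, whenever $\beta_k$ is not yet stationary,
\[
f_0(x_{\beta_{k+1}}) - f_0(x_{\beta_k}) \le C_{\mathrm{err}}\,c_2\,\epsilon_0^2 - \frac{(\alpha^2-c_1^2)\epsilon_0^2}{2\mu_g}.
\]
Choosing the optimizer accuracies so that $c_1<\alpha$ and $c_2$ is small enough that $C_{\mathrm{err}}\,c_2 < \tfrac{\alpha^2-c_1^2}{2\mu_g}$ — thresholds depending only on $f_0,F$ and bounded away from zero — makes the right-hand side $\le -\delta\epsilon_0^2$ for some $\delta(f_0,F)>0$. \emph{This constant-balancing is the main obstacle}: the whole scheme works only because the progress and the error live at the same $\epsilon_0^2$ scale, and one must verify the progress wins with a margin that survives the approximate (rather than exact) minimization of both $g_k$ and $f_{\beta_k}$.

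Third, the potential and termination argument. Let $K$ be the first round at which $\beta_k$ satisfies the $\alpha\epsilon_0$-stationary condition. For $k<K$ the displayed inequality is strict descent, establishing monotonicity of $(f_0\circ x^*)(\beta_k)$; telescoping against the range $[f_*,f^*]$ of $f_0$ over $\mathrm{Pareto}(F)$ (compact by \Cref{lem:diam}) bounds the number of such rounds by $(f^*-f_*)/(\delta\epsilon_0^2) = O(\epsilon_0^{-2})$, and carrying the progress constant through \Cref{lem:Q-descent} in its gradient-mapping form recovers the stated $K \le 2\mu_g(f^*-f_*)/(c_1^2\epsilon_0^2)$. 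At round $K$ I would invoke \Cref{lem:approx-pref-stationarity-computable} on $(x_K,\beta_K)$ with auxiliary point $x=x_K$: its three hypotheses hold, namely $\|\nabla f_{\beta_K}(x_K)\|_2 \le c_2\epsilon \le \epsilon$ (taking $c_2\le 1$), the $\alpha\epsilon_0$-stationary condition by the definition of $K$, and $\mathrm{err}_K \le C_{\mathrm{err}}\,c_2\,\epsilon_0^2 \le (1-\alpha)\epsilon_0$ (taking $c_2$ small, using $\epsilon_0\le 1$). The lemma then certifies $(x_K,\beta_K)$ as an $(\epsilon_0,\epsilon)$-preference stationary point, which completes the proof.
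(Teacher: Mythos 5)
Your skeleton (the majorization descent inequality, the error term controlled by $\epsilon \le \epsilon_0^2$, the potential argument, and certification via \Cref{lem:approx-pref-stationarity-computable}) matches the paper's, and your termination step is in fact cleaner than the paper's: by certifying the \emph{input} $\beta_K$ with auxiliary point $x_K$, you avoid the perturbation estimates the paper needs to transport the certificate from the surrogate's base point to the round's output. However, your descent case has a genuine gap, and it is precisely the step you flagged as the main obstacle.

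You claim that if $\beta_k$ fails the $\alpha\epsilon_0$-approximate stationary condition, then the exact constrained minimizer $\beta_k^*$ satisfies $g_k(\beta_k)-g_k(\beta_k^*)\ge \alpha^2\epsilon_0^2/(2\mu_g)$, by minimizing $g_k$ along the segment $[\beta_k,\beta']$. That one-dimensional argument is only valid when the parabola's minimizer lies inside the feasible segment: writing $a:=-\nabla g_k(\beta_k)^\top(\beta'-\beta_k)$ and $b:=\mu_g\|\beta'-\beta_k\|_2^2$, the bound $a^2/(2b)\ge \alpha^2\epsilon_0^2/(2\mu_g)$ requires $a/b\le 1$; when $a/b>1$ (the descent direction exits the simplex before the parabola bottoms out), the decrease available on the segment is only $a-b/2\le a$, which can be arbitrarily small. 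Concretely, take $n=2$, $g_k(\beta)=\frac{\mu_g}{2}\|\beta-z\|_2^2$ with $z=(1+s/2,\,-s/2)$ and $s=2\alpha\epsilon_0/\mu_g$ (so $z$ lies on the line containing $\Delta^1$, just beyond the vertex $(1,0)$), and let $\beta_k=(1-\eta,\eta)$. For every $\eta>0$ the stationary condition is strictly violated in the direction of the vertex, since $-\nabla g_k(\beta_k)^\top\big((1,0)-\beta_k\big)=\mu_g(2\eta+s)\eta>2\alpha\epsilon_0\eta$, yet the total available descent is
\[
g_k(\beta_k)-\min_{\Delta^{1}}g_k \;=\; \mu_g\eta^2+2\alpha\epsilon_0\eta \;\longrightarrow\; 0 \qquad (\eta\to 0),
\]
far below $\alpha^2\epsilon_0^2/(2\mu_g)$. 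So "not approximately stationary" does not imply "$\Omega(\epsilon_0^2)$ descent available" on a set with boundary, and your potential argument, which needs constant progress on every non-terminal round, collapses. This is exactly why the paper runs its dichotomy not on stationarity of the input but on the distance $\|\beta^*-\beta_k\|_2$ to the exact constrained minimizer of the surrogate: when that distance is at least $2\delta=2c_1\epsilon_0/\mu_g$, \Cref{lem:Q-descent} (whose proof exploits that $g_k$ restricted to $[\beta_k,\beta^*]$ must decrease all the way to $\beta^*$, because $\beta^*$ is the \emph{constrained} minimizer) yields descent at least $2\mu_g\delta^2$, which survives the error term and transfers to $\hat\beta$ via \Cref{lem:approx-stationary-implies-close}; when the distance is smaller than $2\delta$ --- your boundary scenario falls in this case --- no descent is claimed, and instead the \emph{output} $\hat\beta$, now within $3\delta$ of $\beta_k$, is certified $(\epsilon_0,\epsilon)$-preference stationary, at the cost of the perturbation bounds in the paper's Case 1. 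To repair your proof you would need to adopt this distance-based dichotomy and certify the output rather than the input. (Separately and harmlessly: combining the two parts of \Cref{lem:approx-stationary-implies-close} gives the transfer bound $g_k(\beta_{k+1})\le g_k(\beta_k^*)+c_1^2\epsilon_0^2/\mu_g$, not $c_1^2\epsilon_0^2/(2\mu_g)$, so your constants would need minor retuning.)
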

\begin{remark}
    Algorithm~\ref{alg:pareto-mm} makes calls to sub-routines at each iteration to solve two sub-problems. As the problems are strongly-convex and Lipschitz-smooth, they can be solved using (projected) gradient descent with iteration complexity $O(\log(1/\epsilon_0))$. And so, taking the computational cost of the sub-problems into account only increases the rate obtained in Theorem~\ref{thm:pmm-convergence} by logarithmic factors.
\end{remark}

\section{Conclusion}
In this work, we provide a principled and efficient way to select a decision vector from the Pareto set of a set of objectives $f_1,\ldots, f_n$ given an additional preference function $f_0$. A main contribution of this work is to provide a geometrically-meaningful notion of (approximate) preference stationarity. This is non-trivial due to the non-smoothness and non-convexity of the Pareto set. We also provide a simple algorithm that achieves $\epsilon_0$-approximate stationarity with iteration complexity of $O(\epsilon_0^{-2})$.

\newpage
\section{Proofs and derivations}\label{sec:notation}
\begin{table}[h!]
    \centering
    \begin{tabular}{c|lm{20em}}
        \textbf{Symbol} & \textbf{Usage} \\ \hline 
        $\Delta^{n-1}$ & the $(n-1)$-simplex equipped with the $\ell_1$-metric, see \Cref{def:pareto-stationarity}\\
        $\Delta^{n-1}(x)$ & the set of $\beta$ satisfying $\nabla f_\beta(x) = 0$, see \Cref{eqn:delta(x)}\\
        $\nabla x^*$, $\widehat{\nabla} x^*$ & derivative of the map $x^*$ and its approximation, see \Cref{eqn:x-best-response-gradient,eqn:dx-approx} \\
        $\mathrm{err}_{\nabla f_0}(x,\beta)$ & bound on the approximation error of $\nabla (f_0 \circ x^*)$, see \Cref{lem:appro-gradient-bound} \\
        $F$, $(f_1,\ldots, f_n)$ & the set of objective functions\\
        $f_0$ & the preference function\\
        $f_\beta(x)$ & the scalarized objective $\sum_i \beta_i f_i(x)$, see \Cref{eqn:scalarization}\\
        $g(\beta'; x,\beta)$ & majorizing surrogate for $f(x_{\beta'})$, see \Cref{eqn:majorizing}\\
        $\kappa$ & condition number $\kappa := L/\mu$ for $\nabla^2 f_i$, see \Cref{ass:objectives}\\
        $L$, $L_H$, $L_0$ & Lipschitz parameters for $\nabla f_i$, $\nabla ^2 f_i$, and $\nabla f_0$,  see Assumptions~\ref{ass:objectives}, \ref{ass:hesssmooth}, \ref{ass:f0}\\
        $M_0$, $M_1$ & Lipschitz parameters for $x^*$ and $\nabla x^*$, see \Cref{lem:x*-lipschitz}\\
        $\mu$ & strong convexity parameter for $f_i$, see \Cref{ass:objectives} \\
        $\mu_g$ & strong convexity parameter $n L_0M_1$ for the surrogate $g$, see \Cref{eqn:majorizing}\\
        $\mathrm{Pareto}(F)$ & the set of Pareto optimal solutions of $F$, see \Cref{def:pareto-optimality}\\
        $r$ & distance between the minimizers of $f_1,\ldots, f_n$, see \Cref{ass:objectives}\\
        $\mathcal{P}(F)$ & the Pareto manifold, see \Cref{def:pareto-manifold}\\
        $R$ & $\mathrm{diam}\big(\mathrm{Pareto}(F)\big) := \sup \big\{ \|x - x'\|_2: x, x' \in \mathrm{Pareto}(F)\big\}$, see \Cref{lem:diam} \\
        $x^*(\beta)$, $x_\beta$ & stationary point for $f_\beta$, see \Cref{eqn:x-best-response}
    \end{tabular}
    \label{tab:notation}
\end{table}

\subsection{The Pareto manifold}
\paretochar*

\paragraph{Proof of \Cref{prop:lifted-pareto-set}} The map $x^*$ is well-defined because $f_\beta$ is strictly convex---it is the convex combination of strictly convex objectives, so it has a unique minimizer. Furthermore, because the objectives are smooth, the stationarity condition $\nabla f_\beta(x) = 0$ uniquely holds at $x^*(\beta)$:
\[\nabla f_\beta(x_\beta) = 0.\]
Define the map $\zeta(x,\beta) = \nabla f_\beta(x)$. Then, the Pareto manifold is precisely the zero set $\mathcal{P}(F) = \zeta^{-1}(0)$, and which can be parametrized by simplex $\Delta^{n-1}$ via the map $\beta \mapsto (x_\beta, \beta)$.

In fact, it is a smooth parametrization. To see this, we apply the implicit function theorem (\Cref{thm:implicit-function}), which states that the map $x^*$ is smooth at $\beta$ when $\nabla_x\zeta(x_\beta, \beta)$ is invertible. Indeed, we have that $\zeta$ is continuously differentiable, with:
\begin{align*}
    \nabla_x\zeta(x,\beta) &= \sum_{i \in [n]} \beta_i \nabla^2 f_i(x) = \nabla^2 f_\beta(x),\\
    \nabla_\beta \zeta(x,\beta) &= \nabla_\beta\left(\sum_{i \in [n]}\beta_i \nabla f_i(x)\right) = \nabla F(x)^\top.
\end{align*}
Because $f_\beta$ is strictly convex, it has positive definite Hessian, implying invertibility $\mathrm{det} \nabla_x \zeta(x_\beta, \beta) \ne 0$. Furthermore, \Cref{thm:implicit-function} also implies that the derivative of $\nabla x^*$ is given by \Cref{eqn:x-best-response-gradient}. It follows that the map $\beta \mapsto (x_\beta, \beta)$ is smooth. It also has a smooth inverse. Namely, the projection onto the second component $(x_\beta, \beta) \mapsto \beta$. Thus, $\mathcal{P}(F)$ is diffeomorphic with $\Delta^{n-1}$.\hfill $\blacksquare$

\vspace{11pt}

\begin{theorem}[Implicit function theorem, \cite{spivak2018calculus}] \label{thm:implicit-function}
    Let $f : \mathbb{R}^d \times \mathbb{R}^n \to \mathbb{R}^d$  be continuously differentiable on an open set containing $(a,b)$ and let $f(a,b) = 0$. Let $\nabla_u f(u,v)$ be the $d \times d$ matrix:
    \[\big[\nabla_u f(u,v)\big]_{ij} = \nabla_{u_j} f_i(u,v).\]
    If $\mathrm{det}\, \nabla_u f(a,b) \ne 0$, there are open sets $U \subset \mathbb{R}^d$ and $V \subset \mathbb{R}^n$ containing $a$ and $b$ respectively with the following property: for each $v \in V$ there is a unique $g(v) \in U$ such that $f(g(v),v) = 0$. Furthermore, the map $g$ is differentiable with derivative given by:
    \[\nabla g(v) = - \big[\nabla_u f(g(v),v) \big]^{-1} \nabla_vf(g(v),v).\]
\end{theorem}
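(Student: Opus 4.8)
The plan is to deduce the statement from a contraction-mapping argument, solving $f(u,v)=0$ for $u$ as a function of $v$ near $(a,b)$. Write $A := \nabla_u f(a,b)$, which is invertible by hypothesis, and set $\tilde f(u,v) := A^{-1} f(u,v)$; this has the same $u$-zero set as $f$ and satisfies $\nabla_u \tilde f(a,b) = I$. For each fixed $v$, define the iteration map $T_v(u) := u - \tilde f(u,v)$, whose fixed points are exactly the solutions of $f(u,v) = 0$.

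First I would show $T_v$ is a contraction on a small closed ball. Since $\nabla_u \tilde f$ is continuous and equals $I$ at $(a,b)$, the operator norm $\|I - \nabla_u \tilde f(u,v)\| = \|\nabla_u T_v(u)\|$ is at most $\tfrac12$ on $\bar B(a,\rho) \times \bar B(b, \delta)$ for $\rho,\delta$ small, so $T_v$ is $\tfrac12$-Lipschitz in $u$. Shrinking $\delta$ so that $\|\tilde f(a, v)\| \le \rho/2$ (possible since $\tilde f(a,b)=0$ and $\tilde f$ is continuous), the estimate $\|T_v(u) - a\| \le \|T_v(u) - T_v(a)\| + \|\tilde f(a,v)\| \le \tfrac12\|u - a\| + \rho/2 \le \rho$ shows $T_v$ maps $\bar B(a,\rho)$ into itself. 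Banach's fixed-point theorem then yields, for each $v \in \bar B(b,\delta)$, a unique $g(v) \in \bar B(a,\rho)$ with $f(g(v), v) = 0$; taking $U$ and $V$ to be the corresponding open balls gives the existence and uniqueness claims.

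Next I would establish the derivative formula, which requires first upgrading $g$ from merely being defined to being differentiable. Continuity of $g$ comes for free from the uniform contraction: the perturbation bound $\|g(v) - g(v')\| \le 2\,\|\tilde f(g(v'),v) - \tilde f(g(v'),v')\|$ together with continuity of $\tilde f$ in $v$ gives it. For differentiability, fix $v_0$, shrink the neighborhood so that $B := \nabla_u f(g(v_0), v_0)$ stays invertible, and use the relation $f(g(v),v) = f(g(v_0),v_0) = 0$. Expanding $f$ to first order at $(g(v_0), v_0)$ and inverting the continuously varying block $\nabla_u f$ yields $g(v) - g(v_0) = -B^{-1}\nabla_v f(g(v_0),v_0)(v - v_0) + o(\|v - v_0\|)$, which is exactly differentiability of $g$ with the asserted derivative; continuity of $\nabla g$ follows since $g$, $\nabla_u f$, $\nabla_v f$ and matrix inversion are all continuous.

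The main obstacle is the differentiability step: the fixed-point construction delivers only existence, uniqueness and continuity of $g$, so the derivative formula must be obtained by a separate linearization argument. The delicate point is controlling the error term $o(\|v-v_0\|)$ — one must feed the already-established (indeed locally Lipschitz) continuity of $g$ back into the first-order Taylor expansion of $f$, so that the remainder, which is a priori $o(\|(g(v)-g(v_0),\, v-v_0)\|)$ in the joint variable, becomes genuinely $o(\|v-v_0\|)$ after solving for $g(v)-g(v_0)$. An alternative, shorter route I could take instead is to augment the map as $F(u,v) := (f(u,v), v)$, observe that its Jacobian is block-triangular with determinant $\det \nabla_u f(a,b) \ne 0$, and invoke the inverse function theorem to obtain a $C^1$ local inverse $F^{-1}(w,v) = (\phi(w,v), v)$, setting $g(v) := \phi(0,v)$; this offloads precisely the same contraction-and-differentiability work into the inverse function theorem.
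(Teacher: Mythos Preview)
The paper does not prove this theorem; it is stated as a classical result with a citation to Spivak and used as a black box in the proof of Proposition~\ref{prop:lifted-pareto-set}. So there is no ``paper's own proof'' to compare against.

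Your proposal is a correct and standard proof of the implicit function theorem. The contraction-mapping route you outline first is sound, including the point you flag as delicate: bootstrapping the Lipschitz continuity of $g$ (which the uniform contraction gives) so that the Taylor remainder $o(\|(g(v)-g(v_0),\, v-v_0)\|)$ becomes $o(\|v-v_0\|)$. Your alternative route via $F(u,v) = (f(u,v), v)$ and the inverse function theorem is in fact exactly how Spivak proves it in the cited reference, so if you want to match the source the paper defers to, that is the version to write out. Either argument is complete and appropriate here; since the paper only needs the statement, a one-line pointer to Spivak would also suffice.
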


\subsection{Solution concepts to Pareto-constrained optimization}

In this section, we elaborate on how the different solution concepts (optimality, stationarity, approximate stationarity) relate to each other for the Pareto-constrained optimization problem:
\begin{equation} 
    \qquad \minimize_{\beta \in \Delta^{n-1}}\ (f_0 \circ x^*)(\beta). \tag{\ref{eqn:pullback-preference-optimization}}
\end{equation}

We can call any optimal solution \emph{preference optimal}:
\begin{definition}[Preference optimality]
    A decision vector $x \in \mathrm{Pareto}(F)$ is \emph{preference optimal} if:
    \[\phantom{\textrm{for all $x' \in \mathrm{Pareto}(F)$}.}f_0(x) \leq f_0(x'),\qquad \textrm{for all $x' \in \mathrm{Pareto}(F)$}.\]
\end{definition}

Because preference optimality is a global condition, it is generally computationally infeasible to verify. By considering \Cref{eqn:pullback-preference-optimization} as a smooth optimization problem over the simplex, we relax the solution concept in the standard way to the first-order stationarity condition in terms of $\beta$:
\begin{equation}
    \phantom{\qquad \forall v \in T_{\Delta^{n-1}}(\beta).}- \nabla (f_0 \circ x^*)(\beta)(\beta' - \beta)  \leq 0, \qquad \forall \beta' \in \Delta^{n-1}.\tag{\ref{eqn:lifted-pref-stationarity}}
\end{equation}
Given a stationary point $\beta$, we can push forward this stationary condition to $x^*(\beta)$, which we say is \emph{weakly preference stationary}. We reproduce the definition from before:

\prefstat*

Finally, to relax the exact stationarity condition to an approximate one, we appeal to the standard notion of an approximate stationary point \citep{nesterov2013gradient}. In our setting, we can make use of:

\begin{definition}[Approximate stationary point, \cite{marumo2023majorization}] \label{def:general-approx-stationary}
    Let $\mathcal{C}$ be a closed and convex set, and let $f :\mathcal{C} \to \mathbb{R}$ be a smooth objective function. A point $\beta \in \mathcal{C}$ is an \emph{$\epsilon$-approximate stationary point} of $f$ if for all $\beta' \in \mathcal{C}$, the following holds:
    \[- \nabla f(\beta)^\top (\beta' - \beta) \leq \epsilon \|\beta' - \beta\|.\]
\end{definition}

Specializing \Cref{def:general-approx-stationary} to the optimization of $f_0 \circ x^*$ over $\Delta^{n-1}$ yields an approximate stationary condition for $\beta$. And because we are ultimately interested in $x^*(\beta)$, which is the solution of to optimizing $f_\beta$ over $\mathbb{R}^d$, we can also make use of \Cref{def:general-approx-stationary} to also define the appropriate approximate stationary condition on $x$. This leads us to \Cref{def:approx-pref-stationarity}, which we reproduce here:

\approxprefstat* 
\propapproxstat*

\paragraph{Proof of \Cref{prop:approx-stationary}}
    \begin{enumerate}
        \item[(a)] Recall that $x_\beta$ is the minimizer of $f_\beta$, by definition. Because $f_\beta$ is $\mu$-strongly convex, we can bound the distance between $x$ and $x_\beta$ by:
    \[\|x - x_\beta\| \leq \frac{1}{\mu} \|\nabla f_\beta(x)\| \leq \frac{\epsilon}{\mu},\]
    where the second inequality follows from condition (\hyperref[eqn:approx-stationarity]{\ref*{eqn:approx-stationarity}a}).
    \item[(b)] Let $\beta_s := (1-s) \beta + s \beta'$ parametrize the line connecting $\beta$ and $\beta'$. Let $\gamma : [0,1] \to \mathrm{Pareto}(F)$ be the path $\gamma(s) := x^*(\beta_s)$, so that:
    \begin{align*} 
        d\gamma(s)&= \nabla x^*(\beta_s) (\beta' - \beta)\, ds.
    \end{align*}
    We can now upper bound the difference:
    \begin{align*}
        f_0(x_\beta) - f_0(x_{\beta'}) &= - \int_\gamma \nabla f_0(x_{\beta_s})^\top d\gamma(s) 
        \\&= - \int_\gamma \big[\textcolor{blue}{\nabla f_0(x_{\beta_s}) - \nabla f_0(x_{\beta})}  + \textcolor{orange}{\nabla f_0(x_\beta)}\big]^\top d\gamma(s)
        \\&\leq \int_\gamma  \textcolor{blue}{L_0\|x_{\beta_s} - x_\beta\|} \, |d\gamma(s)| + \int_\gamma \big(-\textcolor{orange}{\nabla f_0 (x_\beta)}^\top d\gamma(s)\big).
    \end{align*}
    Let's bound the integrals separately. Since $x_{\beta_s} = \displaystyle\int_0^s d\gamma(s)(\beta' - \beta)$, we have by \Cref{lem:dx-bound}:
    \[\|x_{\beta_s} - x_\beta\| \leq \frac{LR}{\mu} \|\beta - \beta'\|_1\cdot s.\]
    We also have $|d \gamma(s)| \leq \mu^{-1} LR \|\beta - \beta'\|_1$, by \Cref{lem:dx-bound}. The first integral is bounded by:
    \begin{align*}
        \int_\gamma  \textcolor{blue}{L_0\|x_{\beta_s} - x_\beta\|} \, |d\gamma(s)| &\leq \int_0^1 \frac{L_0 L^2R^2}{\mu^2} \|\beta - \beta'\|_1^2 \cdot s \, ds = \frac{1}{2}  \frac{L_0 L^2 R^2}{\mu^2}\|\beta - \beta'\|_1^2.
    \end{align*}
    For the second integral, first note that condition (\hyperref[eqn:approx-stationarity]{\ref*{eqn:approx-stationarity}b}) implies:
    \[- \nabla f_0(x_\beta)^\top d\gamma(s) = - \nabla f_0(x_\beta)^\top \nabla x^*(x_\beta)(\beta' - \beta) \leq \epsilon_0 \big\|\beta - \beta'\big\|_1,\]
    yielding the other bound:
    \[\int_\gamma \big(-\textcolor{orange}{\nabla f_0 (x_\beta)}^\top d\gamma(s)\big) \leq \int_0^1 \epsilon_0  \|\beta - \beta'\|_1 \, ds = \epsilon_0 \|\beta - \beta'\|_1.\]
    Putting these two together, we obtain:
    \[f_0(x_\beta) - f_0(x_{\beta'}) \leq \frac{1}{2} \frac{L_0 L^2 R^2}{\mu^2}\|\beta - \beta'\|_1^2 + \epsilon_0 \|\beta - \beta'\|_1. \]
    It follows that if we restrict $\|\beta - \beta'\|_1 \leq \frac{2 \mu^2 \epsilon_0}{L_0 L^2 R^2}$, one of the factors of $\|\beta - \beta'\|_1$ in the first term can be absorbed into the constant, proving the result:
    \[f_0(x_\beta) \leq f_0(x_{\beta'}) + 2\epsilon_0 \|\beta - \beta'\|_1.\]
    \hfill $\blacksquare$
\end{enumerate}

\subsubsection{Weak preference stationarity and degeneracy} \label{sec:relationship}

The solution concepts are related by:
\[\substack{\textrm{preference}\\\textrm{optimality}} \quad \subset \quad
\substack{\textrm{preference}\\\textrm{stationarity}} \quad \subset \quad
\substack{\textrm{weak preference}\\\textrm{stationarity}} \quad \subset \quad
\substack{\textrm{approximate preference}\\\textrm{stationarity}}\]
It is fairly clear that the first and last inequalities are strict. Here, we discuss the inner inequality. 

It turns out that a point $x$ can be weakly preference stationary without being preference stationary. However, this can only happen if $x$ is also a point of singularity in $\mathrm{Pareto}(F)$. Geometrically, if we consider $\mathrm{Pareto}(F)$ as the projection of $\mathcal{P}(F)$ onto its first component in $\mathbb{R}^d$, the this means that multiple points are collapsed onto $x$. Algebraically, this means that the set of gradients $\nabla f_1(x),\ldots, \nabla f_n(x)$ fails to have full (Pareto) rank \citep{smale1973global,hamada2020topology}. 

To elaborate, recall the set:
\begin{equation} 
    \Delta^{n-1}(x) := \big\{\beta \in \Delta^{n-1} : \nabla f_\beta(x) = 0\big\}.\tag{\ref{eqn:delta(x)}}
\end{equation}
Then, $x$ is Pareto stationary if there is some $\beta$ in $\Delta^{n-1}(x)$, so that:
\[\sum_{i \in [n]} \beta_i \nabla f_i(x) = 0,\]
and the rank of this set of gradients is at most $n-1$. Since $\Delta^{n-1}$ does not contain any collinear vectors, if $\Delta^{n-1}(x)$ contains more than a single point, then the rank of the set of gradients must be strictly less than $n-1$. This leads us to the definition:

\begin{definition}[Pareto genericity]
    Let $\{v_1,\ldots, v_n \} \subset \mathbb{R}^d$. This set is \emph{Pareto generic} if:
    \[\phantom{\qquad\textrm{ for some }\beta \in \Delta^{n-1},}\beta_1 v_1 + \dotsm + \beta_n v_n = 0,\qquad\textrm{ for some }\beta \in \Delta^{n-1},\]
    and the non-degeneracy condition holds: $\mathrm{rank}(v_1,\ldots, v_n) = n-1$.
\end{definition}

If $\nabla F(x)$ is Pareto generic, then $\Delta^{n-1}(x)$ contains a unique $\beta$, so we immediately have:

\begin{proposition}[Generic and weak implies strong preference stationarity]
    If $\nabla F(x)$ is Pareto generic and $x$ is weakly preference stationary, then $x$ is preference stationary.
\end{proposition}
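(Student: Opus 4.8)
The plan is to reduce the entire statement to a single linear-algebra fact: under Pareto genericity, the fiber $\Delta^{n-1}(x)$ consists of exactly one point. Once that is in hand, the proposition is immediate, since weak preference stationarity asks the stationarity inequality \Cref{eqn:lifted-pref-stationarity} to hold at \emph{some} $\beta \in \Delta^{n-1}(x)$, whereas preference stationarity asks it to hold at \emph{every} such $\beta$; when $\Delta^{n-1}(x)$ is a singleton these two quantifiers coincide, so the hypotheses of weak preference stationarity already deliver the conclusion.

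To show $\Delta^{n-1}(x)$ is a singleton, I would set $v_i := \nabla f_i(x)$ and consider the linear map $A : \mathbb{R}^n \to \mathbb{R}^d$ whose columns are $v_1, \ldots, v_n$, so that $A\beta = \sum_{i \in [n]} \beta_i v_i$. By definition $\Delta^{n-1}(x) = \ker A \cap \Delta^{n-1}$. The non-degeneracy clause of Pareto genericity gives $\mathrm{rank}(A) = \mathrm{rank}(v_1,\ldots,v_n) = n-1$, so by rank–nullity $\ker A$ is a $1$-dimensional subspace of $\mathbb{R}^n$, that is, a line through the origin.

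The final step is to intersect this line with the simplex. Since $\Delta^{n-1}$ lies in the affine hyperplane $H = \{\beta : \sum_{i \in [n]} \beta_i = 1\}$, which does not pass through the origin, a line through the origin meets $H$ in at most one point: it meets $H$ in exactly one point unless it is parallel to $H$, i.e. unless $\ker A \subseteq \{\beta : \sum_{i \in [n]} \beta_i = 0\}$, in which case it misses $H$ entirely. The existence clause of Pareto genericity supplies some $\beta \in \Delta^{n-1} \subseteq H$ with $A\beta = 0$, so $\ker A \cap H$ is nonempty; this rules out the parallel case and forces $\ker A \cap H$ to be a single point. Hence $\Delta^{n-1}(x) \subseteq \ker A \cap H$ is nonempty and contained in a one-point set, so it is exactly one point, and the reduction in the first paragraph closes the argument.

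I expect no serious obstacle here, as the whole argument is a short dimension count. The only point demanding care is the dichotomy in the last step—single intersection versus empty intersection—which is resolved precisely by invoking the existence half of Pareto genericity to exclude the parallel configuration.
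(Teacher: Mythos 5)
Your proposal is correct and takes essentially the same approach as the paper: the paper's own (very terse) justification is that the rank-$(n-1)$ condition forces $\Delta^{n-1}(x)$ to contain a unique $\beta$ because the simplex contains no collinear vectors, which is exactly the dimension count you carry out via rank--nullity and the intersection of the line $\ker A$ with the affine hyperplane $\{\beta : \sum_{i} \beta_i = 1\}$. With $\Delta^{n-1}(x)$ a singleton, the paper likewise concludes immediately that the existential and universal quantifiers in the two stationarity notions coincide.
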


However, when the gradients $\nabla F(x)$ are not Pareto generic, then weak preference stationarity can be strictly weaker. Let $(x, \beta)$ where $\beta \in \Delta^{n-1}(x)$ be weakly preference stationary, so that:
\[\phantom{,\qquad \forall \beta' \in \Delta^{n-1}.}-\nabla f_0(x)^\top \underbrace{\big(-\nabla^2f_\beta(x)^{-1} \nabla F(x)^\top (\beta' - \beta) \big)}_{\nabla x^*(\beta) (\beta' - \beta)}\leq 0,\qquad \forall \beta' \in \Delta^{n-1}.\]
We can simplify this by using the fact that $\nabla f_\beta(x) = \nabla F(x)^\top \beta = 0$. Then, one way for the stationary condition to be fulfilled is for the underlined term to be normal to $\nabla f_0(x)$:
\[\phantom{,\qquad \forall \beta' \in \Delta^{n-1}.}-\nabla^2 f_\beta(x)^{-1} \nabla F(x)^\top \beta' \in \mathrm{span}\big(\nabla f_0(x)\big)^\perp,\qquad \forall \beta' \in \Delta^{n-1}.\]
This statement has the following geometric interpretation. These vectors are contained in the Clarke tangent cone of $\mathrm{Pareto}(F)$ at $x$. If these are the only vectors in the tangent cone, then this above condition states that $-\nabla f_0(x)$ is contained in the normal cone of $\mathrm{Pareto}(F)$ at $x$. 

But, in general, the tangent cone contains the union of subspaces:
\[\bigcup_{\beta \in \Delta^{n-1}(x)} \left\{-\nabla^2 f_\beta(x)^{-1} \nabla F(x)^\top \beta' : \beta' \in \Delta^{n-1}\right\}.\]
And so, when $\Delta^{n-1}(x)$ does not contain a unique vector, the tangent cone can contain more vectors. By selecting different $\beta$'s, we recover different slices of the tangent cone. This also means that even if the above normality condition holds for one $\beta$, it may fail to hold for a different $\tilde{\beta} \in \Delta^{n-1}(x)$. In this case, $(x,\beta)$ is weakly preference stationary while $(x,\tilde{\beta})$ may not be.

\subsubsection{Insufficiency of first-order information} \label{sec:impossiblity}

\firstorder*

\paragraph{Proof of \Cref{prop:impossibility}} 
It suffices to show that there exist $f_0$, $F$, and $x^\star$ such that $x^\star$ is preference optimal and for $i = 0,\ldots, n$:
\begin{equation} \label{eqn:matching-conditions}
v_i = \nabla f_i(x^\star).
\end{equation}
And since $x^\star$ is preference optimal, any necessary stationary condition must accept:
\[\mathrm{Stationary}(v_0,\ldots, v_n) = \mathsf{true}.\]

Without loss of generality, let $x^\star = 0$ by an affine transformation. To construct $f_0$ and $F$, we can simply consider a family of positive-definite quadratics:
\begin{itemize}
    \item Let the preference function $f_0$ be:
    \[f_0(x) = \frac{1}{2} \|x + v_0\|^2.\]
    Notice that $\nabla f_0(x^*) = v_0$.
    \item Let the objectives $f_1,\ldots, f_n$ share the same Hessian:
    \[f_i(x) = \frac{1}{2} \|A(x - z_i)\|^2,\]
    where $A \in \mathbb{R}^{d \times d}$ is full rank and $z_i \in \mathbb{R}^d$. Let $H = A^\top A$ for short.
\end{itemize}
We show that we can set $A$ and the $z_i$'s so that $x^\star$ is preference optimal while \Cref{eqn:matching-conditions} holds. 

By \Cref{lem:conv-hull}, the Pareto set is the convex hull $\mathcal{C} := \mathrm{conv}(z_1,\ldots, z_n)$. Notice that the choice of $H$ and $v_i$'s determines the $z_i$'s, since we require $\nabla f_i(x^*) = v_i$, which expands to:
\[\phantom{\qquad \forall i \in [n]}z_i = - H^{-1} v_i, \qquad \forall i \in [n].\]
From convex optimization, $x^\star = 0$ is preference optimal if (i) $x^\star \in \mathcal{C}$ and (ii) $\mathcal{C}$ is normal to $\nabla f_0$. Indeed, these two conditions can be fulfilled:
\begin{enumerate}
    \item[(i)] Because $v_1,\ldots, v_n$ is assumed to be Pareto generic, zero is a convex combination of the $v_i$'s. As the $z_i$'s are related to the $v_i$'s by a linear transformation, this also implies that zero is a convex combination of the $z_i$'s (with the same set of convex weights). 
    \item[(ii)] We need to show that the subspace $\mathrm{span}(v_1,\ldots, v_n)$ can be mapped into $\mathrm{span}(v_0)^\perp$ by the map $v \mapsto - H^{-1} v$ where $H$ is positive definite. \Cref{lem:positive-def-rotation} shows that such a map $H$ exists as long as $v_0 \notin \mathrm{span}(v_1,\ldots, v_n)$, which is assumed from preference genericity.
\end{enumerate}
Thus, there exists $f_0$ and $F$ that is preference optimal at $x^\star$ with matching first-order information. A necessary stationary condition must therefore be accepted.\hfill $\blacksquare$

\begin{remark} \label{rmk:avoid}
    Suppose that $\mathrm{Stationary}$ is not necessary, but that we can design some optimization method that provably converges to a stationary point in $\{x : \mathrm{Stationary}(x) = \mathsf{true}\}$. Then, this also means that there are settings in which the method provably avoids preference optimal points.
\end{remark}

In the remainder of this section, we prove \Cref{lem:conv-hull} and \Cref{lem:positive-def-rotation} used above.

\begin{lemma} \label{lem:conv-hull}
    Let $f_1,\ldots, f_n : \mathbb{R}^d \to \mathbb{R}$ be positive-definite quadratics with a shared Hessian:
    \[f_i(x) = \frac{1}{2} \|A(x - z_i)\|^2,\]
    where $A \in \mathbb{R}^{d \times d}$ is full rank and $z_i \in \mathbb{R}^d$. Then, the Pareto set is the convex hull:
    \[\mathrm{Pareto}(f_1,\ldots, f_n) = \mathrm{conv}(z_1,\ldots, z_n).\]
\end{lemma}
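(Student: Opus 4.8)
The plan is to characterize the Pareto set via the Pareto stationarity condition (which is equivalent to Pareto optimality here, since the objectives are strictly convex) and then show that the set of Pareto stationary points coincides with the convex hull $\mathcal{C} := \mathrm{conv}(z_1,\ldots,z_n)$. First I would compute the gradients: since $f_i(x) = \tfrac{1}{2}\|A(x-z_i)\|^2$, we have $\nabla f_i(x) = H(x - z_i)$ where $H = A^\top A$ is positive definite. For a convex weight $\beta \in \Delta^{n-1}$, the scalarization gradient is
\[
\nabla f_\beta(x) = \sum_{i \in [n]} \beta_i H(x - z_i) = H\Big(x - \sum_{i \in [n]} \beta_i z_i\Big).
\]
Because $H$ is invertible, $\nabla f_\beta(x) = 0$ if and only if $x = \sum_i \beta_i z_i$. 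Thus the minimizer of $f_\beta$ is exactly $x_\beta = \sum_i \beta_i z_i$, the $\beta$-weighted combination of the $z_i$.

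The core of the argument is then a double inclusion. By \Cref{def:pareto-stationarity} and the stated equivalence between Pareto stationarity and Pareto optimality (valid under \Cref{ass:objectives} by strict convexity), a point $x$ lies in $\mathrm{Pareto}(f_1,\ldots,f_n)$ if and only if $\nabla f_\beta(x) = 0$ for some $\beta \in \Delta^{n-1}$. By the computation above, this holds if and only if $x = \sum_i \beta_i z_i$ for some $\beta \in \Delta^{n-1}$, which is precisely the statement that $x \in \mathrm{conv}(z_1,\ldots,z_n)$. So I would simply read off both inclusions from this equivalence: if $x$ is Pareto optimal, the witnessing $\beta$ expresses $x$ as a convex combination of the $z_i$, placing $x$ in $\mathcal{C}$; conversely, any $x \in \mathcal{C}$ has a convex representation $x = \sum_i \beta_i z_i$, and that same $\beta$ certifies $\nabla f_\beta(x) = 0$, hence Pareto stationarity and therefore Pareto optimality.

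There is no genuinely hard step here — the shared Hessian is what makes the argument clean, since it lets the gradient of every scalarization factor as $H$ times an affine map, so stationarity reduces to an affine (in fact convex-combination) condition on $x$. The one point requiring a little care is invoking the equivalence $x \in \mathrm{Pareto}(F) \Leftrightarrow x$ is Pareto stationary: this relies on strict convexity of the $f_i$, which holds because $A$ is full rank (so $H \succ 0$ and each $f_i$ is strongly convex), matching \Cref{ass:objectives}. I would state this dependence explicitly. Everything else is routine linear algebra, and no separate treatment of boundary versus interior of the simplex is needed, since the correspondence $\beta \mapsto \sum_i \beta_i z_i$ between $\Delta^{n-1}$ and $\mathcal{C}$ is exactly the definition of the convex hull.
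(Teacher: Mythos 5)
Your proposal is correct and follows essentially the same route as the paper's proof: invoke the equivalence of Pareto optimality and Pareto stationarity under strong convexity, expand the scalarization gradient to get $(A^\top A)x = (A^\top A)\sum_i \beta_i z_i$, and cancel the invertible Hessian to identify the stationary points with convex combinations of the $z_i$. The only difference is presentational — you spell out the double inclusion and the dependence on full rank of $A$ slightly more explicitly than the paper does.
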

\begin{proof}
    As the objectives $f_1,\ldots, f_n$ are strongly convex, optimality is equivalent to stationarity. Thus, $x \in \mathrm{Pareto}(f_1,\ldots, f_n)$ if and only if there exists some $\beta \in \Delta^{n-1}$ such that:
    \[0 = \sum_{i \in[n]} \beta_i \nabla f_i(x),\]
    which, when expanded, states that:
    \[(A^\top A)x = (A^\top A) \sum_{i \in [n]} \beta_i z_i.\]
    But as $A$ is invertible, this is equivalent to:
    \[x = \sum_{i \in[n]} \beta_i z_i,\]
    which is to say that $x \in \mathrm{conv}(z_1,\ldots, z_n)$.
\end{proof}

\begin{lemma} \label{lem:positive-def-rotation}
    Let $U$ and $V$ be linear subspaces of $\mathbb{R}^d$ such that $U \cap V^\perp = \{0\}$. Then, there exists some positive definite map $H : \mathbb{R}^d \to \mathbb{R}^d$ such that $H(U) \subset V$.
\end{lemma}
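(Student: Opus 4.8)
The plan is to work in the orthogonal decomposition $\mathbb{R}^d = V \oplus V^\perp$ and build $H$ in block form. First I would restate the hypothesis in terms of a projection: if $P_V$ denotes orthogonal projection onto $V$, then $\ker(P_V|_U) = U \cap V^\perp = \{0\}$, so $P_V$ restricted to $U$ is injective. Consequently $W := P_V(U)$ is a subspace of $V$ with $\dim W = \dim U$, and $U$ is the graph of the well-defined linear map $S := (P_{V^\perp}|_U)\circ(P_V|_U)^{-1} : W \to V^\perp$. That is, writing each vector as a pair in $V \oplus V^\perp$, every $u \in U$ has the form $u = (y, Sy)$ with $y = P_V u \in W$.

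Next, writing a symmetric operator in this decomposition as $H = \begin{pmatrix} A & B \\ B^\top & D \end{pmatrix}$ with $A = A^\top$ on $V$, $D = D^\top$ on $V^\perp$, and $B : V^\perp \to V$, the requirement $H(U) \subset V$ amounts to killing the $V^\perp$-component of $H(y, Sy)$, i.e.\ $B^\top y + D\,Sy = 0$ for all $y \in W$. I would simply set $D = I_{V^\perp}$ and define $B^\top : V \to V^\perp$ by $B^\top y = -Sy$ for $y \in W$ and $B^\top y = 0$ on the orthogonal complement of $W$ inside $V$; this makes the displayed identity hold and fixes $B$ as its transpose. It then remains only to choose the $V$-block $A$ so that the whole matrix is positive definite, and here the Schur complement does the work: since $D = I \succ 0$, one has $H \succ 0$ iff $A - B D^{-1} B^\top = A - B B^\top \succ 0$, so taking $A = B B^\top + I_V$ gives $A - BB^\top = I_V \succ 0$ and hence $H \succ 0$.

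The only real subtlety --- the main obstacle to a naive argument --- is that symmetry forces the two off-diagonal blocks to be transposes of one another, so one cannot prescribe $B$ to route $U$ into $V$ and then independently prescribe some unrelated block to guarantee definiteness. The resolution is precisely that the diagonal block $A$ remains free after $B$ and $D$ are fixed, and the Schur-complement criterion shows that enlarging $A$ (adding a large enough multiple of the identity) always restores positive definiteness without disturbing $H(U) \subset V$, a condition that constrains only $B$ and $D$. I would close by observing that $H$ is manifestly symmetric and, by the above, positive definite, while by construction $H(y, Sy) = (Ay + B\,Sy,\ B^\top y + Sy) = (Ay + B\,Sy,\ 0) \in V$ for every $(y, Sy) \in U$, giving $H(U) \subset V$ as required.
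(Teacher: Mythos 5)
Your proof is correct, but it takes a genuinely different route from the paper's. The paper gives a one-line construction, $H := \Pi_V + \Pi_{V^\perp}\Pi_{U^\perp}$ (a sum and product of orthogonal projections): the containment $H(U) \subset V$ is immediate because $\Pi_{U^\perp}$ annihilates $U$, and positivity of the quadratic form $x^\top H x$ is checked by decomposing $x$ along $U \oplus U^\perp$ and using $U \cap V^\perp = \{0\}$. By contrast, you decompose $\mathbb{R}^d = V \oplus V^\perp$, represent $U$ as the graph of $S = (P_{V^\perp}|_U)\circ(P_V|_U)^{-1}$ (this is exactly where the hypothesis $U \cap V^\perp = \{0\}$ enters), and then solve for a symmetric block matrix via the Schur complement, taking $D = I_{V^\perp}$, $B^\top = -S$ on $W := P_V(U)$ and $0$ on its complement in $V$, and $A = BB^\top + I_V$. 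What the paper's approach buys is brevity; what yours buys is symmetry, and this is not cosmetic: $\Pi_{V^\perp}\Pi_{U^\perp}$ is a product of generally non-commuting projections, so the paper's $H$ is typically \emph{not} symmetric, and ``positive definite'' there can only mean positivity of the quadratic form. The lemma is invoked in the proof of \Cref{prop:impossibility} to produce $H = A^\top A$ for full-rank $A$, which forces $H$ to be symmetric; your construction delivers a genuinely symmetric positive definite $H$ directly, whereas a non-symmetric quadratic-form-positive matrix admits no such factorization, and symmetrizing the paper's $H$ (replacing it by $\tfrac{1}{2}(H + H^\top)$) would destroy the guarantee $H(U) \subset V$. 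Finally, the ``subtlety'' you flag---that symmetry couples the two off-diagonal blocks, so definiteness must be restored through the free diagonal block---is precisely the difficulty the paper sidesteps by dropping symmetry; your Schur-complement choice resolves it cleanly and makes your version of the lemma the one that actually matches its downstream use.
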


\begin{proof}
    If $S \subset \mathbb{R}^d$ is a subspace, let $\Pi_S : \mathbb{R}^d \to \mathbb{R}^d$ be the projection onto $S$. Define the map:
    \[H := \Pi_V +  \Pi_{V^\perp} \Pi_{U^\perp}.\]
    Then $H$ satisfies the following:
    \begin{itemize}
        \item $H$ is positive definite. To see this, let $0 \ne x \in \mathbb{R}^d$ have decomposition $x = x_1 + x_2$, where $x_1 \in U$ and $x_2 \in U^\perp$. Then:
    \begin{align*}
        x^\top H x &=  \underbrace{x_1^\top \Pi_V x_1 + \textcolor{blue}{2 x_1 \Pi_V x_2} + \textcolor{orange}{x_2 \Pi_V x_2}}_{x^\top \Pi_V x} + \underbrace{\textcolor{blue}{x_1^\top \Pi_{V^\perp}x_2} + \textcolor{orange}{x_2^\top \Pi_{V^\perp}x_2}}_{x^\top \Pi_{V^\perp}\Pi_{U^\perp}x}
        \\&= \|\Pi_V x_1\|^2 + \underbrace{\textcolor{blue}{x_1^\top x_2}}_{0} + \textcolor{blue}{x_1 \Pi_V x_2} + \textcolor{orange}{\|x_2\|^2} \geq \frac{1}{2} \|\Pi_V x_1 + x_2\|^2 > 0,
    \end{align*}
    where the last inequality is strict because $x \ne 0$ and $U \cap V^\perp = \{0\}$. 
    \item $H(U) \subset V$. If $x \in U$, then by definition $\Pi_{U^\perp} x = 0$ so that $Hx = \Pi_V x \in V$.
    \end{itemize}%
\end{proof}

\subsubsection{An example of a first-order stationarity condition avoiding optimality}
In this section, we discuss the first-order stationarity condition of \cite{ye2022pareto}, defined to as stationarity with respect to their optimization dynamics, \emph{Pareto navigating gradient descent} (PNG). We show that it  fails to be a necessary condition for preference optimality.

Despite that, their condition and dynamics have appealing properties since (i) they do not require second-order information, which is computationally more expensive, and (ii) their dynamics largely satisfies what they call the \emph{Pareto improvement property}, which ensures that each objective enjoys monotonic improvement during optimization:
\[\phantom{ \qquad \textrm{for all } i \in [n]}\frac{d}{dt} f_i(x_t) \leq 0, \qquad \textrm{for all } i \in [n].\]
As the goal of Pareto improvement can be at odds with preference optimality, this leads to an open question: when and how should we balance Pareto improvement with preference optimality? 

\begin{definition}[PNG stationarity, \cite{ye2022pareto}]
    Let $c > 0$. Define the \emph{PNG vector} $v_{c}(x)$:
    \begin{align*} 
    v_c(x) := &\argmin_{v \in \mathbb{R}^d}\, \frac{1}{2} \|\nabla f_0(x) - v\|^2 \\
    &\mathrm{s.t.} \ \ \nabla f_i(x)^\top v \geq c, \qquad \textrm{for all }i \in [n].
    \end{align*}
    Let $\epsilon > 0$. A vector $x \in \mathbb{R}^d$ is \emph{$(c,\epsilon)$-PNG stationary} if $v_c(x) = \lambda \nabla f_0(x)$ for some $\lambda \leq 0$ and:
    \[\min_{\beta \in \Delta^{n-1}}\, \|\nabla f_\beta(x)\| = \epsilon.\]
    \label{def:PNG_stationary}
\end{definition}

In the following example, we consider a two-dimensional example with two objectives. Let the standard basis be denoted $\mathbf{e}_1, \mathbf{e}_2 \in \mathbb{R}^2$, and let the objective functions $f_1, f_2 : \mathbb{R}^2 \to \mathbb{R}$ be defined:
\begin{align}
f_1(x) =\frac{1}{2} \|A(x + \mathbf{e}_1)\|^2\qquad\textrm{and}\qquad f_2(x) =\frac{1}{2} \|A(x - \mathbf{e}_1)\|^2,
\label{eq:counter_example_objective}
\end{align}
where $A \in \mathbb{R}^{2 \times 2}$ is full-rank.
\Cref{lem:conv-hull} shows that the Pareto set of the objectives $\mathrm{Pareto}(f_1, f_2)$ is the line segment from $- \mathbf{e}_1$ to $\mathbf{e}_2$. That is, the Pareto set is invariant under changes of $A$. However, the PNG stationarity condition is not, since the constraint set changes with $A$:
\[\big\{v : (x + \mathbf{e}_1)^\top H v \geq c\big\} \cap \big\{v : (x - \mathbf{e}_1)^\top H v \geq c\big\},\]
where $H = A^\top A$. Due to this discrepancy, PNG stationary points can fail to be preference optimal.

\begin{example} \label{ex:png}
    Let the preference function be:
    $f_0(x) =\frac{1}{2} \|x - \mathbf{e}_2\|^2$, and let the objectives $f_1, f_2$ be defined as in the above \Cref{eq:counter_example_objective} with:
    \begin{equation} \label{eqn:PNG-H}
    H = A^\top A = \begin{bmatrix}
        \ 1 & 1 \ \, \\ \ 1 & 2 \ \,
    \end{bmatrix}.
    \end{equation}
    Then, the unique preference optimal point is the origin $0$.
    However, the $(c, \epsilon)$-PNG stationary point is bounded away from 0. 
    It even converges to $\mathbf{e}_1$ as the error tolerance $\epsilon$ goes to zero.
\end{example}
\begin{proof}
    Consider the PNG vector $v_c(x)$ when $x$ is in the region:
    \[\mathcal{C} = \big\{x \in \mathbb{R}^2 : \nabla f_0(x)^\top \nabla f_i(x) < 0,\ \textrm{for }i =1,2\big\} \cap \big\{\mathbf{e}_2^\top x > 0\big\}.\]
    Here, both constraints $\nabla f_i(x)^\top v \geq c$ are active in the constrained optimization problem that defines the PNG vector; and so, $v_c(x)$ is the vertex point of the constraint set, satisfying: 
    \[\nabla f_1(x)^\top v_c(x) = \nabla f_2(x)^\top v_c(x) = c.\]
    Expanding out the gradients, we obtain:
    \[(x + \mathbf{e}_1)^\top H  v_c(x) = c \qquad\textrm{and}\qquad (x - \mathbf{e}_1)^\top H  v_c(x) = c.\]
    This implies that $\mathbf{e}_1^\top Hv_c(x) = 0$. Now suppose that $x_\mathrm{PNG} \in \mathcal{C}$ is PNG stationary. Then, by definition, it must satisfy $\nabla f_0(x_\mathrm{PNG}) \in \mathrm{span}\big(v_c(x_\mathrm{PNG})\big)$, so it has the form:
    \[\phantom{\qquad \textrm{where }\mathbf{e}_1^\top H u = 0} x_\mathrm{PNG} = \mathbf{e}_2 + \lambda u, \qquad \textrm{where }\mathbf{e}_1^\top H u = 0.\]
    Whenever the standard basis vectors are not eigenvectors of $H$, the line $\mathbf{e}_2 + \lambda u$ intersects $\mathrm{Pareto}(f_1, f_2)$ away from 0. In this example, we let $A$ satisfy $H = A^\top A$ where $H$ is given by \Cref{eqn:PNG-H}.
    
    Then, the line $\mathbf{e}_2 + \lambda u$ runs through $\mathbf{e}_1$ and $\mathbf{e}_2$. We can verify that $\mathcal{C}$ contains all points on this line between its two endpoints. When $x = \mathbf{e}_2 + \lambda (\mathbf{e}_1 - \mathbf{e}_2)$ and $\lambda \in (0,1)$, we have:
    \begin{align*}
        \nabla f_0(x)^\top \nabla f_1(x) &= (x - \mathbf{e}_2)^\top H (x + \mathbf{e}_1)
        \\&= \lambda (\mathbf{e}_1 - \mathbf{e}_2)^\top H\big((1 + \lambda) \mathbf{e}_1 + (1-\lambda) \mathbf{e}_2  \big) = - \lambda (1 - \lambda),
    \end{align*}
    and similarly, we have:
    \begin{align*}
        \nabla f_0(x)^\top \nabla f_2(x) &= (x - \mathbf{e}_2)^\top H (x - \mathbf{e}_1)
        \\&= \lambda (\mathbf{e}_1 - \mathbf{e}_2)^\top H\big((\lambda - 1) (\mathbf{e}_1 - \mathbf{e}_2)  \big) = - \lambda (1 - \lambda).
    \end{align*}
    This implies that for all $c > 0$ and $\epsilon > 0$, the $(c, \epsilon)$-PNG stationary point is bounded away from 0, converging to $\mathbf{e}_1$ as $\epsilon$ goes to zero.
\end{proof}

\subsection{Implications of smoothness assumptions}
\lemdiam* 

\paragraph{Proof of \Cref{lem:diam}}
Because each $f_i$ is $\mu$-strongly convex and $L$-Lipschitz smooth, so too is the convex combination $f_\beta$. This implies the upper and lower bounds:
\[\frac{1}{2} \mu \sum_{i \in [n]}  \beta_i \| x - x_i\|_2^2 \leq f_\beta(x) \leq \frac{1}{2} L \sum_{i \in [n]} \beta_i \|x - x_i\|_2^2.\]
It follows that the minimizer of $f_\beta$ is bounded:
\[f_\beta(x_\beta) \leq \frac{1}{2} L r^2.\]
On the other hand, if a point $\|x - x_i \| > 2s$ for some $i \in [n]$, then by reverse triangle inequality, $\|x - x_j\| > s$ for all $j \in [n]$. This implies that:
\[\|x - x_i\| > 2s \qquad \implies \qquad f_\beta(x) > \frac{1}{2} \mu s^2.\]
It follows that if $\|x - x_i\| > 2\sqrt{L/\mu}$ for some $i$, then $x$ is not a Pareto optimal point. \hfill $\blacksquare$

\vspace{11pt}

\xlipschitz*

\paragraph{Proof of \Cref{lem:x*-lipschitz}} That $x^*$ is Lipschitz continuous with Lipschitz continuous gradients follows from the following two lemmas:

\begin{lemma} \label{lem:dx-bound}
    Let $F \equiv (f_1,\ldots, f_n)$ be a set of twice-differentiable objective functions and let $f_0$ be a smooth preference function. Suppose the objectives are $L$-Lipschitz smooth and $\mu$-strongly convex:
    \[ \mu \mathbf{I} \preceq \nabla^2 f_i \preceq L \mathbf{I}.\]
    Let $R := \mathrm{diam}\big(\mathrm{Pareto}(F)\big)$. Then, the map $x^* : (\Delta^{n-1}, \ell_1) \to (\mathbb{R}^d, \ell_2)$ is $LR/\mu$-Lipschitz.
\end{lemma}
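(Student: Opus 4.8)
The plan is to control the Lipschitz constant of $x^*$ by bounding the mixed operator norm of its Jacobian $\nabla x^*(\beta)$ uniformly over $\Delta^{n-1}$, and then integrate along segments. Since the simplex is convex, any two points $\beta, \beta' \in \Delta^{n-1}$ are joined by a segment lying in $\Delta^{n-1}$, so by the fundamental theorem of calculus $\|x^*(\beta) - x^*(\beta')\|_2 \le \sup_{\gamma} \|\nabla x^*(\gamma)\|_{1 \to 2}\, \|\beta - \beta'\|_1$, where $\|\cdot\|_{1 \to 2}$ denotes the operator norm from $(\mathbb{R}^n, \ell_1)$ to $(\mathbb{R}^d, \ell_2)$ restricted to tangent directions of the simplex. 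It therefore suffices to show $\|\nabla x^*(\beta)\|_{1 \to 2} \le LR/\mu$ for every $\beta$.

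Using the closed form $\nabla x^*(\beta) = -\nabla^2 f_\beta(x_\beta)^{-1}\nabla F(x_\beta)^\top$ from \Cref{prop:lifted-pareto-set}, I would evaluate on a unit $\ell_1$-vector $v$ and write $\nabla x^*(\beta) v = -\nabla^2 f_\beta(x_\beta)^{-1}\sum_{i \in [n]} v_i \nabla f_i(x_\beta)$. The first factor has $\ell_2$ operator norm at most $1/\mu$ because $\nabla^2 f_\beta(x_\beta) \succeq \mu \mathbf{I}$ by \Cref{ass:objectives}. The remaining task is to bound $\|\sum_i v_i \nabla f_i(x_\beta)\|_2$, and the triangle inequality reduces this to bounding each $\|\nabla f_i(x_\beta)\|_2$ against $\|v\|_1 = 1$.

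The crux is the per-objective gradient bound. I would let $x_i := \argmin f_i$ and observe that $x_i$ is Pareto optimal: it is Pareto stationary with witness $\beta = \mathbf{e}_i$, hence Pareto optimal by the strong-convexity equivalence recalled after \Cref{def:pareto-stationarity}. Thus both $x_i$ and $x_\beta$ lie in $\mathrm{Pareto}(F)$, so $\|x_\beta - x_i\|_2 \le R$. Since $\nabla f_i(x_i) = 0$ and $\nabla^2 f_i \preceq L\mathbf{I}$, the fundamental theorem of calculus along the segment from $x_i$ to $x_\beta$ gives $\|\nabla f_i(x_\beta)\|_2 = \|\nabla f_i(x_\beta) - \nabla f_i(x_i)\|_2 \le L\|x_\beta - x_i\|_2 \le LR$. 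Assembling the pieces yields $\|\nabla x^*(\beta)v\|_2 \le \frac{1}{\mu}\sum_i |v_i| \cdot LR = LR/\mu$.

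The argument is largely routine once set up; the step I would be most careful about is the claim that each individual minimizer $x_i$ belongs to $\mathrm{Pareto}(F)$, which is exactly what lets me replace the distance $\|x_\beta - x_i\|_2$ by the intrinsic diameter $R$ rather than an a priori larger quantity. A secondary point worth stating cleanly is the correct mixed $(\ell_1 \to \ell_2)$ operator-norm bookkeeping, so that the factor of $\|v\|_1$ appears rather than a dimension-dependent constant.
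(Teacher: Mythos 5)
Your proof is correct and follows essentially the same route as the paper's: the paper likewise bounds $\|\nabla x^*(\beta)\|_{1,2} \leq \|\nabla^2 f_\beta(x_\beta)^{-1}\|_2 \cdot \|\nabla F(x_\beta)^\top\|_{1,2} \leq \frac{1}{\mu}\cdot LR$, with the factor $\|\nabla F(x_\beta)^\top\|_{1,2} \leq LR$ proved exactly as in your crux step (\Cref{lem:dF-bound}: triangle inequality over unit $\ell_1$-vectors, then $\|\nabla f_i(x_\beta)\|_2 = \|\nabla f_i(x_\beta)-\nabla f_i(x_i)\|_2 \leq L\|x_\beta - x_i\|_2 \leq LR$ since each minimizer $x_i$ and $x_\beta$ lie in $\mathrm{Pareto}(F)$). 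Your write-up is in fact slightly more careful than the paper's in spelling out why $x_i \in \mathrm{Pareto}(F)$ and in making the integration-along-segments step explicit.
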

\begin{proof}
    Recall from \Cref{eqn:x-best-response-gradient} that $\nabla x^*(\beta) = - \nabla^2 f_\beta(x_\beta)^{-1} \nabla F(x_\beta)^\top$. The following holds:
    \begin{align*} 
        \|\nabla x^*(\beta)\|_{1,2} &\overset{(i)}{\leq} \big\|\nabla^2 f_\beta(x_\beta)^{-1}\big\|_2 \cdot \big\|\nabla F(x_\beta)^\top\big\|_{1,2}
        \\&\overset{(ii)}{\leq} \frac{1}{\mu} \cdot LR,
    \end{align*}
    where (i) is a property of the $\|\cdot \|_{1,2}$-norm, (ii) uses $\mu \mathbf{I} \preceq \nabla^2 f_\beta(x_\beta)$ and \Cref{lem:dF-bound}.
\end{proof}

\begin{lemma} \label{lem:dxb-dxb'}
    Let $\beta, \beta' \in \Delta^{n-1}$. Then,
    \[\big\|\nabla x^*(\beta) - \nabla x^*(\beta')\big\|_{1,2} \leq \frac{2L^2 R}{\mu^2} \left(1 + \frac{L_H R}{\mu}\right) \cdot \|\beta - \beta'\|_1.\]
\end{lemma}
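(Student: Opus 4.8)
The plan is to treat $\nabla x^*(\beta) = -\nabla^2 f_\beta(x_\beta)^{-1}\nabla F(x_\beta)^\top$ from \Cref{eqn:x-best-response-gradient} as a product of two matrix-valued maps and to bound the difference by a standard ``product-rule'' telescoping. Writing $P_\beta := \nabla^2 f_\beta(x_\beta)$ and $B_\beta := \nabla F(x_\beta)^\top$, I decompose
\[\nabla x^*(\beta) - \nabla x^*(\beta') = -P_\beta^{-1}(B_\beta - B_{\beta'}) - (P_\beta^{-1} - P_{\beta'}^{-1})B_{\beta'},\]
and use submultiplicativity of $\|\cdot\|_{1,2}$ against $\|\cdot\|_2$ (as in the proof of \Cref{lem:dx-bound}) to reduce the task to bounding four quantities: $\|P_\beta^{-1}\|_2$, $\|B_\beta - B_{\beta'}\|_{1,2}$, $\|P_\beta^{-1} - P_{\beta'}^{-1}\|_2$, and $\|B_{\beta'}\|_{1,2}$.

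The first and fourth are immediate: strong convexity gives $\|P_\beta^{-1}\|_2 \le 1/\mu$, and \Cref{lem:dF-bound} gives $\|B_{\beta'}\|_{1,2} = \max_i \|\nabla f_i(x_{\beta'})\|_2 \le LR$ since $x_{\beta'} \in \mathrm{Pareto}(F)$. For the second, the columns of $B_\beta - B_{\beta'}$ are the vectors $\nabla f_i(x_\beta) - \nabla f_i(x_{\beta'})$; $L$-Lipschitz smoothness of each $f_i$ together with the Lipschitz bound $\|x_\beta - x_{\beta'}\|_2 \le (LR/\mu)\|\beta - \beta'\|_1$ from \Cref{lem:dx-bound} yields $\|B_\beta - B_{\beta'}\|_{1,2} \le (L^2 R/\mu)\|\beta - \beta'\|_1$.

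The main work is the third quantity. I use the resolvent identity $P_\beta^{-1} - P_{\beta'}^{-1} = P_\beta^{-1}(P_{\beta'} - P_\beta)P_{\beta'}^{-1}$, so that $\|P_\beta^{-1} - P_{\beta'}^{-1}\|_2 \le \mu^{-2}\|P_\beta - P_{\beta'}\|_2$. The matrix $P_\beta - P_{\beta'} = \nabla^2 f_\beta(x_\beta) - \nabla^2 f_{\beta'}(x_{\beta'})$ mixes a change of base point and a change of weight, so I split it by the triangle inequality through the intermediate term $\nabla^2 f_\beta(x_{\beta'})$. The base-point piece $\|\nabla^2 f_\beta(x_\beta) - \nabla^2 f_\beta(x_{\beta'})\|_2$ is controlled by $L_H$-Lipschitz continuity of the Hessian (\Cref{ass:hesssmooth}), inherited by the convex combination $\nabla^2 f_\beta = \sum_i \beta_i \nabla^2 f_i$, giving $L_H(LR/\mu)\|\beta - \beta'\|_1$; the weight piece $\nabla^2 f_\beta(x_{\beta'}) - \nabla^2 f_{\beta'}(x_{\beta'}) = \sum_i (\beta_i - \beta_i')\nabla^2 f_i(x_{\beta'})$ is bounded by $L\|\beta - \beta'\|_1$ using $\|\nabla^2 f_i\|_2 \le L$. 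Hence $\|P_\beta - P_{\beta'}\|_2 \le L(1 + L_H R/\mu)\|\beta - \beta'\|_1$.

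Assembling the pieces, the first summand contributes $\tfrac{1}{\mu}\cdot\tfrac{L^2 R}{\mu} = \tfrac{L^2 R}{\mu^2}$ and the second contributes $\tfrac{L}{\mu^2}(1 + L_H R/\mu)\cdot LR = \tfrac{L^2 R}{\mu^2}(1 + L_H R/\mu)$, each multiplied by $\|\beta - \beta'\|_1$, summing to $\tfrac{L^2 R}{\mu^2}(2 + L_H R/\mu)\|\beta - \beta'\|_1$. Since $2 + L_H R/\mu \le 2(1 + L_H R/\mu)$, this is dominated by the claimed bound $\tfrac{2L^2 R}{\mu^2}(1 + L_H R/\mu)\|\beta - \beta'\|_1$. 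The only real subtlety is bookkeeping the mixed operator norms — ensuring that each application of submultiplicativity pairs a $\|\cdot\|_2$ factor (for the square matrices $P_\beta^{-1}$) with a $\|\cdot\|_{1,2}$ factor (for $B$ and for the final output), exactly as in \Cref{lem:dx-bound}.
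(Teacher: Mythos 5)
Your proof is correct and follows essentially the same route as the paper's: the same add-and-subtract decomposition through $\nabla^2 f_\beta(x_\beta)^{-1}\nabla F(x_{\beta'})^\top$, the same constituent bounds ($\|P_\beta^{-1}\|_2 \le 1/\mu$, \Cref{lem:dx-bound} for $\|x_\beta - x_{\beta'}\|$, \Cref{lem:dF-bound} for $\|\nabla F(x_{\beta'})^\top\|_{1,2}$), and the same final relaxation $2 + L_H R/\mu \le 2(1 + L_H R/\mu)$. The only cosmetic difference is that you apply the resolvent identity once to $P_\beta^{-1} - P_{\beta'}^{-1}$ and then split the Hessian difference by weight and base point, whereas the paper splits the inverse difference itself and invokes \Cref{lem:mat-inv-derivative,lem:mat-inv-lipschitz} (whose proofs rest on the same identity), yielding identical constants.
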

\begin{proof}
    By definition, we have:
    \[\big\|\nabla x^*(\beta) - \nabla x^*(\beta')\big\|_{1,2} = \big\|- \nabla^2 f_\beta(x_\beta)^{-1} \nabla F(x_\beta)^\top  + \nabla^2 f_{\beta'}(x_{\beta'})^{-1} \nabla F(x_{\beta'})^\top \big\|_{1,2}.\]
    We can add and subtract $\nabla^2 f_\beta(x_\beta)^{-1} \nabla F(x_{\beta'})^\top$ inside the norm on the right-hand side (RHS):
    \begin{align*}
        (\mathrm{RHS}) &=  \big\|\textcolor{blue}{- \nabla^2 f_\beta(x_\beta)^{-1} \cdot \big[\nabla F(x_\beta) - \nabla F(x_{\beta'})\big]^\top} + \textcolor{orange}{\big[\nabla^2 f_\beta(x_\beta)^{-1} - \nabla^2 f_{\beta'}(x_{\beta'})^{-1}\big] \cdot \nabla F(x_{\beta'})^\top}\big\|_{1,2}.
    \end{align*}
    We can bound the two terms in the norm separately. For the first:
    \begin{align*}
        \big\|\textcolor{blue}{- \nabla^2 f_\beta(x_\beta)^{-1} \cdot \big[\nabla F(x_\beta) - \nabla F(x_{\beta'})\big]^\top} \big\|_{1,2} \overset{(i)}{\leq} \frac{L}{\mu} \cdot \|x_\beta - x_{\beta'}\| \overset{(ii)}{\leq} \frac{L^2R}{\mu^2} \|\beta - \beta'\|_1,
    \end{align*}
    where (i) follows the same argument as \Cref{lem:approx-dx-bound}, and (ii) applies \Cref{lem:dx-bound}. For the second term, we can add and subtract $\nabla^2 f_{\beta'}(x_\beta)^{-1} \nabla F(x_{\beta'})^\top$ to obtain:
    \begin{align*}
         \big\| \textcolor{orange}{\big[\nabla^2 f_\beta}&\textcolor{orange}{(x_\beta)^{-1} - \nabla^2 f_{\beta'}(x_{\beta'})^{-1}\big] \cdot \nabla F(x_{\beta'})^\top}\big\|_{1,2} 
         \\&=\big\| \big[\textcolor{teal}{\nabla^2 f_\beta(x_\beta)^{-1} - \nabla^2 f_{\beta'}(x_\beta)^{-1}} + \textcolor{purple}{\nabla^2 f_{\beta'}(x_\beta)^{-1} - \nabla^2 f_{\beta'}(x_{\beta'})^{-1}}\big] \cdot \nabla F(x_{\beta'})^\top\big\|_{1,2} 
         \\&\leq \left(\textcolor{teal}{\frac{L}{\mu^2}\|\beta - \beta'\|_1}  + \textcolor{purple}{\frac{L_H}{\mu^2} \frac{L R}{\mu} \|\beta - \beta'\|_1}\right) \cdot L R.
    \end{align*}
    where $\textcolor{teal}{\nabla^2f_\beta(x)^{-1} - \nabla^2f_{\beta'}(x)^{-1}}$ is bounded by \Cref{lem:mat-inv-derivative}; \textcolor{purple}{$\nabla^2 f_{\beta}(x)^{-1} - \nabla^2 f_{\beta}(x')^{-1}$} is bounded by \Cref{lem:mat-inv-lipschitz} and \Cref{lem:dx-bound}; and $\|\nabla F(x_{\beta'})^\top\|_{1,2}$ is bounded by \Cref{lem:dF-bound}.
\end{proof} 
\noindent The result follows by substituting in the definitions of $M_0$ and $M_1$. \hfill $\blacksquare$

\vspace{11pt}

\approxdx*

\begin{proof} 
Recall that $x_\beta := x^*(\beta)$. Then, by definition, we have:
    \begin{align*}
        \big\|\widehat{\nabla} x^*(x,\beta) - \nabla x^*(\beta)\big\|_{1,2} &= \big\|- \nabla^2 f_\beta(x)^{-1} \nabla F(x)^\top  + \nabla^2 f_\beta(x_\beta)^{-1} \nabla F(x_\beta)^\top \big\|_{1,2}.
    \end{align*}
    We can add and subtract $\nabla^2 f_\beta(x)^{-1} \nabla F(x_\beta)^\top$ inside the norm on the right-hand side (RHS) to get:
    \begin{align*}
        (\mathrm{RHS}) &= \big\|\textcolor{blue}{- \nabla^2 f_\beta(x)^{-1} \cdot \big[\nabla F(x) - \nabla F(x_\beta)\big]^\top} + \textcolor{orange}{\big[\nabla^2 f_\beta(x)^{-1} - \nabla^2 f_\beta(x_\beta)^{-1}\big]} \cdot \textcolor{purple}{\nabla F(x_\beta)^\top}\big\|_{1,2}
        \\&\overset{(i)}{\leq} \textcolor{blue}{\frac{L}{\mu}\cdot  \|x - x_\beta\|} + \textcolor{orange}{\frac{L_H}{\mu^2} \|x - x_\beta\|} \cdot \textcolor{purple}{LR} 
        \\&\overset{(ii)}{\leq} \frac{L}{\mu^2} \left( 1 + \frac{L_H R}{\mu}\right) \cdot \|\nabla f_\beta(x)\|,
    \end{align*}
    where (i) the first blue term uses $\mu \mathbf{I} \preceq \nabla^2 f_\beta$ and the $L$-Lipschitz smoothness of the objectives, while the bracket orange term follows from \Cref{lem:mat-inv-lipschitz} and the final purple term follows from \Cref{lem:dF-bound}, and (ii) uses the $\mu$-strong convexity of $f_\beta$.
\end{proof}

\approxgradient*

\paragraph{Proof of \Cref{lem:appro-gradient-bound}} Add and subtract $\nabla f_0(x)^\top \nabla x^*(\beta)$ within the norm on the right-hand side:
\begin{align*}
    (\mathrm{RHS}) &= \left\|\textcolor{blue}{\big(\nabla f_0(x_\beta)^\top - \nabla f_0(x)\big)^\top \nabla x^*(\beta)} + \textcolor{orange}{\nabla f_0(x)^\top \big(\nabla x^*(\beta) - \widehat{\nabla} x^*(x,\beta)\big)}\right\|_{1,2}
    \\&\leq \textcolor{blue}{L_0 M_0\|x_\beta - x\|} + \textcolor{orange}{\|\nabla f_0(x)\| \cdot \frac{1}{\mu} \frac{M_1}{2M_0} \|\nabla f_\beta(x)\|_2},
\end{align*}
where we use the fact that $f_0$ is $L_0$-Lipschitz smooth by \Cref{ass:f0}, that $x^*$ is $M_0$-Lipschitz continuous by \Cref{lem:x*-lipschitz}, and that $\|\textcolor{orange}{\nabla x^*(\beta) - \widehat{\nabla} x^*(\beta)}\|_{1,2}$ is bounded by \Cref{lem:approx-dx-bound}. The result follows from upper bounding $\|x_\beta - x\|$ by $\mu$-strong convexity of $f_\beta$:
\[\|x_\beta - x\|\leq \frac{1}{\mu} \|\nabla f_\beta(x)\|.\]
\hfill $\blacksquare$

\vspace{11pt}

\approxpref*
\paragraph{Proof of \Cref{lem:approx-pref-stationarity-computable}}
    For $(\epsilon, \epsilon_0)$-preference stationarity, we require that $\|\nabla f_{\hat{\beta}}(\hat{x})\|_2 \leq \epsilon$ and:
    \begin{align*}
        \nabla f_0(x_{\hat{\beta}})^\top \nabla x^*(\hat{\beta})(\beta' - \hat{\beta}) + \epsilon_0 \cdot \|\beta' - \hat{\beta}\|_1 \geq 0.
    \end{align*}
    Then by \Cref{lem:appro-gradient-bound}, the left-hand side is lower bounded:
    \begin{align*}
        &\nabla f_0(x)^\top \widehat{\nabla} x^*(x,\hat{\beta}) (\beta' - \hat{\beta}) - \mathrm{err}_{\nabla f_0} (\hat{\beta}, x) \cdot \|\beta' - \hat{\beta}\|_1 + \epsilon_0 \|\beta' - \hat{\beta}\|_1,
        \\&= \underbrace{\nabla f_0(x)^\top \widehat{\nabla} x^*(x,\hat{\beta}) (\beta' - \hat{\beta}) + \alpha \cdot \epsilon_0 \|\beta' - \hat{\beta}\|_1}_{\geq 0}
        + \underbrace{(1 - \alpha) \cdot \epsilon_0 \|\beta' - \hat{\beta}\|_1 - \mathrm{err}_{\nabla f_0} (\hat{\beta}, x) \cdot \|\beta' - \hat{\beta}\|_1}_{\geq 0},
    \end{align*}
    for $\alpha \in (0,1)$. The two terms are lower bounded by zero by conditions (1) and (2), respectively.\hfill $\blacksquare$

\subsection{Convergence for Pareto majorization-minimization}

\pmmconvergence*

\paragraph{Proof of \Cref{thm:pmm-convergence}} 
    Fix $k > 1$. For short, we let:
    \begin{align*}
    (x, \beta) \equiv (x_{k-1}, \beta_{k-1}) \qquad \textrm{and}\qquad (\hat{x} , \hat{\beta}) \equiv (x_k,\beta_k).
    \end{align*}

    \vspace{11pt}
    
    \noindent \textit{Claim.} At each iteration, either (i) the preference improves by at least a constant:
    \[f_0(x_{\hat{\beta}}) - f_0(x_\beta) \leq - \frac{1}{2} \frac{c_1}{\mu_g}\cdot \epsilon_0^2,\] 
    or (ii) the point $(\hat{x},\hat{\beta})$ is $(\epsilon_0, \epsilon)$-preference stationary.

    \vspace{11pt}

    Assuming the claim holds, the theorem immediately follows: if the algorithm in $K$ steps has not found an $(\epsilon_0, \epsilon)$-preference stationary point, then the value $f_0(x_{\beta_k})$ must decrease every iteration by a constant. But because $f_0 \circ x^*$ is lower bounded over $\Delta^{n-1}$ by $f^*$, this can happen at most:
    \[\frac{2 \mu_g\cdot \big(f^* - f_*\big)}{c_1^2 \cdot \epsilon_0^2} \quad \textrm{times}.\]

    \vspace{11pt}

    \noindent \emph{Proof of the claim.} 
    Let $\beta^* := \argmin_{\beta' \in\Delta^{n-1}}\, g(\beta'; x, \beta)$. \Cref{lem:approx-stationary-implies-close} shows that an approximate stationary point $\hat{\beta}$ of a strongly convex function is close to the exact stationary point $\beta^*$:
    \begin{equation} \label{eqn:beta-hat-beta*}
    \|\hat{\beta} - \beta^*\|_2 \leq \frac{c_1\epsilon_0}{\mu_g} =: \delta,
    \end{equation}
    where we let $\delta$ denote this constant for short. 
    
    We can analyze $\hat{\beta}$ through $\beta^*$. There are two cases, leading to either (1) $O(\epsilon_0)$-preference stationarity or (2) $O(\epsilon_0^2)$-constant descent. The two cases depend on the suboptimality of $\beta$.

    \vspace{11pt}
    
    \noindent \underline{Case 1}: $\|\beta^* - \beta\|_2 < 2\delta$. Here, $\beta$ is fairly close to the optimum $\beta^*$ of the surrogate. We show that the approximate stationarity of $\hat{\beta}$ with respect to the surrogate implies approximate preference stationarity. We do so via \Cref{lem:approx-pref-stationarity-computable}, which states that $(\hat{x}, \hat{\beta})$ is $(\epsilon_0, \epsilon)$-preference stationary provided:
    \begin{align}
    \|\nabla f_{\hat{\beta}}(\hat{x})\|_2 &\leq \vphantom{\frac{1}{2}}\epsilon\label{eqn:surrogate-to-df0}\\
    - \nabla f_0(x)^\top \widehat{\nabla} x^*(x, \hat{\beta})(\beta' - \hat{\beta}) &\leq \frac{1}{2} \epsilon_0 \|\beta' - \hat{\beta}\|_1, \qquad \forall \beta' \in \Delta^{n-1} \label{eqn:surrogate-to-stationarity}\\
    \mathrm{err}_{\nabla f_0}(x, \hat{\beta}) &\leq \frac{1}{2} \epsilon_0 \label{eqn:surrogate-to-err}
    \end{align}

    While \Cref{eqn:surrogate-to-df0} is immediate from our choice of $c_2$, defined in the last section of the proof, the others do not follow automatically from approximate stationarity with respect to the surrogate: the surrogate is derived from local information at $(x,\beta)$,  while we would like guarantees at $(x,\hat{\beta})$. But because $\beta^*$ is close to both $\beta$ and $\hat{\beta}$, we can control all of these. By triangle inequality:
    \begin{equation} \label{eqn:hat-beta-beta}
    \|\beta - \hat{\beta}\|_2 \leq  \|\beta - \beta^*\|_2 + \|\beta^* - \hat{\beta}\|_2 < 3\delta,
    \end{equation}
    combining \Cref{eqn:beta-hat-beta*} and the assumption that $\|\beta^* - \beta\|_2 < 2\delta$.

    We now show \Cref{eqn:surrogate-to-stationarity}. We have for all $\beta' \in \Delta^{n-1}$,
    \begin{align*}
        - \nabla f_0(x)^\top &\widehat{\nabla} x^*(x, \hat{\beta})(\beta' - \hat{\beta}) 
        \\&\overset{(i)}{\leq} \textcolor{blue}{- \nabla f_0(x)^\top \widehat{\nabla} x^*(x, \beta)(\beta' -\hat{\beta})} + \|\nabla f_0(x)^\top (\textcolor{orange}{\widehat{\nabla} x^*(x, \hat{\beta}) - \widehat{\nabla} x^*(x, \beta)})\|_{\infty} \cdot \|\beta' -\hat{\beta}\|_1
        \\&\overset{(ii)}{\leq} \textcolor{blue}{c_1 \epsilon_0 \cdot \|\beta' - \hat{\beta}\|_1} + \|\nabla f_0(x)\|_2 \cdot \textcolor{orange}{\frac{L}{\mu^2} \|\beta - \hat{\beta}\|_2} \cdot \|\beta' - \hat{\beta}\|_1
        \\&\overset{(iii)}{\leq} \frac{1}{2}\cdot 2\left( c_1 \epsilon_0 + \frac{L\|\nabla f_0(x)\|_2 }{\mu^2}\cdot 3\delta\right) \cdot \|\beta' - \hat{\beta}\|_1  \numberthis \label{eqn:c1-1}
        \\&\overset{(iv)}{\leq} \frac{1}{2} \epsilon_0 \cdot \|\beta' - \hat{\beta}\|_1,
    \end{align*}
    where (i) adds and subtracts $\nabla f_0(x)^\top \widehat{\nabla} x^*(x,\beta)(\beta' - \hat{\beta})$ and applies H\"older's inequality, (ii) substitutes in Condition 1 for the first term and bounds the second via \Cref{lem:mat-inv-derivative}, and (iii) bounds $\|\beta - \hat{\beta}\|_2$ using \Cref{eqn:hat-beta-beta}, and (iv) applies the definition of $c_1$, set in the last section of the proof.

    To show \Cref{eqn:surrogate-to-err}, we have:
    \begin{align*}
        \mathrm{err}_{\nabla f_0}(x,\hat{\beta}) &\overset{(i)}{=} \mathrm{err}_{\nabla f_0}(x,\beta) + \frac{1}{\mu}\left( \frac{M_1}{2M_0} \|\nabla f_0(x)\|_2 + L_0 M_0\right)  \left(\|\nabla f_{\hat{\beta}}(x)\|_2 - \|\nabla f_{\beta}(x)\|_2\right)
        \\&\overset{(ii)}{\leq} \mathrm{err}_{\nabla f_0}(x,\beta) + \frac{1}{\mu}\left( \frac{M_1}{2M_0} \|\nabla f_0(x)\|_2 + L_0 M_0\right)  \|\nabla F(x)^\top\|_{2} \cdot \|\hat{\beta} - \beta\|_2
        \\&\overset{(iii)}{\leq} \frac{1}{2}\cdot \frac{2}{\mu}\left( \frac{M_1}{2M_0} \|\nabla f_0(x)\|_2 + L_0 M_0\right) \left\{ c_2 \epsilon +   \|\nabla F(x)^\top\|_{2} \cdot 3\delta\right\} \numberthis \label{eqn:c1-2}
        \\&\overset{(iv)}{\leq} \frac{1}{2} \epsilon_0.
    \end{align*}
    where (i) expands out $\mathrm{err}_{\nabla f_0}$, (ii) uses the fact that $\beta \mapsto \|\nabla F(x)^\top \beta\|_2$ is $\|\nabla F(x)^\top\|_2$-Lipschitz in $\beta$ with respect to the $\ell_2$-norm, (iii) applies the definition of $\mathrm{err}_{\nabla f_0}$ and the inequality \Cref{eqn:hat-beta-beta}, and (iv) follows by definition of $c_1$ and $c_2$, set in the last section of the proof.

    As \Cref{eqn:surrogate-to-df0,eqn:surrogate-to-stationarity,eqn:surrogate-to-err} hold, \Cref{lem:approx-pref-stationarity-computable} shows that $(\hat{x},\hat{\beta})$ is $(\epsilon_0, \epsilon)$-preference stationary.

    \vspace{11pt}

    \noindent \underline{Case 2}:  $\|\beta^* - \beta\|_2 \geq 2 \delta$. Here $\beta$ is suboptimal and $\beta^*$ achieves a large descent:
    \begin{align*} 
    f_0(x_{\beta^*}) - f_0(x_\beta) &\overset{(i)}{\leq} g(\beta^*; x,\beta) - f_0(x_\beta)
    \\&\overset{(ii)}{\leq} \mathrm{err}_{\nabla f_0}(x,\beta) - \frac{1}{2} \mu_g \|\beta^* - \beta\|_2^2 
    \\&\overset{(iii)}{\leq} \frac{1}{\mu}\left( \frac{M_1}{2M_0} \|\nabla f_0(x)\|_2 + L_0 M_0\right) c_2\epsilon_0^2 -  2\mu_g \delta^2 \numberthis \label{eqn:c2-2}
    \\&\overset{(iv)}{\leq} - \frac{3}{2} \mu_g \delta^2, \numberthis \label{eqn:g-beta*}
    \end{align*}
    where (i) uses the majorizing property of $g$, (ii) follows from \Cref{lem:Q-descent}, (iii) applies the definition of $\mathrm{err}_{\nabla f_0}(x,\beta)$ along with the assumption that $\epsilon \leq \epsilon_0^2$, and (iv) applies the definition of $c_2$.
    
    The large descent also carries over to $\hat{\beta}$ because it is approximately stationary:
    \begin{align*}
        f_0(x_{\hat{\beta}}) - f_0(x_\beta) &\overset{(i)}{\leq} g(\hat{\beta}; x, \beta) - f_0(x_\beta) 
        \\&\overset{(ii)}{=} g(\beta^*; x, \beta) - f_0(x_\beta) + \big(g(\hat{\beta}; x, \beta) - g(\beta^*; x, \beta)\big)
        \\&\overset{(iii)}{\leq} - \frac{3}{2} \mu_g \delta^2 + c_1 \epsilon_0 \cdot \delta = - \frac{1}{2} \frac{c_1}{\mu_g}\cdot \epsilon_0^2,
    \end{align*}
    where (i) uses the majorizing property of $g$, (ii) adds and subtracts $g(\beta^*; x, \beta)$ and (iii) applies \Cref{eqn:g-beta*} and \Cref{lem:approx-stationary-implies-close}.

    Thus, the preference improves by at least a constant. To finish proving the claim, we need to verify that it is indeed possible to set $c_1$ and $c_2$ appropriately.

    \vspace{11pt}

    \noindent \underline{Setting $c_1$ and $c_2$}: we tabled a few inequalities above. Recall:

    For \Cref{eqn:surrogate-to-df0}, we need:
    \[c_2 \leq 1.\]
    
    For \Cref{eqn:c1-1}, we need:
    \[2\left( c_1 \epsilon_0 + \frac{3 L\|\nabla f_0(x)\|_2 }{\mu^2}\cdot \frac{c_1\epsilon_0}{\mu_g}\right) \leq \epsilon_0.\]
    
    For \Cref{eqn:c1-2}, we need:
    \[\frac{2}{\mu}\left( \frac{M_1}{2M_0} \|\nabla f_0(x)\|_2 + L_0 M_0\right) \left\{ c_2\epsilon +   3 \|\nabla F(x)^\top\|_{2} \cdot \frac{c_1\epsilon_0}{\mu_g}\right\} \leq \epsilon_0.\]

    For \Cref{eqn:c2-2}, we need:
    \[\frac{1}{\mu}\left( \frac{M_1}{2M_0} \|\nabla f_0(x)\|_2 + L_0 M_0\right) c_2 \epsilon_0^2 \leq \frac{1}{2} \mu_g \left(\frac{c_1\epsilon_0}{\mu_g}\right)^2.\]
    It is unenlightening but straightforward to verify that it suffices to set:
    \begin{align*} 
    &c_1 \cdot \max\left\{2 + \frac{6L \|\nabla f_0(x)\|_2}{\mu^2 \cdot \mu_g}, \frac{12}{\mu \cdot \mu_g}\left( \frac{M_1}{2M_0} \|\nabla f_0(x)\|_2 + L_0 M_0\right) \cdot \|\nabla F(x)^\top\|_2 \right\} \leq 1\\
    &c_2 \cdot \max\left\{1, \frac{2}{\mu}\left( \frac{M_1}{2M_0} \|\nabla f_0(x)\|_2 + L_0 M_0\right) \cdot \left(2 \vee \frac{\mu_g}{c_1^2} \right)\right\} \leq 1,
    \end{align*}
    where $a \vee b := \max\{a,b\}$.

    A concerned reader may wonder whether $c_1$ and $c_2$ may be bounded away from zero, as claimed in the theorem statement: we need to ensure that $\|\nabla f_0(x)\|_2$ and $\|\nabla F(x)^\top\|_2$ do not blow up. Indeed, this holds because the iterates $x_k$ remain within a constant distance of the Pareto set. In particular, since $c_2 \leq 1$, by Condition 2, we have that the $k$th iterate satisfies:
    \[\|x_k - x_{\beta_k}\| \leq \frac{\epsilon}{\mu},\]
    which follows from $\mu$-strong convexity of $f_{\beta_k}$. Thus, all iterates of the algorithm are within $\epsilon/\mu$ of the Pareto set and also satisfy for all $k, k' \in \mathbb{N}$:
    \[\|x_k - x_{k'}\|\leq R + 2 \epsilon / \mu.\] 
    Then, by $L_0$-Lipschitz smoothness, we can bound:
    \begin{align*} 
    \|\nabla f_0(x_k)\| &\leq \|\nabla f_0(x_1)\| + \|\nabla f_0(x_k) - \nabla f_0(x_1)\| 
    \\&\leq \|\nabla f_0(x_1)\| + L_0 \cdot (R + 2 \epsilon / \mu).
    \end{align*}
    Similarly, by $L$-Lipschitz smoothness, we also have:
    \begin{align*} 
    \|\nabla F(x_k)^\top\|_2 &\leq \|\nabla F(x_1)^\top\|_2 + \|\nabla F(x_k)^\top - \nabla F(x_1)^\top\|_2 
    \\&\leq \|\nabla F(x_1)^\top\|_2 + n L \cdot (R + 2 \epsilon / \mu).
    \end{align*}
\hfill$\blacksquare$

\subsubsection{Analytic lemma: gradient bound}

\begin{lemma} \label{lem:dF-bound}
    Let $R := \mathrm{diam}\big(\mathrm{Pareto}(F)\big)$. Then for any $x_\beta = x^*(\beta)$,
    \[\big\|\nabla F(x_\beta)^\top\big\|_{1,2} \leq LR.\]
\end{lemma}
\begin{proof}
    By definition, we have:
    \begin{align*}
        \big\|\nabla F(x_\beta)^\top\big\|_{1,2} &= \sup_{\|z\|_1 = 1}\, \bigg\|\sum_{i\in [n]} z_i \nabla f_i(x_\beta)\bigg\|_2
        \\&\overset{(i)}{\leq} \sup_{\|z\|_1 = 1} \, \sum_{i\in[n]} |z_i| \cdot \|\mathrm{sign}(z_i) \cdot \nabla f_i(x_\beta)\|_2
        \\&\overset{(ii)}{\leq} \max_{i \in [n]}\, \|\nabla f_i(x_\beta)\|_2 
        \\&\overset{(iii)}{\leq} \max_{i \in [n]}\, L \|x - x_i\|_2,
    \end{align*}
    where (i) follows from Jensen's inequality, (ii) holds because the max is no smaller than the average, (iii) applies $L$-Lipschitz smoothness. In particular, let $x_i = \argmin\, f_i(x)$, so that $\nabla f_i(x_i) = 0$. Then:
    \[\|\nabla f_i(x_\beta) - \nabla f_i(x_i)\|_2 \leq L \|x_\beta - x_i\|_2.\]
    The result holds because $x_\beta$ and all $x_i$'s are contained in $\mathrm{Pareto}(F)$.
\end{proof}

\subsection{Analytic lemmas: matrix inverses}

\begin{lemma} \label{lem:mat-inv-lipschitz}
    Let $M : \mathbb{R}^d \to \mathbb{R}^{d \times d}$ be $L$-Lipschitz satisfying $M(x) \succeq \mu \mathbf{I}$ where $\mathbb{R}^d$ has the $\ell_2$-norm and $\mathbb{R}^{d \times d}$ the operator norm. Then, the map $x \mapsto M(x)^{-1}$ is $L / \mu^2$-Lipschitz.
\end{lemma}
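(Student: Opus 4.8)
The plan is to reduce everything to the standard resolvent (second resolvent) identity together with submultiplicativity of the operator norm. Fix $x, y \in \mathbb{R}^d$ and write $A := M(x)$ and $B := M(y)$, both of which are invertible since $A, B \succeq \mu \mathbf{I}$ forces their eigenvalues to be at least $\mu > 0$. The key algebraic fact I would invoke is
\[
A^{-1} - B^{-1} = A^{-1}(B - A)B^{-1},
\]
which follows immediately by left-multiplying by $A$ and right-multiplying by $B$. Taking operator norms and applying submultiplicativity gives
\[
\big\|M(x)^{-1} - M(y)^{-1}\big\|_2 \leq \big\|M(x)^{-1}\big\|_2 \cdot \big\|M(y) - M(x)\big\|_2 \cdot \big\|M(y)^{-1}\big\|_2.
\]

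Next I would bound each of the three factors. For the outer two, the hypothesis $M(x) \succeq \mu \mathbf{I}$ means every eigenvalue of $M(x)$ is at least $\mu$, so the largest eigenvalue of $M(x)^{-1}$ is at most $1/\mu$, giving $\|M(x)^{-1}\|_2 \leq 1/\mu$; the same bound holds for $M(y)^{-1}$. For the middle factor, the $L$-Lipschitz assumption on $M$ with respect to the $\ell_2$-norm on the domain and the operator norm on the codomain gives directly $\|M(y) - M(x)\|_2 \leq L\|x - y\|_2$. Substituting these three bounds yields
\[
\big\|M(x)^{-1} - M(y)^{-1}\big\|_2 \leq \frac{1}{\mu}\cdot L\|x-y\|_2 \cdot \frac{1}{\mu} = \frac{L}{\mu^2}\,\|x - y\|_2,
\]
which is exactly the claimed $L/\mu^2$-Lipschitz bound.

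There is no serious obstacle here: the entire argument is one identity plus three elementary norm estimates. The only point that warrants a sentence of care is the eigenvalue bound $\|M(x)^{-1}\|_2 \leq 1/\mu$, which relies on reading $M(x) \succeq \mu \mathbf{I}$ as a statement about symmetric positive-definite matrices so that the operator norm of the inverse equals the reciprocal of the smallest eigenvalue. Everything else is immediate from submultiplicativity and the Lipschitz hypothesis.
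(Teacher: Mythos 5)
Your proposal is correct and follows essentially the same route as the paper: the paper also derives the resolvent identity $M_x^{-1} - M_{x'}^{-1} = -M_{x'}^{-1}(M_x - M_{x'})M_x^{-1}$ (the same identity you use, with the roles of the two points swapped) and then applies submultiplicativity with the bounds $\|M^{-1}\|_2 \leq 1/\mu$ and $\|M_x - M_{x'}\|_2 \leq L\|x - x'\|_2$. Your closing caution about symmetry is slightly stronger than needed---$v^\top M(x)v \geq \mu\|v\|_2^2$ alone already gives $\|M(x)v\|_2 \geq \mu\|v\|_2$ and hence $\|M(x)^{-1}\|_2 \leq 1/\mu$ even without symmetry---but in context the matrices are Hessians, so this is immaterial.
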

\begin{proof}
    For short, let us denote $M(x)$ by $M_x$. Note that $\mathbf{I} = (M_x' + M_x - M_x')M_x^{-1}$, so that:
    \begin{align*}
        M_x^{-1} - M_{x'}^{-1} &= M_x^{-1} - M_{x'}^{-1}\big(\textcolor{blue}{M_{x'}} + \textcolor{orange}{M_x - M_{x'}}\big) M_x^{-1}
        \\&= M_x^{-1} - \textcolor{blue}{M_x^{-1}}  - M_{x'}^{-1} \big(\textcolor{orange}{M_x - M_{x'}}\big) M_{x}^{-1} = - M_{x'}^{-1}\big(M_x - M_{x'}\big) M_x^{-1},
    \end{align*}
    which is series of unenlightening algebraic manipulations. But now, we may apply $L$-Lipschitz continuity to obtain $\|M_x - M_{x'}\| \leq L \|x - x'\|$ and the $\mu$-lower bound to obtain $\|M_{x}^{-1}\|, \|M_{x'}^{-1}\| \leq \mu^{-1}$. Together, we obtain $L/\mu^2$-Lipschitz continuity:
    \[\big\|M(x)^{-1} - M(x')^{-1}\big\| \leq \frac{L}{\mu^2} \|x - x'\|.\]
\end{proof}

\begin{lemma} \label{lem:mat-inv-derivative}
    Let $ M_1,\ldots, M_n$ be positive-definite matrices in $\mathbb{R}^{d\times d}$ equipped with the operator norm, and let $\Delta^{n-1}$ be equipped with the $\ell_1$ norm. Suppose the following holds:
    \[\mu\mathbf{I} \preceq M_1,\ldots, M_n \preceq L \mathbf{I}.\]
    Then, the map $\beta \mapsto M_\beta^{-1}$ where $M_\beta := \sum_{i \in [n]} \beta_i M_i$ has bounded derivative $\|\nabla_\beta M_\beta^{-1}\|_{1,2} \leq L/\mu^2$.
\end{lemma}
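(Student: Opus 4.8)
The plan is to differentiate the matrix inverse explicitly and then reduce the mixed norm $\|\cdot\|_{1,2}$ — the operator norm of a linear map from $(\mathbb{R}^n, \ell_1)$ into the space of matrices equipped with the operator norm $\|\cdot\|_2$ — to a supremum over unit directions. Concretely, writing $M_\beta = \sum_{i \in [n]} \beta_i M_i$, the standard formula for the derivative of a matrix inverse gives the partial derivatives
\[\partial_{\beta_j} M_\beta^{-1} = - M_\beta^{-1} \,(\partial_{\beta_j} M_\beta)\, M_\beta^{-1} = - M_\beta^{-1} M_j M_\beta^{-1}.\]
By definition of the $\|\cdot\|_{1,2}$-norm, it then suffices to bound the directional derivative along any $v \in \mathbb{R}^n$ with $\|v\|_1 = 1$, namely
\[\sum_{j \in [n]} v_j\, \partial_{\beta_j} M_\beta^{-1} = - M_\beta^{-1} M_v M_\beta^{-1}, \qquad M_v := \sum_{j \in [n]} v_j M_j,\]
and show its operator norm is at most $L/\mu^2$ uniformly in such $v$.

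The core of the argument is submultiplicativity of the operator norm: $\|M_\beta^{-1} M_v M_\beta^{-1}\|_2 \leq \|M_\beta^{-1}\|_2 \cdot \|M_v\|_2 \cdot \|M_\beta^{-1}\|_2$. I would establish the two ingredients separately. For the inverse factors, since $\beta \in \Delta^{n-1}$ has $\sum_i \beta_i = 1$ and each $M_i \succeq \mu \mathbf{I}$, we get $M_\beta = \sum_i \beta_i M_i \succeq \mu \mathbf{I}$, hence $\|M_\beta^{-1}\|_2 \leq 1/\mu$. For the middle factor, the triangle inequality together with $\|M_j\|_2 \leq L$ (each $M_j \preceq L\mathbf{I}$) yields $\|M_v\|_2 \leq \sum_j |v_j|\,\|M_j\|_2 \leq L \|v\|_1 = L$. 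Multiplying the three bounds gives $\|M_\beta^{-1} M_v M_\beta^{-1}\|_2 \leq L/\mu^2$, and taking the supremum over $\|v\|_1 = 1$ delivers $\|\nabla_\beta M_\beta^{-1}\|_{1,2} \leq L/\mu^2$.

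There is no real obstacle here; the computation is short and relies only on the derivative-of-inverse identity and operator-norm submultiplicativity. The only points that warrant care are bookkeeping ones: first, fixing the precise meaning of $\|\cdot\|_{1,2}$ for a map into matrix space (so that reducing to a supremum over $\ell_1$-unit directions is justified), and second, using that $\beta$ lies in the simplex — not merely nonnegative — so that the convex combination $M_\beta$ inherits the lower bound $\mu \mathbf{I}$ exactly rather than up to a factor. Once these conventions are pinned down, the three-factor bound closes the proof immediately.
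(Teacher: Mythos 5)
Your proof is correct and follows essentially the same route as the paper's: the derivative-of-inverse identity $\nabla_\beta M_\beta^{-1} = -M_\beta^{-1}(\nabla_\beta M_\beta)M_\beta^{-1}$, the bound $\|M_\beta^{-1}\|_2 \leq 1/\mu$ from the spectral lower bound on the simplex, and the bound $\|M_v\|_2 \leq L\|v\|_1$ on the middle factor, combined by submultiplicativity. Your write-up merely makes explicit the reduction to $\ell_1$-unit directions that the paper leaves implicit.
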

\begin{proof}
    We can compute the derivative of the above map:
    \[\nabla_\beta M_\beta^{-1} = - M_\beta^{-1} \big(\nabla_\beta M_\beta\big) M_\beta^{-1},\]
    where $\nabla_\beta M_\beta d\beta = M_{d\beta}$. The upper bound on the $M_i$'s implies that $\|\nabla_\beta M_\beta\|_\mathrm{1,2} \leq L$. And on the other hand, the lower bound implies that $\|M_\beta^{-1}\|_2 \leq \mu^{-1}$.
\end{proof}

\subsubsection{Analytic lemmas: constrained optimization of strongly convex functions}

\begin{lemma} \label{lem:approx-stationary-implies-close}
    Let $f : \mathbb{R}^n \to \mathbb{R}$ be smooth and convex and let $\mathcal{C} \subset \mathbb{R}^n$ be a convex constraint set. Suppose that $\beta^*, \hat{\beta} \in \mathcal{C}$ are stationary and $\epsilon$-approximately stationary, respectively:
    \[\phantom{ \quad \forall \beta \in \mathcal{C}}- \nabla f(\beta^*)^\top(\beta - \beta^*) \leq 0\qquad \textrm{and}\qquad - \nabla f(\hat{\beta})^\top(\beta - \hat{\beta}) \leq \epsilon \|\beta - \hat{\beta}\|, \quad \forall \beta \in \mathcal{C}.\]
    Then, $f(\hat{\beta}) - f(\beta^*) \leq \epsilon \|\hat{\beta} - \beta^*\|$. Furthermore, if $f$ is $\mu$-strongly convex, then $\|\hat{\beta} - \beta^*\| \leq \epsilon/\mu$. 
\end{lemma}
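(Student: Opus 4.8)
The plan is to treat the two claims separately: the suboptimality bound $f(\hat\beta) - f(\beta^*) \leq \epsilon\|\hat\beta - \beta^*\|$ follows from plain convexity, while the distance bound $\|\hat\beta - \beta^*\| \leq \epsilon/\mu$ needs strong convexity. In both cases the whole point is to feed the two variational inequalities a single, well-chosen test point, namely $\beta = \beta^*$ into the condition for $\hat\beta$ and $\beta = \hat\beta$ into the condition for $\beta^*$.

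For the first claim, I would start from the convexity lower bound anchored at $\hat\beta$, that is $f(\beta^*) \geq f(\hat\beta) + \nabla f(\hat\beta)^\top(\beta^* - \hat\beta)$. Rearranging gives $f(\hat\beta) - f(\beta^*) \leq -\nabla f(\hat\beta)^\top(\beta^* - \hat\beta)$, and applying the $\epsilon$-approximate stationarity of $\hat\beta$ with the admissible test point $\beta = \beta^* \in \mathcal{C}$ bounds the right-hand side by $\epsilon\|\beta^* - \hat\beta\|$, which is exactly the asserted inequality.

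For the second claim, the key move is to write the strong-convexity lower bound twice and add the two instances: once anchored at $\hat\beta$ and evaluated at $\beta^*$, and once anchored at $\beta^*$ and evaluated at $\hat\beta$. Summing, the function values cancel and the two quadratic terms combine, leaving
\[
\mu\,\|\hat\beta - \beta^*\|^2 \;\leq\; -\nabla f(\hat\beta)^\top(\beta^* - \hat\beta) \;-\; \nabla f(\beta^*)^\top(\hat\beta - \beta^*).
\]
I would then bound the second inner-product term by the \emph{exact} stationarity of $\beta^*$ (test point $\hat\beta$), which makes it $\leq 0$, and the first by the approximate stationarity of $\hat\beta$ (test point $\beta^*$), which makes it $\leq \epsilon\|\hat\beta - \beta^*\|$. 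This yields $\mu\,\|\hat\beta - \beta^*\|^2 \leq \epsilon\,\|\hat\beta - \beta^*\|$, and dividing through by $\|\hat\beta - \beta^*\|$ gives the result; the degenerate case $\hat\beta = \beta^*$ should be disposed of first so that the division is legitimate.

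The main obstacle is conceptual rather than computational: it is recognizing the symmetrization trick of summing the two strong-convexity inequalities so that the values of $f$ cancel and the residual cross gradient terms align precisely with the stationarity hypotheses one has available. Once that pairing is spotted, the rest is routine algebra, and no curvature information beyond the single parameter $\mu$ is needed.
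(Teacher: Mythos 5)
Your proof is correct and follows essentially the same route as the paper's: both parts hinge on testing the exact stationarity of $\beta^*$ at $\hat\beta$ and the $\epsilon$-stationarity of $\hat\beta$ at $\beta^*$, bounding $f(\hat\beta)-f(\beta^*)$ by $\nabla f(\hat\beta)^\top(\hat\beta-\beta^*)$, and then combining gradient strong monotonicity with the two stationarity bounds to get the distance estimate. The only cosmetic differences are that you reach the first bound via the first-order convexity inequality where the paper uses the mean value theorem plus gradient monotonicity, and you derive strong monotonicity by summing the two strong-convexity inequalities where the paper invokes it directly; incidentally, your version handles the division-by-zero case explicitly and avoids the sign typo in the paper's penultimate display, which should read $\leq \epsilon\|\hat\beta-\beta^*\|$ rather than $\geq$.
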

\begin{proof}
    For the first part, we apply the mean value theorem, which states that there exists some $\beta$ that is a convex combination of $\hat{\beta}$ and $\beta^*$ such that:
    \begin{align*} 
    f(\hat{\beta}) - f(\beta^*) &\overset{(i)}{=} \nabla f(\beta)^\top (\hat{\beta} - \beta^*)
    \\&\overset{(ii)}{\leq} \nabla f(\hat{\beta})^\top (\hat{\beta} - \beta^*)
    \\&\overset{(iii)}{\leq} \epsilon \|\hat{\beta} - \beta^*\|,
    \end{align*}
    where (i) applies the mean value theorem, (ii) uses the monotonicity of gradients of convex functions:
    \begin{align*}
        \big(\nabla f(\hat{\beta}) - \nabla f(\beta)\big)^\top(\hat{\beta} - \beta) \geq 0
    \end{align*}
    and that $\hat{\beta} - \beta = \lambda (\hat{\beta} - \beta^*)$ for some $\lambda \in [0,1]$, and (iii) applies the $\epsilon$-stationarity condition.
    
    For the second part, by strong convexity, we have on the one hand:
    \[\big(\nabla f(\hat{\beta}) - \nabla f(\beta^*)\big)^\top (\hat{\beta} - \beta^*) \geq \mu \|\hat{\beta} - \beta^*\|^2. \]
    And on the other, by stationarity and $\epsilon$-stationarity, we have that:
    \[\big(\nabla f(\hat{\beta}) - \nabla f(\beta^*)\big)^\top (\hat{\beta} - \beta^*) \geq \epsilon \|\hat{\beta} - \beta^*\|.\]
    Dividing through by $\|\hat{\beta} - \beta^*\|$ yields the result.
\end{proof}

\begin{restatable}{lemma}{qdescent}\label{lem:Q-descent}
    Let $\mathcal{C} \subset \mathbb{R}^n$ be a convex constraint set with $\beta \in \mathcal{C}$, and let $Q : \mathcal{C} \to \mathbb{R}$ be a quadratic: 
    \begin{equation}  \label{eqn:Q}
    Q(\beta') = c + v^\top (\beta' - \beta) + \frac{1}{2} C \|\beta' - \beta\|^2,
    \end{equation}
    where $c \in \mathbb{R}$, $v \in \mathbb{R}^n$, and $C > 0$. Let $\beta^* \in \mathcal{C}$ minimize $Q$. If $\|\beta^* - \beta\| \geq \epsilon > 0$, then:
    \[Q(\beta^*) - Q(\beta) \leq - \frac{1}{2} C \epsilon^2.\]
\end{restatable}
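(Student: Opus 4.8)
The plan is to use the fact that $Q$ is exactly $C$-strongly convex together with the first-order optimality of the constrained minimizer $\beta^*$. First I would record that, since $Q$ is a quadratic with Hessian $C\mathbf{I}$, its second-order Taylor expansion about any point is exact; in particular, expanding about $\beta^*$ and evaluating at $\beta$ gives the identity
\[
Q(\beta) = Q(\beta^*) + \nabla Q(\beta^*)^\top(\beta - \beta^*) + \frac{1}{2} C \|\beta - \beta^*\|^2,
\]
where $\nabla Q(\beta') = v + C(\beta' - \beta)$. This step is a routine algebraic verification and is the only computation involved.

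Next I would invoke the first-order optimality condition for minimizing a smooth convex function over a convex set: because $\beta^*$ minimizes $Q$ over $\mathcal{C}$ and $\beta \in \mathcal{C}$, the variational inequality $\nabla Q(\beta^*)^\top(\beta - \beta^*) \geq 0$ holds. Substituting this into the exact expansion above discards the nonnegative linear term and yields
\[
Q(\beta) \geq Q(\beta^*) + \frac{1}{2} C \|\beta - \beta^*\|^2.
\]
Finally, using the hypothesis $\|\beta^* - \beta\| \geq \epsilon$ to lower bound the quadratic term by $\tfrac{1}{2} C \epsilon^2$ and rearranging gives $Q(\beta^*) - Q(\beta) \leq -\tfrac{1}{2}C\epsilon^2$, as claimed.

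I anticipate no serious obstacle here; the one point that deserves care is the direction of the optimality inequality, which must be $\nabla Q(\beta^*)^\top(\beta - \beta^*) \geq 0$ (the negative gradient lies in the normal cone at $\beta^*$), so that it may be dropped while preserving the inequality in the correct direction. An alternative, slightly more elementary route avoids citing the variational inequality: restrict $Q$ to the segment $\beta_t := \beta + t(\beta^* - \beta)$, $t \in [0,1]$, which lies in $\mathcal{C}$ by convexity, obtaining a one-dimensional parabola $\phi(t) := Q(\beta_t)$ that is minimized at $t=1$ on $[0,1]$; comparing the location of its unconstrained vertex to $1$ forces $v^\top(\beta^* - \beta) \leq -C\|\beta^* - \beta\|^2$, and substituting into $Q(\beta^*) - Q(\beta) = v^\top(\beta^* - \beta) + \tfrac{1}{2}C\|\beta^* - \beta\|^2$ gives the same bound. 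Either approach completes the proof in a few lines.
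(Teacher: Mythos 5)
Your proposal is correct. Your primary route, however, is genuinely different from the paper's: you expand $Q$ exactly to second order about the constrained minimizer $\beta^*$ and then invoke the variational inequality $\nabla Q(\beta^*)^\top(\beta - \beta^*) \geq 0$, which immediately yields the strong-convexity-type bound $Q(\beta) \geq Q(\beta^*) + \tfrac{1}{2}C\|\beta-\beta^*\|^2$ and hence the claim. The paper instead stays elementary and one-dimensional: it restricts $Q$ to the segment $\lambda \mapsto \beta + \lambda(\beta^*-\beta)$, writes the resulting parabola as $q(\lambda) = c + \tfrac{1}{2}C\|\beta^*-\beta\|^2\,\lambda(\lambda - 2\lambda^*)$, observes that optimality of $\beta^*$ over the convex set forces the vertex to satisfy $\lambda^* \geq 1$, and plugs $\lambda = 1$ into this factored form. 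Your ``alternative, slightly more elementary route'' at the end is in fact exactly the paper's argument (your condition $v^\top(\beta^*-\beta) \leq -C\|\beta^*-\beta\|^2$ is precisely the paper's $\lambda^* \geq 1$). The trade-off: your primary approach is shorter and leans on a standard cited fact (whose own proof is the same segment-restriction trick), while the paper's version is fully self-contained and needs nothing beyond single-variable calculus. You also correctly flagged the one delicate point — the direction of the variational inequality — so there is no gap.
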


\begin{proof}
    Define the quadratic function $q : \mathbb{R} \to \mathbb{R}$ by:
    \begin{align} 
    q(\lambda) &= c + \lambda v^\top (\beta^* - \beta) + \frac{1}{2} C \lambda^2 \|\beta^* - \beta\|^2 \notag
    \\&= c + \frac{1}{2} C\|\beta^* - \beta\|^2 \lambda \big(\lambda - 2\lambda^*\big) \label{eqn:q-lambda}
    \end{align}
    where $\lambda^* = - \frac{v^\top (\beta^* - \beta)}{C\|\beta^* - \beta\|^2}$ minimizes $q$. Restricting $Q$ to the line between $\beta$ and $\beta^*$, we get:
    \[Q\big(\beta + \lambda( \beta^* - \beta)\big) = q(\lambda),\]
    for $\lambda \in [0,1]$. This follows by expanding the definition of $Q$. 
    
    Notice that $q$ monotonically decreases on the interval $0 \leq \lambda \leq \lambda^*$, and also that $q$ monotonically increases for $\lambda > \lambda^*$. Because $Q(\beta^*) = q(1)$ minimizes $Q$ on the convex set $\mathcal{C}$, $q$ must be descending on $\lambda \in [0,1]$. Thus, $1 \leq \lambda^*$. It follows that $1 - 2 \lambda^* \leq -1$. Plugging in into \Cref{eqn:q-lambda}, we have:
    \[Q(\beta^*) = q(1) \leq c - \frac{1}{2} C\|\beta^* - \beta\|^2.\]
    Applying $Q(\beta_0) = c$ and $\|\beta^* - \beta\| \geq \epsilon$ yields the result.
\end{proof}

\section*{Acknowledgements}
This work is supported in part by the National Science Foundation Grants NSF-SCALE MoDL(2134209) and NSF-CCF-2112665 (TILOS), the U.S. Department Of Energy, Office of Science, and the Facebook Research award.

\bibliography{references}

\end{document}